\newcommand{\sysn}{\left\{\begin{array}{rcl}}
\newcommand{\sysk}{\end{array}\right.}
\renewcommand{\le}{\leqslant}
\renewcommand{\ge}{\geqslant}
\newtheorem{theorem}{Theorem}[section]
\newtheorem{lemma}[theorem]{Lemma}
\theoremstyle{example}
\newtheorem{proposition}[theorem]{Proposition}
\theoremstyle{definition}
\newtheorem{definition}[theorem]{Definition}
\newtheorem{remark}[theorem]{Remark}
\newtheorem{corollary}[theorem]{Corollary}
\journal{...}
\begin{document}

\title{Generalization of the Grothendieck's theorem}
%\title{Generalization of Tychonoff Theorem for function spaces}
\author[affil1]{Mikhail Al'perin}

\address[affil1]{Krasovskii Institute of Mathematics and Mechanics, Yekaterinburg, Russia}

\ead{alper@mail.ru}

\address[affil2]{Ural Federal University, Yekaterinburg, Russia}

\author[affil1,affil2]{Alexander V. Osipov}

%\address[affil3]{Ural State University of Economics, Yekaterinburg, Russia}

\ead{OAB@list.ru}

\begin{abstract} In this paper, we obtain a generalization of Grothendieck's theorem for the space of continuous mappings $C_{\lambda,\mu}(X,Y)$  where $Y$ is a complete uniform space with the uniformity $\mu$ endowed with the topology of uniform convergence on the family $\lambda$ of subsets  of $X$.
A new topological game is defined - the Asanov-Velichko game, which makes it possible to single out a class of topological spaces of the Grothendieck type.

The developed technique is used to generalize the Grothendieck's theorem for the space of continuous mappings endowed with the set-open topology.

\end{abstract}

\tnotetext[label1]{The research of the second author was supported by the Russian Science Foundation (RSF Grant No. 23-21-00195).}

\begin{keyword}
function space  \sep set-open topology \sep topology of uniform
convergence \sep uniform space \sep topological game

\MSC[2010] 54C35 \sep 54E15 \sep 54C25 \sep 91A05

\end{keyword}

\maketitle %%
%% Start line numbering here if you want
%%
% \linenumbers

%% main text

\section{Introduction}

In 1952 Grothendieck \cite{Grot} proved the following result.

\begin{theorem}(Grothendieck) Let $X$ be a compact space and let $Y$ be a metrizable
space. Then each relatively countably compact subspace
 of $C_p(X,Y)$ is relatively compact.
\end{theorem}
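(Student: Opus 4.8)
The plan is to realize $C_p(X,Y)$ as a subspace of the Tychonoff product $Y^{X}$ (fixing a metric $d$ inducing the topology of $Y$) and to reduce the statement to two facts about the pointwise closure $K:=\overline{A}^{\,Y^{X}}$ of the given relatively countably compact set $A$: that $K$ is compact in $Y^{X}$, and that $K\subseteq C(X,Y)$. Granting both, $K$ is a closed subset of $C_p(X,Y)$, so $K=\overline{A}^{\,C_p(X,Y)}$ is compact, which is exactly relative compactness of $A$.

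For compactness of $K$ I would argue fibrewise. For each $x\in X$ the evaluation $\pi_{x}\colon C_p(X,Y)\to Y$ is continuous, and a continuous map sends a cluster point of a sequence to a cluster point of its image; hence any sequence in $A(x):=\{f(x):f\in A\}$, being of the form $(f_{n}(x))$ with $(f_{n})\subseteq A$, has a cluster point in $Y$ obtained from a cluster point of $(f_{n})$ in $C_p(X,Y)$. Thus each $A(x)$ is relatively countably compact in $Y$, and since $Y$ is metrizable this coincides with relative compactness, so $\overline{A(x)}$ is compact. Then $K\subseteq\prod_{x\in X}\overline{A(x)}$ is a closed subset of a product of compact spaces, hence compact by Tychonoff's theorem.

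The heart of the matter is the inclusion $K\subseteq C(X,Y)$, i.e.\ that every pointwise limit point of $A$ is continuous; this is Grothendieck's interchange-of-limits phenomenon. I would argue by contradiction: suppose $g\in K$ is discontinuous at some $x_{0}$, so there is $\varepsilon>0$ such that every neighbourhood of $x_{0}$ contains a point $x$ with $d(g(x),g(x_{0}))\ge\varepsilon$. Using that $g\in\overline{A}^{\,Y^{X}}$, so $g$ can be approximated by members of $A$ on any finite set, together with the continuity of finitely many functions, I would construct by induction sequences $(f_{n})\subseteq A$ and $(x_{n})\subseteq X$ with: (i) $d(f_{n}(x_{j}),g(x_{j}))<1/n$ for all $0\le j<n$; (ii) $d(f_{i}(x_{n}),f_{i}(x_{0}))<1/n$ for all $1\le i\le n$; and (iii) $d(g(x_{n}),g(x_{0}))\ge\varepsilon$. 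Here (i) is achieved by approximating $g$ at the finitely many previously chosen points, while $x_{n}$ in (ii)--(iii) is selected inside the neighbourhood of $x_{0}$ on which $f_{1},\dots,f_{n}$ vary by less than $1/n$, exploiting the discontinuity of $g$.

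Now both compactness hypotheses enter. Since $X$ is compact, $(x_{n})_{n\ge1}$ has a cluster point $x^{*}$; since $A$ is relatively countably compact, $(f_{n})$ has a cluster point $f\in C_p(X,Y)$, hence $f$ is continuous. From (i) the sequence $(f_{n}(x_{m}))_{n}$ converges to $g(x_{m})$ for each fixed $m$, so $f(x_{m})=g(x_{m})$ for all $m\ge0$; from (ii) the sequence $(f_{j}(x_{n}))_{n}$ converges to $f_{j}(x_{0})$ for each fixed $j$, and continuity of $f_{j}$ at the cluster point $x^{*}$ forces $f_{j}(x^{*})=f_{j}(x_{0})$, whence (the diagonal of $Y\times Y$ being closed) $f(x^{*})=f(x_{0})=g(x_{0})$. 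On the other hand, choosing a subnet $x_{n_{\alpha}}\to x^{*}$ and using continuity of $f$ with $f=g$ on $\{x_{n}\}$ and (iii) gives $d(f(x^{*}),g(x_{0}))=\lim_{\alpha}d(g(x_{n_{\alpha}}),g(x_{0}))\ge\varepsilon$, contradicting $f(x^{*})=g(x_{0})$. I expect this final step---organising the two iterated limits through cluster points rather than convergent subsequences---to be the \emph{main obstacle}, precisely because $C_p(X,Y)$ need not be sequential, so relative countable compactness supplies only cluster points; it is the metric on $Y$ and the closedness of the diagonal in $Y\times Y$ that make the comparison of these cluster points legitimate.
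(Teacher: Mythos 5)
Your proof is correct. Note that the paper does not actually prove this statement: it is quoted as Grothendieck's classical theorem from the 1952 article, and the paper's own proofs concern its generalizations, so the only meaningful comparison is with the technique used there. Your argument --- realizing $C_p(X,Y)$ inside $Y^X$, proving the pointwise closure $K=\overline{A}^{\,Y^X}$ compact fibrewise via Tychonoff, and then showing $K\subseteq C(X,Y)$ by the inductive double-limit construction --- is the classical one, and the delicate points are handled correctly: lifting sequences in $A(x)$ to sequences in $A$; the equivalence of relative countable compactness and relative compactness in the metrizable $Y$; deducing $f(x^*)=f(x_0)$ from $f_j(x^*)=f_j(x_0)$ for all $j$ via continuity of the evaluation $h\mapsto(h(x^*),h(x_0))$ and closedness of the diagonal of $Y\times Y$; and working throughout with cluster points and subnets rather than convergent subsequences, as is necessary since $C_p(X,Y)$ need not be sequential. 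Structurally, your proof is exactly the skeleton that the paper generalizes in Proposition \ref{1.5} and Theorem \ref{2.7}: there, compactness of $X$ --- which you use only to produce the cluster point $x^*$ of $(x_n)$ --- is replaced by a winning strategy for player TWO in the Asanov--Velichko game (TWO's sets $S_n$, $T_n$ and the resulting point $x_\infty\in\overline{\bigcup_n S_n}\cap\bigcap_n V_n$ play the role of your $x_n$ and $x^*$), while your step (i), approximating $g\in K$ by members of $A$ on finite sets, is replaced by compactness of the projections $\pi_Z(F)$ for $Z\in\lambda$ (condition $(\gamma)$), since finite sets are no longer the relevant family. What your route buys is an elementary, self-contained proof of the classical case; what the paper's machinery buys is the decoupling of the two compactness hypotheses into game-theoretic and projection hypotheses, which is precisely what permits the extensions to $C_{\lambda,\mu}(X,Y)$ and $C_\lambda(X)$.
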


This theorem has played an important role in topology and
functional analysis.

Grothendieck's theorem has been generalized many times \cite{Arch1,Chr,PS,Pt}. The following successful generalization of this theorem is due to M.O. Asanov and N.V. Velichko \cite{AsVel}.

\begin{theorem} If a space $X$ is countably compact, then each bounded subset of $C_p(X)$ is relatively compact.
\end{theorem}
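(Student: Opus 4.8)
The plan is to realize $C_p(X)$ as a subspace of the product $\mathbb{R}^X$ and to reduce relative compactness to a continuity statement about pointwise limits. First I would extract from the boundedness hypothesis its two working consequences: that $A$ is pointwise bounded (if some sequence $(f_n)\subseteq A$ had $|f_n(x)|\to\infty$ at a point $x$, it could have no cluster point in $C_p(X)$), and that every sequence from $A$ has a cluster point lying \emph{inside} $C_p(X)$, i.e. a continuous cluster function. Pointwise boundedness places $A$ inside a product of compact intervals $\prod_{x\in X}[-M_x,M_x]$, whose closure $K=\overline{A}^{\,\mathbb{R}^X}$ is compact by Tychonoff's theorem. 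Since $C_p(X)$ carries the subspace topology from $\mathbb{R}^X$, the set $A$ is relatively compact in $C_p(X)$ if and only if $K\subseteq C(X)$. Thus everything reduces to proving that every $g\in K$ is continuous.

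To prove $K\subseteq C(X)$ I would argue by contradiction: suppose $g\in K$ is discontinuous at a point $x_0$, so there is $\varepsilon>0$ for which the bad set $B=\{x\in X:|g(x)-g(x_0)|\ge\varepsilon\}$ has $x_0$ in its closure. The heart of the proof is an interleaved inductive construction of a sequence $(f_n)\subseteq A$ together with a sequence $(x_n)\subseteq B$: having chosen $x_1,\dots,x_{n-1}$, I pick $f_n\in A$ with $|f_n(x_i)-g(x_i)|<1/n$ for all $i<n$ (possible since $g\in\overline{A}$), and then, using continuity of $f_1,\dots,f_n$, I choose $x_n\in B$ inside the neighbourhood $V_n=\{x:|f_i(x)-f_i(x_0)|<1/n,\ i\le n\}$ of $x_0$ (possible since $x_0\in\overline{B}$). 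This yields two orthogonal limit relations: $f_n(x_i)\to g(x_i)$ as $n\to\infty$ for each fixed $i$, and $f_n(x_m)\to f_n(x_0)$ as $m\to\infty$ for each fixed $n$, while $|g(x_m)-g(x_0)|\ge\varepsilon$ for every $m$.

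Now both compactness hypotheses enter. Countable compactness of $X$ supplies a cluster point $p$ of $(x_n)$; since each $f_n$ is continuous and $f_n(x_m)\to f_n(x_0)$, the value $f_n(p)$ must coincide with $f_n(x_0)$, whence $f_n(p)=f_n(x_0)\to g(x_0)$. The boundedness of $A$ supplies a cluster point $h\in C_p(X)$ of the sequence $(f_n)$, so $h$ is continuous. Reading off pointwise cluster values gives $h(x_i)=g(x_i)$ for every $i$ (because $(f_n(x_i))_n$ converges to $g(x_i)$) and $h(p)=g(x_0)$ (because $(f_n(p))_n$ converges to $g(x_0)$). Applying continuity of $h$ at the cluster point $p$ then forces $h(p)$ to be a cluster value of $(h(x_m))_m=(g(x_m))_m$, so $|h(p)-g(x_0)|\ge\varepsilon$; this contradicts $h(p)=g(x_0)$, and the contradiction shows $g$ is continuous, so $K\subseteq C(X)$ and $A$ is relatively compact.

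The main obstacle is the step producing the continuous cluster function $h$. Mere pointwise boundedness of $A$ is not enough: the cluster point of $(f_n)$ one automatically obtains in the compact set $K$ may be discontinuous (as $f_n(x)=x^n$ on $[0,1]$ shows, where the pointwise limit escapes $C([0,1])$), and then the final contradiction collapses. It is exactly the boundedness hypothesis on $A$ that upgrades this cluster point to a continuous function, while countable compactness of $X$ simultaneously supplies the cluster point $p$ on the domain side. The delicate point is making the two cluster points interact so that only the genuine continuity of the $f_n$ and of $h$ is invoked, never any (unavailable) continuity of $g$; the interleaving in the construction is arranged precisely so that both one-sided limits are computable at $p$.
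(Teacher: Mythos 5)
Your argument has a genuine gap at its central step: the claim that boundedness of $A$ supplies, for every sequence $(f_n)\subseteq A$, a cluster point $h$ lying in $C_p(X)$. Boundedness means only that every continuous real-valued function on $C_p(X)$ is bounded on $A$; it is strictly weaker than relative countable compactness, and it does not by itself produce any continuous cluster function. You assert that ``boundedness upgrades the cluster point to a continuous function,'' but no mechanism is given, and for countably compact $X$ this assertion is essentially equivalent to the theorem being proved: once every sequence in a bounded set is known to have a continuous cluster point, the rest of your argument is the classical Grothendieck double-limit trick, which derives relative compactness from relative \emph{countable} compactness. So what you have actually written is a correct proof of Grothendieck's theorem over a countably compact domain; the entire content of the Asanov--Velichko strengthening --- the passage from boundedness to continuous cluster points --- is assumed rather than proved. (Even your justification of pointwise boundedness is routed through this same unproved claim, although that part has a trivial direct proof: each evaluation map $f\mapsto f(x)$ is continuous on $C_p(X)$, hence bounded on $A$.)

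The gap can be closed, and the repair shows where boundedness genuinely enters; it is exactly the technique used in the paper's Proposition 2.6 and Theorem 2.7 (and, in game form, Proposition 1.5), which generalize the statement --- the paper itself quotes this theorem from Asanov and Velichko without proof. You never need a cluster point of $(f_n)$ defined on all of $X$; you only need its values on the countable set $Y=\{x_0,p,x_1,x_2,\dots\}$ together with continuity on $X$. Since $Y$ is countable, $\mathbb{R}^{Y}$ is metrizable, say by a metric $d$, and $(\pi_Y(f_n))$ lies in a compact subset of $\mathbb{R}^{Y}$ by pointwise boundedness, so it has a cluster point $h_0\in\mathbb{R}^{Y}$; as before $h_0(x_i)=g(x_i)$ and $h_0(p)=g(x_0)$. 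Now boundedness is used once, and essentially: if no $h\in C(X)$ satisfied $\pi_Y(h)=h_0$, then $\phi(f)=1/d\bigl(\pi_Y(f),h_0\bigr)$ would be a well-defined continuous real-valued function on $C_p(X)$ that is unbounded on $A$, because $\pi_Y(f_n)$ comes arbitrarily close to $h_0$ --- contradicting boundedness of $A$. Taking such an $h\in C(X)$, your final computation (continuity of $h$ at the cluster point $p$ of $(x_m)$, versus $h(x_m)=g(x_m)$ staying $\varepsilon$-far from $g(x_0)=h(p)$) goes through verbatim and yields the contradiction.
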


Let us recall two theorems proved by A.V. Arhangel'skii \cite{arch}.

\begin{theorem}\label{1.8} If a Tychonoff space $X$ contains a
dense $\sigma$-countably pracompact subspace $Y$, then
every pseudocompact subspace $P$ of $C_p(X)$ is an Eberlein compactum.
\end{theorem}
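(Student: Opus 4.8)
The plan is to prove the statement in four stages: first that $P$ is pointwise bounded, hence relatively compact in $\mathbb{R}^{X}$; then that every pointwise cluster point of $P$ is continuous (the Grothendieck core), so that $Q=\overline{P}$ is compact in $C_p(X)$; then that $Q$ is an Eberlein compactum; and finally that $P=Q$. For the first stage, note that pseudocompactness makes $P$ functionally bounded: for each $x\in X$ the evaluation $e_x\colon C_p(X)\to\mathbb{R}$, $e_x(f)=f(x)$, is continuous, so $e_x(P)$ is a pseudocompact, hence bounded, subset of $\mathbb{R}$. Thus $P$ is pointwise bounded and its closure $Q$ taken in $\mathbb{R}^{X}$ is compact by Tychonoff's theorem. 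Since $Y$ is dense, two continuous functions agreeing on $Y$ coincide, so continuity of a candidate limit function can be tested on $Y=\bigcup_{n}Y_n$, where each $Y_n$ is countably pracompact and carries a dense subset $D_n$ that is relatively countably compact in $Y_n$; moreover $\bigcup_n D_n$ is dense in $X$.

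The heart of the argument is the continuity step. I would show that every $g\in Q$ is continuous, so that $Q\subseteq C_p(X)$ and $Q$ is the compact closure of $P$ in $C_p(X)$. Suppose, toward a contradiction, that some $g\in Q$ fails to be continuous at a point of some $Y_n$. Using that $g$ is a pointwise limit of functions from $P$ and that $D_n$ is relatively countably compact in $Y_n$, I would extract a sequence $(f_k)$ from $P$ and a sequence $(y_j)$ from $D_n$ whose cluster point in $Y_n$ witnesses a failure of the interchange of limits,
\[
  \lim_{k}\lim_{j} f_k(y_j)\neq\lim_{j}\lim_{k} f_k(y_j),
\]
contradicting the double-limit condition forced by continuity of the $f_k$ together with the clustering of $(y_j)$ inside $Y_n$. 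Relative countable compactness of $D_n$ is exactly what replaces the countable compactness of the domain in the classical Grothendieck theorem, and density of $Y$ transports continuity on $Y$ to continuity on all of $X$.

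To obtain that $Q$ is an Eberlein compactum I would use the characterization that a compact space is Eberlein if and only if it admits a $T_0$-separating, $\sigma$-point-finite family of cozero sets (equivalently, it embeds into $c_0(\Gamma)$ with the pointwise topology). For each $n$ and each pair of rationals $p<q$ the cozero sets $W_{y,p,q}=\{\,h\in Q:h(y)>q\,\}$, indexed by $y\in D_n$, already form a $T_0$-separating family, since $\bigcup_n D_n$ is dense in $X$; the real work is to thin each such level to a point-finite subfamily. Here I expect the principal obstacle. The point-finiteness must be manufactured from the countable pracompactness of $Y_n$, and the natural mechanism is an Ascoli-type consequence of the compactness of $Q$ restricted to $Y_n$ obtained in the previous stage: for fixed $h$ and $\varepsilon$, only finitely many coordinates from (a suitably organized reindexing of) $D_n$ can record an oscillation exceeding $\varepsilon$, because an infinite such set would, via the relative countable compactness of $D_n$, reproduce the forbidden double-limit failure. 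Assembling these levels over $n$ and over the rationals yields the required $\sigma$-point-finite separating family, and hence the embedding into $c_0(\Gamma)$.

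Finally I would close the gap between $P$ and $Q$. Eberlein compacta are Fr\'echet--Urysohn and angelic, so every point of $Q=\overline{P}$ is the limit of a sequence from $P$; if some $g\in Q\setminus P$ existed, a sequence $(f_k)$ in $P$ with $f_k\to g$ would have $g$ as its only cluster point, making $\{f_k\}$ closed and locally finite in $P$ and thereby producing, together with the functional boundedness of $P$, a contradiction with pseudocompactness. The standard way to package this is to invoke that a pseudocompact subspace dense in an Eberlein (indeed angelic) compactum must exhaust it; some care is needed here in separating the $f_k$ by a locally finite family inside $P$. Hence $P=Q$ and $P$ is an Eberlein compactum. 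I regard the continuity step of the second paragraph and the point-finiteness construction of the third as the two genuine difficulties, the remaining stages being bookkeeping around functionally bounded sets and angelic compacta.
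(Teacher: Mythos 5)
The paper itself does not prove this statement: Theorem \ref{1.8} is quoted from Arhangel'skii \cite{arch}, and the paper's own contribution (Proposition \ref{1.5}, Theorem \ref{1.10}, Lemma \ref{3.3}) is machinery built around it. So your proposal must be judged against the known proof, whose architecture (pointwise closure, continuity of cluster points, Eberlein-ness, Preiss--Simon exhaustion) you have correctly identified. The gaps are in the two steps you yourself flag as the difficulties, and the first one is fatal as sketched. In Stage 2, the statement ``density of $Y$ transports continuity on $Y$ to continuity on all of $X$'' is false: a function continuous on a dense subspace need not be continuous on $X$ (take $g$ the characteristic function of a non-isolated point $x_0\notin Y$; then $g\restriction Y\equiv 0$). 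What is true is that if $g\restriction (Y\cup\{x\})$ is continuous at $x$ for \emph{every} $x\in X$, then $g$ is continuous; hence a discontinuity of $g$ is witnessed by points of $Y$ but sits at a point $x_0$ possibly outside $Y$. That part is repairable. The irreparable part of the sketch is the $\sigma$-union problem: the witnesses of discontinuity lie in $\bigcup_n D_n$, and nothing guarantees that any single $D_n$ witnesses it --- writing $A$ for the set of witnesses, one can have $x_0\in\overline{A}$ while $x_0\notin\overline{A\cap D_n}$ for every $n$. Then your sequence $(y_j)$ must be spread over infinitely many $D_n$'s, no cluster point is available, and the double-limit argument collapses. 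Overcoming exactly this obstruction is the entire content of the paper's game-theoretic machinery (the diagonal sets $P_n$ in the proof of Theorem \ref{2.7}, the game $AV_{\sigma\lambda}$) and of Arhangel'skii's functional-generation technique behind Theorems \ref{3.9} and \ref{3.10}; your sketch simply assumes a single $n$ suffices.

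Stage 3 also does not work as written. The family $\{\,h\in Q: h(y)>q\,\}$, $y\in D_n$, is $T_0$-separating but wildly far from point-finite (consider $h\equiv q+1$), and the proposed thinning mechanism --- ``only finitely many coordinates can record an oscillation exceeding $\varepsilon$'' --- concerns oscillation, not membership in level sets, and no construction is given; manufacturing a $\sigma$-point-finite separating family is the hard Amir--Lindenstrauss/Rosenthal-type combinatorics, which one normally avoids. The standard route: the evaluation map $X\to C_p(Q)$ sends each $Y_n$ to a bounded set; the subalgebra these images generate is dense in $C_p(Q)$ by Stone--Weierstrass and is $\sigma$-bounded; since $Q$ is compact, bounded subsets of $C_p(Q)$ have compact closure, so $C_p(Q)$ acquires a dense $\sigma$-compact subspace, and $Q$ is Eberlein by the characterization that a compactum is Eberlein iff it embeds into $C_p$ of a $\sigma$-compact space. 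Finally, Stage 4 is essentially correct in spirit but should be run with the Preiss--Simon sequences of \emph{open} sets (Proposition \ref{3.2}, exactly as in the paper's Lemma \ref{3.3}), not sequences of points: an infinite closed discrete subset does not contradict pseudocompactness (Mr\'owka's space is the standard example), whereas an infinite locally finite family of nonempty open traces on $P$ does. In summary: right skeleton, but the continuity step needs a genuinely new idea to handle the countable union, and the Eberlein step needs to be replaced by the algebra/$\sigma$-compactness argument rather than a point-finiteness construction.
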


\begin{theorem}\label{1.9} If a Tychonoff space $X$ contains a
dense $\sigma$-pseudocompact subspace $Y$,
then every countably compact subspace $P$ of $C_p(X)$ is an Eberlein compactum.
\end{theorem}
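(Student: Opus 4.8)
The plan is to exhibit $P$ as a compact subset of $C_p(X)$ and then embed it into $C_p(Z)$ for a suitable $\sigma$-compact space $Z$, invoking the well-known characterization of Eberlein compacta as precisely the compact spaces that embed into $C_p(Z)$ for some $\sigma$-compact $Z$. Throughout I would fix a decomposition $Y=\bigcup_{n\in\omega}Y_n$ with each $Y_n$ pseudocompact, and exploit that, by density of $Y$, two continuous functions on $X$ that agree on $Y$ coincide.

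First I would pass to the closure in $\mathbb{R}^X$. Since each evaluation $e_x\colon C_p(X)\to\mathbb{R}$, $e_x(f)=f(x)$, is continuous and $P$ is countably compact, the image $\{f(x):f\in P\}$ is bounded for every $x$; hence $P$ is pointwise bounded and its closure $K:=\overline{P}$ in $\mathbb{R}^X$ is compact. The decisive step is to show $K\subseteq C_p(X)$, i.e. that every cluster point of $P$ in $\mathbb{R}^X$ is continuous. Assuming some $g\in K$ were discontinuous at a point, I would produce, using density of $Y$, sequences $(f_k)\subseteq P$ and $(y_m)\subseteq Y_n$ (for a single $n$, by pigeonhole) along which the two iterated limits of $f_k(y_m)$ disagree, and then derive a contradiction from the pseudocompactness of $Y_n$ -- most plausibly by transferring the array to the Stone--\v{C}ech compactification $\beta Y_n$, on which the bounded functions $f_k|_{Y_n}$ extend continuously and the classical compact-domain Grothendieck double-limit theorem forces the limits to agree. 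This interchange-of-limits argument is the main obstacle: it is the only place where the Grothendieck-type hypothesis on $X$ is genuinely consumed, and arranging for the pseudocompact pieces to deliver the double-limit condition is exactly the delicate point.

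Granting $K\subseteq C_p(X)$, I would build the target space. The evaluation $e\colon X\to C_p(K)$, $x\mapsto e_x$, is continuous, so each $e(Y_n)$ is a pseudocompact subspace of $C_p(K)$. Since $K$ is now compact, the classical Grothendieck theorem applies inside $C_p(K)$ and gives that the closure $S_n:=\overline{e(Y_n)}$ is compact; thus $S:=\bigcup_n S_n$ is $\sigma$-compact. As $Y$ is dense in $X$, the family $\{e_y:y\in Y\}\subseteq S$ separates the points of $K$, so the dual map $\delta\colon K\to C_p(S)$, $\delta(f)(\varphi)=\varphi(f)$, is a continuous injection, hence (as $K$ is compact and $C_p(S)$ is Hausdorff) a topological embedding. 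By the cited characterization, $K$ is an Eberlein compactum.

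Finally I would recover $P$ itself. Eberlein compacta are Fr\'echet--Urysohn, so any point of $K=\overline{P}$ is the limit of a sequence from $P$; that sequence has a cluster point in the countably compact $P$, which must equal its limit, so the limit lies in $P$. Hence $P=K$ is compact and therefore an Eberlein compactum, as required. The companion Theorem~\ref{1.8}, with a pseudocompact $P$ and a $\sigma$-countably pracompact dense subspace, should follow the same scheme, with the pracompactness of the dense subspace supplying the relatively countably compact witnesses that replace the role played above by the countable compactness of $P$ in the final step.
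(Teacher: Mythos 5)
A preliminary remark: the paper contains no proof of Theorem~\ref{1.9} --- it is quoted as background from Arhangel'skii's book \cite{arch}, and the closest analogues in the paper are the diagonal constructions of Proposition~\ref{1.5} and Theorem~\ref{2.7}. So your proposal must stand on its own, and it does not: the step you yourself flag as ``the main obstacle'' is a genuine gap, not a deferred routine verification. Your step 2 --- every cluster point $g$ of $P$ in $\mathbb{R}^X$ is continuous --- is equivalent to the relative compactness of $P$ in $C_p(X)$, i.e.\ it \emph{is} the Grothendieck-type content of the theorem. The mechanism you propose for it, extending the restrictions $f_k|_{Y_n}$ over $\beta Y_n$ and letting ``the classical compact-domain double-limit theorem force the limits to agree,'' does not work: compactness of the domain does not force the interchange of limits. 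On a compact space $L$, the double-limit condition for a pointwise bounded family $A\subseteq C(L)$ is \emph{equivalent} to relative compactness of $A$ in $C_p(L)$ (that is Grothendieck's criterion), so invoking it on $\beta Y_n$ assumes exactly what is to be proved. To derive it from Grothendieck's theorem you would need $\{\widetilde{f_k}\}$ to be relatively countably compact in $C_p(\beta Y_n)$, and countable compactness of $P$ does not give this: a cluster point $f\in P$ of $(f_k)$ controls the values $f_k(y)$ only at points $y\in X$, and says nothing about the values $\widetilde{f_k}(x^*)$ at $x^*\in\beta Y_n\setminus Y_n$, because the Stone extension operator $C^*(Y_n)\to C(\beta Y_n)$ is not continuous for the pointwise topologies. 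Since $Y_n$ is merely pseudocompact (not countably compact), the test sequences $(y_m)$ may cluster only at such points $x^*$, which is precisely where you have no information. This interplay --- countable compactness of $P$ against mere pseudocompactness of the pieces $Y_n$ --- is the actual substance of the theorem (note how Theorem~\ref{1.8} trades a weaker hypothesis on $P$ for a stronger one on $Y$), and the known proofs organize it through different machinery: Arhangel'skii's functional generation, or, in the spirit of this paper, a game-organized diagonal construction plus a condensation/Preiss--Simon step as in Lemma~\ref{3.3}.

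Two smaller defects. In step 3 you invoke ``the classical Grothendieck theorem'' for $e(Y_n)\subseteq C_p(K)$, but $e(Y_n)$ is pseudocompact, not relatively countably compact, so what you need there is the Preiss--Simon theorem \cite{PS} (equivalently, Theorem~\ref{1.8} applied to the compact space $K$ itself). Also, even a repaired double-limit argument needs witnesses of discontinuity \emph{inside} $Y$: a discontinuity of $g$ is witnessed by a set $A\subseteq X$ that need not meet $Y$, and since $g$ is not continuous you cannot simply replace points of $A$ by nearby points of the dense set $Y$; moving the witnesses into some $Y_n$ requires the swap through auxiliary functions of $P$ (pick $f\in P$ agreeing with $g$ on a finite set, use continuity of $f$ to pass into $Y$, then control the error) that Proposition~\ref{1.5} and Theorem~\ref{2.7} carry out via the game. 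By contrast, your steps 1, 4 and 5 are sound: pointwise boundedness of $P$, the embedding of $K$ into $C_p(S)$ for the $\sigma$-compact $S=\bigcup_n S_n$ together with the characterization of Eberlein compacta you cite, and the Fr\'echet--Urysohn argument giving $P=K$, are all correct once $K\subseteq C_p(X)$ and the compactness of each $S_n$ are granted.
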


In this paper, we investigate a generalization of the Grothendieck's theorem for the space of continuous mappings $C_{\lambda,\mu}(X,Y)$  where $Y$ is a complete
uniform space with uniformity $\mu$ endowed with the topology of uniform convergence on the family $\lambda$ of subsets of $X$. Also, the developed technique is used to generalize the Grothendieck's theorem for the space of continuous mappings $C_{\lambda}(X, \mathbb{R})$ endowed with the set-open ($\lambda$-open) topology.

\section{Notation and terminology}
 The set of positive integers is denoted by $\mathbb{N}$ and
$\omega=\mathbb{N}\cup \{0\}$. Let $\mathbb{R}$ be the real line,
we put $\mathbb{I}=[0,1]\subset \mathbb{R}$, and let $\mathbb{Q}$
be the rational numbers. We denote by $\overline{A}$ (or $Cl_X A$)
the closure of $A$ (in $X$).

Let $X$ be a topological space. Let $(Y,\mu)$ be a uniform space and $\lambda\subseteq 2^X$.
A topology on $C(X,Y)$ generated by the
uniformity
$$\nu=\{\langle A,M\rangle \subseteq C(X,Y)\times
C(X,Y):A\in\lambda,M\in\mu\}$$ where
$$\langle A,M\rangle = \{\langle f,g\rangle \in C(X,Y)\times
C(X,Y):\forall x\in A~\langle f(x),g(x)\rangle\in M\}$$ is called the
{\it topology of uniform convergence on elements of $\lambda$} and
denote by $C_{\lambda,\mu}(X,Y)$.

The well-known fact is that if $\lambda$ is the family of all compact
subsets of $X$ or all finite subsets of $X$ then the topology on
$C(X,Y)$ induced by the uniformity $\nu$ of uniform convergence on
elements of $\lambda$ depends only on the topology induced on $Y$
by the uniformity $\mu$ (see \cite{Eng}). In these cases, we
use the notation $C_c(X,Y)$ and $C_p(X,Y)$, respectively. If
$Y=\mathbb{R}$ then $C_c(X)$ and $C_p(X)$, respectively.

In case $(Y,\rho)$ is a metric space and the uniformity $\mu$
is induced by the metric $\rho$, then for $C_{\lambda,\mu}(X,Y)$,
we use the notation $C_{\lambda,\rho}(X,Y)$ and
$C_{\lambda,\rho}(X)$ for the case $Y=\mathbb{R}$.

If $X\in\lambda$, we write $C_\mu(X,Y)$ in place of
$C_{\lambda,\mu}(X,Y)$ and $C_{\mu}(X)$ in place of
$C_{\mu}(X,\mathbb{R})$.

\begin{remark}\label{2.1}
For the topology of uniform convergence on elements of $\lambda$,
we assume that the following natural conditions hold:

(1) if $A\in\lambda$ and $A'\subseteq A$ then $A'\in\lambda$.

(2) if $A_1, A_2\in\lambda$ then $A_1\bigcup A_2\in\lambda$.

(3) if $A\in\lambda$ then $\overline{A}\in\lambda$.

 Note that $C_{\lambda,\mu}(X,Y)$ is Hausdorff if and only if the set $\bigcup \lambda$ is dense in $X$,
 i.e.,
\hbox{$\overline{\bigcup\{A:A\in\lambda\}}=X$} (see
\cite{Bur75b}).

 Then we have an additional condition on $\lambda$.

(4) $\lambda$ is a cover of $X$.

\end{remark}

The {\it set-open topology} on a family $\lambda$ of non-empty
subsets of the set $X$ is a generalization of the compact-open
topology and of the topology of pointwise convergence. This
topology, first introduced by Arens and Dugunji in \cite{AreDug}, is
one of the important topologies on $C(X,Y)$.

If $A\subseteq X$ and $V\subseteq Y$, then $[A,V]$ is defined
as $[A,V]=\{f\in C(X,Y): f(A)\subseteq V\}$.

Let $X$ and $Y$ be topological spaces, $\lambda\subseteq 2^X$ . A
topology on $C(X,Y)$ is called a {\it $\lambda$-open topology}
(set-open topology) if the family $\{[A,V]: A\in \lambda$
and $V$ is open in $Y \}$ form a subbase for the topology. The
function space $C(X,Y)$ with this topology is denoted
by $C_{\lambda}(X,Y)$.

Recall that a subset $A$ of a topological space $X$ is called

$\bullet$ {\it relatively compact} if $A$ has a compact closure in $X$.

$\bullet$ {\it bounded} if every continuous function on $X$ is bounded on $A$.

$\bullet$  {\it relatively countably compact} if each sequence of $A$ has a
limit point in $X$.

$\bullet$  {\it
pseudocompact} if any continuous real-valued function on $A$ is bounded.

\medskip

A topological space $X$ is called  {\it countably
pracompact} if there is a subspace $Y\subseteq X$ which is dense in $X$ and countably compact in $X$ in the following sense: every infinite set $A\subseteq Y$ has a limit point in $X$.

\medskip

A space is called $\sigma$-compact ($\sigma$-countably compact, etc.) if it is the union of a countable set of compact (respectively, countably compact, etc.) subspaces.

\medskip

{\it A $G_\delta$-neighborhood of a point $x$} in a space
$X$ is called a set $V$ containing $x$ which is the
intersection of a countable number of open sets.

For other notation and terminology almost without exceptions we follow the Engelking's book \cite{Eng}.

\section{Asanov-Velichko game and compactness in
$C_p(X,Y)$}

Let $X$ be a topological space, let $\lambda$ be a family of subsets
of $X$ and let $A$ be a non-closed subset of $X$ and $x\in
\overline{A}\setminus A$.

The following game on the space $X$ will be called the {\it Asanov--Velichko game generated by the family $\lambda$}.

Player ONE chooses at the $n$-th step of the game a neighborhood $V_n$ of the point
$x$. Player TWO chooses at the $n$-th step of the game a subset $S_n$ of
the set $A$ which lies in the closure of some $T_n\in
\lambda$. The game is played with a countable number of steps. We say that
the game is won by player TWO if the set $\overline{\bigcup\{S_n:n\in
\mathbb{N}\}}\cap (\bigcap\{V_n: n\in \mathbb{N}\})\ne \emptyset$
and --- by player ONE otherwise.  This game will be denoted by $AV_{\lambda}(X)$.

\medskip

The following definition generalizes the notions of weakly $p$- and
weakly $q$-spaces introduced in \cite{AsVel}.

\begin{definition} A space $X$ is called {\it Asanov-Velichko generated by the family $\lambda$ ($AV_{\lambda}$-space)} if for any non-closed subset $A$ of $X$
there is a point $x\in \overline{A}\setminus A$ for which player TWO
 has a winning strategy in the game $AV_{\lambda}(X)$.
\end{definition}

\medskip

In what follows, we will need two definitions.

\begin{definition} Let $X$ be a topological space.
A family $\lambda$ of subsets of $X$ will be called {\it countably
invariant} if $\lambda$ contains all singletons of $X$ and the fact that $A_n\in \lambda$ for every $n\in
\mathbb{N}$ implies $\bigcup \{A_n:n\in \mathbb{N}\}\in
\lambda$.
\end{definition}

\begin{definition}(\cite{arch}) Property ${\cal P}$ of subsets of a
topological space $X$ is called a {\it continuously invariant property}
if the facts that $A$ is a subset of $X$ with the property ${\cal P}$ and $f:X\rightarrow Y$ is a continuous map imply that $f(A)$ has the property
${\cal P}$ in $f(X)$.
\end{definition}

The following proposition makes it easy to prove generalizations of the Grothendieck's theorem.

\begin{proposition}\label{1.5} Let $X$ be an $AV_{\lambda}$-space where $\lambda$ is
a countably invariant family of subsets of $X$.
Let ${\cal P}$ be a continuously invariant property of
subsets of $X$, and the following condition is satisfied:

$(\gamma )$  for every $Z\in \lambda$ and every $B\subseteq \pi
_Z(C_p(X))$ with the property ${\cal P}$, $B$ is compact.

Then for every $F\subseteq C_p(X)$, having the property ${\cal
P}$, $F$ is relatively compact.
\end{proposition}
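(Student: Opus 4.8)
The plan is to establish relative compactness of $F$ in $C_p(X)$ by the usual two-step reduction: first show that the closure $\overline{F}$ of $F$ in the product space $\mathbb{R}^X$ is compact, and then show that every $g$ in this product closure is in fact continuous, so that $\overline{F}$ (taken in $\mathbb{R}^X$) already lies in $C(X)$ and therefore coincides with, and witnesses the compactness of, the closure of $F$ in $C_p(X)$. For the first step I would use that the countably invariant family $\lambda$ contains all singletons: for $x\in X$ the evaluation $\pi_{\{x\}}\colon C_p(X)\to C_p(\{x\})$ is continuous, so by continuous invariance $\pi_{\{x\}}(F)$ has property ${\cal P}$, and by $(\gamma)$ it is compact, hence bounded. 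Thus $F$ is pointwise bounded and $\overline{F}$ sits in a product of compact intervals, so it is compact in $\mathbb{R}^X$.

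The decisive preliminary fact for the second step is that \emph{for every $Z\in\lambda$ the restriction $g|_Z$ is continuous}. Indeed $\pi_Z\colon C_p(X)\to C_p(Z)$ is continuous, so $\pi_Z(F)$ has property ${\cal P}$ and by $(\gamma)$ is compact, hence closed in $\mathbb{R}^Z$. Since $\pi_Z$ extends to a projection $\mathbb{R}^X\to\mathbb{R}^Z$, we have $g|_Z=\pi_Z(g)\in\overline{\pi_Z(F)}$ (closure in $\mathbb{R}^Z$) $=\pi_Z(F)\subseteq C(Z)$. Because $\lambda$ is countably invariant, the same conclusion holds on every countable union of members of $\lambda$, and, using the conditions on $\lambda$ from Remark \ref{2.1}, on the closures and subsets of such unions as well.

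It remains to upgrade this ``local'' continuity on members of $\lambda$ to global continuity, and here the $AV_{\lambda}$-hypothesis enters. Suppose $g$ is discontinuous; then there is a closed set $C\subseteq\mathbb{R}$ for which $A=g^{-1}(C)$ is not closed. Applying the $AV_{\lambda}$-property to $A$ yields a point $x^*\in\overline{A}\setminus A$ (so $g(x^*)\notin C$, at a positive distance from $C$) at which player TWO has a winning strategy in $AV_{\lambda}(X)$. I would then run the game: at stage $n$, using functions of $F$ that approximate $g$ at $x^*$ together with the continuity of $g$ on the member of $\lambda$ that TWO has produced so far, player ONE plays a shrinking neighbourhood $V_n\subseteq V_{n-1}$ of $x^*$ on which the relevant functions remain within the gap around $g(x^*)$, hence away from $C$. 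Since TWO wins, some point $p$ lies in $\overline{\bigcup_n S_n}\cap\bigcap_n V_n$. On one hand $\{p\}\cup\bigcup_n S_n$ is again a member of $\lambda$ (singletons, subsets of the $\overline{T_n}$, and countable unions all stay in $\lambda$), $g$ is continuous on it, and $\bigcup_n S_n\subseteq A$, so $g(p)\in\overline{g(\textstyle\bigcup_n S_n)}\subseteq C$; on the other hand ONE's design of the $V_n$ forces $g(p)$ to stay near $g(x^*)$, hence $g(p)\notin C$. This contradiction refutes the winning strategy of TWO, contradicting the $AV_{\lambda}$-property, so $g$ must be continuous and the proof is complete.

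The hard part will be the last contradiction, namely making the two estimates on $g(p)$ genuinely meet. The obstacle is that ONE must commit to each neighbourhood $V_n$ before seeing TWO's later moves, whereas the information that pins $g(p)$ near $g(x^*)$ a priori depends on the whole countable family $\{T_n\}$ that TWO eventually builds, and the continuity of $g$ at the emergent point $p$ is not available in advance. I expect to overcome this exactly through countable invariance --- the full union $\bigcup_n T_n$ and its closure remain in $\lambda$, so $g$ is continuous on the ``limit'' set --- combined with the monotonicity of the $V_n$ and an interchange-of-limits argument in the spirit of Grothendieck, run along the points supplied by TWO's strategy and the approximating functions drawn from $F$. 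Checking that this double limit is legitimate, i.e.\ that the needed subnets converge by virtue of the compactness furnished by $(\gamma)$, is the technical heart of the argument.
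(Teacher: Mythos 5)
Your first two steps are sound and coincide with the paper's: pointwise compactness of the product closure via singletons, and the observation that the limit function $g$ agrees with some member of $F$ on every $Z\in\lambda$ (since $\pi_Z(F)$ is compact, hence closed in $\mathbb{R}^Z$). The game set-up, with ONE playing $V_n=f_n^{-1}(B(g(x^*),1/n))\cap V_{n-1}$ for functions $f_n\in F$ agreeing with $g$ on the previously built sets plus $x^*$, is also exactly the paper's. But the endgame --- which you yourself flag as ``the technical heart'' and leave unverified --- is a genuine gap, and your sketch of it fails on two counts. First, the claim that $\{p\}\cup\bigcup_n S_n\in\lambda$ rests on ``subsets of the $\overline{T_n}$ \dots stay in $\lambda$,'' which is not available: countable invariance gives only singletons and countable unions, not closure under subsets or under topological closure, and TWO's moves satisfy only $S_n\subseteq\overline{T_n}$; so neither $S_n$ nor $\overline{T_n}$ need belong to $\lambda$, and the continuity of $g$ on $\{p\}\cup\bigcup_n S_n$ that drives your estimate $g(p)\in C$ is not justified. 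Second, and more fundamentally, both halves of your intended contradiction concern $g(p)$, but $p\in\bigcap_n V_n$ constrains only the values $f_n(p)$ of the auxiliary functions: each $f_n$ was chosen to agree with $g$ only on $\bigcup_{i<n}T_i\cup\{x^*\}$, and $p$ typically lies outside $\bigcup_n T_n$, so nothing ties $g(p)$ to the limit $g(x^*)$ of the sequence $f_n(p)$. The element $g$ of the product closure is genuinely unconstrained at $p$, and no interchange-of-limits argument can pin it down; this is not a technical difficulty to be checked but the place where the argument must be restructured.

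The paper's proof resolves precisely this by never estimating $g$ at $p$. After the game it forms $Y=\bigcup_n T_n\cup\{x^*\}\cup\{p\}$, which \emph{is} in $\lambda$ (a countable union of members and singletons), so $\pi_Y(F)$ is compact by $(\gamma)$, and it chooses $f_\infty\in F$ whose restriction to $Y$ is a limit (cluster) point of the sequence $(f_n|_Y)$. Then $f_\infty(p)=g(x^*)$, because $f_n(p)\to g(x^*)$, while $f_\infty=g$ on $\bigcup_n T_n$, because the $f_n$ eventually equal $g$ at each such point; the contradiction is then with the continuity of the \emph{genuine} continuous function $f_\infty$ at $p$, obtained by showing $p\in\overline{T}$ for $T=\{x\in\bigcup_n T_n: |g(x)-g(x^*)|\ge\frac{3}{4}\epsilon\}$. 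Even that last step needs one more device your sketch lacks: a point $x'\in S_n$ close to $p$ lies only in $\overline{T_n}$, where nothing controls $g$, so the paper takes yet another $f'\in F$ agreeing with $g$ on $T_n\cup\{x'\}$ (again by $(\gamma)$ and closedness) and uses continuity of $f'$ at $x'$ to replace $x'$ by a point $x''\in T_n$ near $p$ with $|g(x'')-g(x^*)|\ge\frac{3}{4}\epsilon$. In short, you would need to rebuild your final contradiction around cluster points realized inside $F$, rather than around values of the discontinuous limit $g$.
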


\begin{proof} Note that to prove the proposition
it is enough to prove that
$\overline{F}^{C_p(X)}=\overline{F}^{R^{X}}$. Indeed, for
any $x\in X$ the set $\pi_x(F)$ is compact and, therefore,
$\overline{F}^{R^{X}}\subseteq \prod\{\pi_x(F):x\in X\}$ is
compact.

Suppose $\overline{F}^{C_p(X)}\neq
\overline{F}^{\mathbb{R}^{X}}$, that is, there is a discontinuous function
$f\in \overline{F}^{\mathbb{R}^{X}}$. Then there is
a closed subset $M$ of $\mathbb{R}$ such that $C=f^{-1}(M)$
 is not closed in $X$. Since $X$ is an $AV_{\lambda}$-space, there is $x_0\in \overline{C}\setminus C$,
for which player TWO has a winning strategy in the game $AV_{\lambda}(X)$. Let $\epsilon =\rho
(f(x_0),M)$, then $\epsilon >0.$

The facts that $\lambda$ is countably invariant and the condition $(\gamma)$
imply that $\pi_{x_0}(F)$ is compact. Therefore, there is
$f_1\in F$ such that $\pi_{x_0}(f)=\pi_{x_0}(f_1)$, i.e.,
$f_1(x_0)=f(x_0)$. Let $V_1=f^{-1}_1(\{r\in \mathbb{R}:\rho
(f(x_0),r)<1\})$. Let $S_1\subseteq C$ and $T_1\in \lambda$ be the
sets selected by the player TWO according to the winning
strategy in the game $AV_{\lambda}(X)$.

Suppose that the functions $f_1,\ldots ,f_n$,
the neighborhoods $V_1,\ldots ,V_n$ of $x_0$, the subsets $S_1,\ldots
,S_n$ of $C$,
  and the elements $T_1,\ldots ,T_n$ of $\lambda$,
such that $S_i\subseteq \overline{T_i}$ for every $i=1,\ldots,n$ are constructed.
By condition $(\gamma)$, the projection of the set $F$ onto
$L=\bigcup\{T_i:i=1,\ldots ,n\}\cup \{x_0\}$ is compact,
hence, there is $f_{n+1}\in F$ such that
$\pi_L(f)=\pi_L(f_{n+1})$. Let $V_{n+1}=f^{-1}_{n+1}(\{r\in
\mathbb{R}:\rho(f(x_0),r)<\frac{1}{n+1}\})\cap V_n$. Let
$S_{n+1}\subseteq C$ and $T_{n+1}\in \lambda$ be the sets chosen by
player TWO.

Regarding the constructed families $\{f_i:i\in \mathbb{N}\}$, $\{V_i:i\in
\mathbb{N}\}$, $\{S_i:i\in \mathbb{N}\}$ and $\{T_i:i\in
\mathbb{N}\}$, since $X$ is an $AV_{\lambda}$-space, there is $x_{\infty}\in \overline{\bigcup
\{S_n:n\in \mathbb{N}\}}\cap (\bigcap \{V_n:n\in \mathbb{N}\})$.

Consider $Y=\bigcup \{T_i:i\in \mathbb{N}\}\cup
\{x_{\infty}\}\cup\{x_0\}$. By condition $(\gamma)$, $\pi_Y(F)$
is compact, that is, there exists $f_{\infty}\in F$ such that
$\pi_Y(f)=\pi _Y(f_{\infty})$. Then the following conditions are satisfied:

(1) $f_{\infty}(x_{\infty})=f(x_0)$;

(2) $f_{\infty}(x)=f(x)$ for every $x\in \bigcup \{T_i:i\in
\mathbb{N}\}$ because $\pi_Y(f)=\pi_Y(f_{\infty})$ and
$\pi_Y(f_{\infty})$ is a limit function for the set
$\pi_Y(\{f_i:i\in \mathbb{N}\})$.

Let $T=(\bigcup\{T_i:i\in \mathbb{N}\})\setminus \{x\in X:
\rho(f(x),f(x_{\infty}))<\frac{3}{4}\epsilon\}$.

 By conditions (1) and
(2), $\rho (f_{\infty}(x),f_{\infty}(x))\geq
\frac{3}{4}\epsilon$ for every $x\in T$. If we prove that
$x_{\infty}\in \overline{T}$, then we obtain a contradiction with the fact that
$f_{\infty}$ is continuous.

Indeed, let $O(x_{\infty})$ be an arbitrary neighborhood of the
point $x_{\infty}$, then there is $x'$ and $n\in \mathbb{N}$
such that $x'\in S_n\cap O(x_{\infty})$. Let $H=T_n\cup
\{x'\}$. By condition $(\gamma )$, $\pi_H(F)$ is compact, hence, there exists
$f'\in F$ such that $\pi_H(f)=\pi_H(f')$. Since $f'$ is continuous, there is a point $x^{''}\in T_n$ for which
$\rho(f'(x^{''}),f'(x'))<\frac{1}{4}\epsilon$, but then

$\rho (f(x^{''}),f(x_0))\geq\rho(f(x'),f(x_0))-\rho
(f'(x^{''}),f'(x'))\geq\epsilon -\frac{1}{4}\epsilon
=\frac{3}{4}\epsilon$, i.e., $x^{''}\in T\cap O(x_{\infty})$.

\end{proof}

Recall that a compactum $F$ is called an {\it Eberlein compactum} if there is a compactum $X$ such that $F$ is homeomorphic to a subspace of $C_p(X)$.

The following theorem is an obvious consequence of Theorem \ref{1.8} and
Proposition \ref{1.5}.

\begin{theorem}\label{1.10} Let $X$ be a Tychonoff  $AV_{\lambda_c}$-space where $\lambda_c$ is a family of all of its subspaces that have a dense $\sigma$-countably pracompact subspace. Then every pseudocompact subspace in $C_p(X)$
is relatively compact.
\end{theorem}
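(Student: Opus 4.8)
The plan is to derive Theorem~\ref{1.10} as a direct application of Proposition~\ref{1.5}, taking the continuously invariant property $\mathcal{P}$ to be ``pseudocompactness'' and $\lambda=\lambda_c$ the family of subspaces of $X$ admitting a dense $\sigma$-countably pracompact subspace. The structure of the argument is therefore to verify the three hypotheses of the Proposition for this choice, after which relative compactness of any pseudocompact $F\subseteq C_p(X)$ follows immediately, and then to upgrade ``relatively compact'' to the stronger Eberlein conclusion if that is what is intended. I note at the outset that the theorem as stated only claims relative compactness, so the core task is the hypothesis-checking for Proposition~\ref{1.5}.

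\medskip
First I would confirm that pseudocompactness is a continuously invariant property of subsets: if $A\subseteq X$ is pseudocompact and $f\colon X\to Z$ is continuous, then every continuous real-valued function on $f(A)$ pulls back to a continuous function on $A$, hence is bounded, so $f(A)$ is pseudocompact in $f(X)$. This is routine. Second, I would check that $\lambda_c$ is countably invariant in the sense of the paper's definition: it must contain all singletons (a singleton is trivially its own dense $\sigma$-countably pracompact subspace) and be closed under countable unions. For the latter, if each $A_n$ has a dense $\sigma$-countably pracompact subspace $D_n=\bigcup_k D_{n,k}$, then $\bigcup_n D_n$ is dense in $\bigcup_n A_n$ and is a countable union of countably compact subspaces, hence $\sigma$-countably pracompact; so $\bigcup_n A_n\in\lambda_c$. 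The paper's Remark~\ref{2.1} conditions on $\lambda$ would also need a glance, but countable invariance is the operative requirement for the Proposition.

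\medskip
The substantive step is verifying condition $(\gamma)$: for every $Z\in\lambda_c$ and every $B\subseteq\pi_Z(C_p(X))$ that is pseudocompact, $B$ must be compact. Here I would observe that $\pi_Z(C_p(X))\subseteq C_p(Z)$ (restriction of continuous functions on $X$ to $Z$ lands in $C_p(Z)$), and $Z$ is precisely a Tychonoff space containing a dense $\sigma$-countably pracompact subspace. I would then invoke Theorem~\ref{1.8}: it states that for such $Z$, every pseudocompact subspace of $C_p(Z)$ is an Eberlein compactum, in particular compact. Since a compact subspace of $C_p(Z)$ intersected with $\pi_Z(C_p(X))$ remains compact, $(\gamma)$ follows. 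The one point requiring care is whether pseudocompactness of $B$ as a subspace of $\pi_Z(C_p(X))$ matches the pseudocompactness hypothesis of Theorem~\ref{1.8} inside the ambient $C_p(Z)$; because pseudocompactness is intrinsic (it depends only on $B$ as a space, via boundedness of continuous real functions on it), this transfer is automatic.

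\medskip
With all three hypotheses in hand, Proposition~\ref{1.5} yields that every pseudocompact $F\subseteq C_p(X)$ is relatively compact, which is the stated conclusion. The main obstacle I anticipate is not any single hard inequality but rather the bookkeeping around condition $(\gamma)$: one must be careful that ``$B$ is pseudocompact as a subspace'' is the right hypothesis to feed into Theorem~\ref{1.8}, and that the dense $\sigma$-countably pracompact structure on $Z$ is genuinely available for every $Z\in\lambda_c$ by the very definition of $\lambda_c$. These are the places where a gap could hide, so the proof should make the identification $\pi_Z(C_p(X))\subseteq C_p(Z)$ and the appeal to Theorem~\ref{1.8} fully explicit.
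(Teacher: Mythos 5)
Your proposal is correct and takes exactly the paper's route: the paper derives Theorem~\ref{1.10} as an immediate consequence of Proposition~\ref{1.5} (with $\mathcal{P}$ taken to be pseudocompactness and $\lambda=\lambda_c$) together with Theorem~\ref{1.8}, which serves precisely to verify condition $(\gamma)$, just as you do. One terminological nit: in your countable-invariance check, $\bigcup_n D_n$ is a countable union of countably \emph{pracompact} (not countably compact) subspaces, which is still $\sigma$-countably pracompact by definition, so the argument stands as written.
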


The following result also follows from Theorem \ref{1.9} and Proposition \ref{1.5} .

\begin{theorem}\label{1.11} Let $X$ be a Tychonoff $AV_{\lambda_p}$-space where $\lambda_p$ is a family of all of its subspaces with a dense $\sigma$-pseudocompact subspace. Then each countably pracompact subspace in $C_p(X)$ is relatively compact.
\end{theorem}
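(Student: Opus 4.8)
The plan is to deduce the statement from Proposition~\ref{1.5}, taking $\lambda=\lambda_p$ and letting ${\cal P}$ be the property ``to be countably pracompact''. With these choices the conclusion of Proposition~\ref{1.5} reads exactly that every countably pracompact $F\subseteq C_p(X)$ is relatively compact, so it suffices to verify the three hypotheses of the proposition: that $\lambda_p$ is countably invariant, that countable pracompactness is a continuously invariant property, and that condition $(\gamma)$ holds.

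The first two checks are routine. For countable invariance of $\lambda_p$, a singleton is its own dense $\sigma$-pseudocompact subspace, and if $A_n\in\lambda_p$ with dense $\sigma$-pseudocompact witness $Y_n\subseteq A_n$, then $\bigcup_n Y_n$ is $\sigma$-pseudocompact and dense in $\bigcup_n A_n$, so $\bigcup_n A_n\in\lambda_p$. For continuous invariance, if $A$ is countably pracompact with dense witness $D$ and $f$ is continuous, then $f(D)$ is dense in $f(A)$; given an infinite $C\subseteq f(D)$, choosing one preimage in $D$ for each point of $C$ yields an infinite subset of $D$, whose limit point $p\in A$ maps to a limit point $f(p)\in f(A)$ of $C$. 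Hence $f(A)$ is countably pracompact in $f(X)$.

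The crux is condition $(\gamma)$: for $Z\in\lambda_p$ and a countably pracompact $B\subseteq\pi_Z(C_p(X))\subseteq C_p(Z)$, one must show that $B$ is compact. Let $D$ be a dense witness for the countable pracompactness of $B$, so that $D$ is relatively countably compact in $C_p(Z)$, and set $P=\overline{D}^{C_p(Z)}$. I would argue in three moves. First, show that $P$ is countably compact. Second, apply Theorem~\ref{1.9} (legitimate because $Z$ carries a dense $\sigma$-pseudocompact subspace): $P$ is then an Eberlein compactum, hence compact and, being Eberlein, Fr\'echet--Urysohn. Third, deduce $B=P$. For the last move, take $g\in P=\overline{D}$; by the Fr\'echet--Urysohn property there is a sequence $(d_n)$ in $D$ with $d_n\to g$. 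We may assume the $d_n$ are pairwise distinct, for otherwise $g\in D\subseteq B$. Then $\{d_n\}$ is an infinite subset of $D$, so by countable pracompactness of $B$ it has a limit point in $B$; since $P$ is Hausdorff and $d_n\to g$, that limit point is $g$, whence $g\in B$. Thus $P\subseteq B\subseteq P$, so $B=P$ is compact, and Proposition~\ref{1.5} applies.

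The main obstacle is the first move, the countable compactness of $P$ -- equivalently, in view of Theorem~\ref{1.9}, the relative compactness of the witness $D$ in $C_p(Z)$. This is precisely the Grothendieck phenomenon, and it genuinely requires the hypothesis $Z\in\lambda_p$: for an arbitrary ambient space it fails, as the Mr\'owka--Isbell space $\Psi$ shows, where the dense set of isolated points is relatively countably compact while $\Psi$ itself is not countably compact. The task is therefore to use $Z\in\lambda_p$, through Theorem~\ref{1.9}, to forbid an infinite closed discrete subset in $P$ -- in effect to show that for such $Z$ the space $C_p(Z)$ is angelic, so that relative countable compactness of $D$ upgrades to relative compactness. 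I expect this to be the technical heart of the matter; once $P$ is known to be countably compact, Theorem~\ref{1.9} together with the Fr\'echet--Urysohn argument above closes the gap, and Proposition~\ref{1.5} then yields the theorem.
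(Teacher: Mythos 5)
Your skeleton is exactly the paper's: the paper proves Theorem~\ref{1.11} in a single sentence, as a consequence of Proposition~\ref{1.5} (with $\lambda=\lambda_p$ and ${\cal P}=$ countable pracompactness) and Arhangel'skii's Theorem~\ref{1.9}, and your two routine verifications --- that $\lambda_p$ is countably invariant and that countable pracompactness is continuously invariant --- are correct and are indeed part of what that one-line derivation tacitly assumes (in the second verification you implicitly use that the target is $T_1$, so that a limit point of a set has every neighborhood meeting the set in infinitely many points; harmless here, all spaces being Tychonoff). Your reduction of condition $(\gamma)$ is also sound as far as it goes: with $D$ the dense witness in $B\subseteq\pi_Z(C_p(X))$ and $P=\overline{D}^{C_p(Z)}$, \emph{if} $P$ is countably compact, then Theorem~\ref{1.9} makes $P$ an Eberlein compactum, hence Fr\'echet--Urysohn (Proposition~\ref{3.2}), and your sequence argument correctly yields $B=P$.

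But, as you say yourself, you have not proved the first move --- that $P$ is countably compact --- and this is a genuine gap, not a deferrable technicality: it is precisely the Grothendieck-type content of the statement, and it cannot be extracted from Theorem~\ref{1.9} by formal reasoning about the ambient space alone. Your own example makes this sharp: in the Mr\'owka space $\Psi$ \emph{every} countably compact subspace is compact (even metrizable), yet $\Psi$ is countably pracompact and non-compact; so ``countably compact subspaces of the ambient space are (Eberlein) compacta'' plus ``$B$ is countably pracompact'' does not imply ``$B$ is compact'' without using the specific structure of $C_p(Z)$. One way to close the gap using only results already quoted in the paper: $B$ countably pracompact implies $B$ pseudocompact; since $Z\in\lambda_p$, the space $Z$ has a dense $\sigma$-pseudocompact subspace, hence $Z$ is trivially functionally generated by the family of Theorem~\ref{3.10} (it contains $Z$ itself), so Theorem~\ref{3.10} (= Theorem 8.3 in \cite{Arch2}) gives that $\overline{B}^{C_p(Z)}$ is compact; this compactum is countably compact, so by Theorem~\ref{1.9} it is an Eberlein compactum; finally, a pseudocompact subspace of an Eberlein compactum is closed (Theorem IV.5.4 in \cite{arch}, the fact invoked in the paper's Lemma~\ref{3.3}, resting on Proposition~\ref{3.2}), whence $B=\overline{B}$ is compact and $(\gamma)$ holds. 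Equivalently one could prove angelicity of $C_p(Z)$ for such $Z$, as you suggest, but some argument of this kind must actually be supplied: your write-up stops exactly at the point where the paper's one-sentence citation conceals the heavy lifting.
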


There is an interesting similarity between Theorems \ref{1.10}  and \ref{1.11} and the results (Theorems \ref{3.9} and \ref{3.10})
Archangel'skii in (\cite{arch} and \cite{Arch2}).

\begin{definition} Let $\lambda$ be a family
of subsets of a topological space $X$. The space $X$
is {\it functionally generated by the family $\lambda$} if the following condition holds: for every
discontinuous function $f:X\rightarrow \mathbb{R}$ there is
$A\in \lambda$ such that the function $\pi_A(f)$ cannot be extended to a real-valued continuous function on all of $X$.
\end{definition}

\begin{proposition} (Proposition 4.10 in \cite{arch}) Let a Tychonoff space $X$ be functionally generated by a family $\lambda$ of its subsets, let ${\cal P}$ be a continuously invariant property, and let the following condition hold:

$(\alpha)$ if $Y\in \lambda$, then for every $B\subseteq
\pi_Y(C_p(X))$ with the property ${\cal P}$, the closure $B$ in
$\pi_Y(C_p(X))$ is compact.

Then any subset of $C_p(X)$ with the property ${\cal
P}$, is relatively compact.
\end{proposition}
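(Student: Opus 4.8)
The plan is to mimic the proof of Proposition \ref{1.5}: I reduce the relative compactness of $F$ in $C_p(X)$ to the two statements that $\overline{F}^{\mathbb{R}^X}\subseteq C_p(X)$ and that $\overline{F}^{\mathbb{R}^X}$ is compact. Since $C_p(X)$ carries the subspace topology from $\mathbb{R}^X$, we have $\overline{F}^{C_p(X)}=\overline{F}^{\mathbb{R}^X}\cap C_p(X)$, so the first statement gives $\overline{F}^{C_p(X)}=\overline{F}^{\mathbb{R}^X}$ and the second then says exactly that this common closure is compact, i.e. that $F$ is relatively compact.

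The inclusion $\overline{F}^{\mathbb{R}^X}\subseteq C_p(X)$ is the heart of the argument, and it is here that functional generation enters. Suppose for contradiction that some $f\in\overline{F}^{\mathbb{R}^X}$ is discontinuous. Then there is $A\in\lambda$ for which $\pi_A(f)=f|_A$ has no continuous extension to $X$, i.e. $\pi_A(f)\notin\pi_A(C_p(X))$. I would contradict this by proving $\pi_A(f)\in\pi_A(C_p(X))$. Continuity of the restriction map $\pi_A\colon\mathbb{R}^X\to\mathbb{R}^A$ gives $\pi_A(f)\in\pi_A(\overline{F}^{\mathbb{R}^X})\subseteq\overline{\pi_A(F)}^{\mathbb{R}^A}$. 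As ${\cal P}$ is continuously invariant, $\pi_A(F)$ has property ${\cal P}$ in $\pi_A(C_p(X))$, so by $(\alpha)$ the closure $K$ of $\pi_A(F)$ in $\pi_A(C_p(X))$ is compact. The technical point is then that $K=\overline{\pi_A(F)}^{\mathbb{R}^A}$: since $\pi_A(C_p(X))$ has the subspace topology from $\mathbb{R}^A$, the compact set $K$ is closed in $\mathbb{R}^A$ and contains $\pi_A(F)$, hence contains $\overline{\pi_A(F)}^{\mathbb{R}^A}$, the reverse inclusion being automatic. Therefore $\pi_A(f)\in K\subseteq\pi_A(C_p(X))$, so $f|_A$ is the restriction of a continuous function on $X$, contradicting the choice of $A$.

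For compactness of $\overline{F}^{\mathbb{R}^X}$ I would argue that this set is closed in $\mathbb{R}^X$ and contained in $\prod_{x\in X}\overline{\pi_x(F)}$, so by Tychonoff's theorem it is enough to know that each $\pi_x(F)$ is bounded in $\mathbb{R}$; then $\overline{F}^{\mathbb{R}^X}$ is a closed subset of a compact product. Fixing $x$ and choosing $A\in\lambda$ with $x\in A$, the same application of $(\alpha)$ as above shows $\overline{\pi_A(F)}^{\mathbb{R}^A}$ is compact, and projecting to the $x$-th coordinate makes $\pi_x(F)$ relatively compact in $\mathbb{R}$.

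I expect this last step to be the only real obstacle, since it tacitly requires that $\lambda$ cover $X$. One checks that functional generation already forces $\bigcup\lambda$ to be dense in $X$, and the covering hypothesis is automatic whenever $\lambda$ contains all singletons (as in Proposition \ref{1.5}); the delicate part of a careful write-up is to pin down where covering is genuinely needed and, if one refuses to assume it, to bound $\pi_x(F)$ at the points of $\overline{\bigcup\lambda}\setminus\bigcup\lambda$. The continuity step is otherwise entirely formal once the subspace-closure identity $K=\overline{\pi_A(F)}^{\mathbb{R}^A}$ is in hand.
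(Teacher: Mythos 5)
First, a point of orientation: the paper itself gives no proof of this proposition; it is quoted from Arhangel'skii \cite{arch}, and the only related argument in the paper is the proof of Proposition \ref{1.5}, which you explicitly set out to mimic. Your central step is correct and complete: given a discontinuous $f\in\overline{F}^{\mathbb{R}^X}$, functional generation supplies $A\in\lambda$ with $\pi_A(f)\notin\pi_A(C_p(X))$; continuous invariance gives $\pi_A(F)$ the property $\mathcal{P}$ in $\pi_A(C_p(X))$; condition $(\alpha)$ makes its closure $K$ there compact; and since a compact set is closed in the Hausdorff space $\mathbb{R}^A$, indeed $K=\overline{\pi_A(F)}^{\mathbb{R}^A}\ni\pi_A(f)$, contradicting $K\subseteq\pi_A(C_p(X))$. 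This is the right use of the hypotheses, and it is genuinely simpler than the paper's own Proposition \ref{1.5}: functional generation replaces the entire inductive game construction there by a single application of $(\alpha)$.

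The gap is exactly where you suspect, but your proposed patch is wrong. Functional generation does \emph{not} force $\bigcup\lambda$ to be dense in $X$: take $X=\{0\}\cup\{1/n:n\in\mathbb{N}\}\cup\{2\}\subset\mathbb{R}$ and let $\lambda$ consist of all subsets of $K=\{0\}\cup\{1/n:n\in\mathbb{N}\}$. Every point of $X$ other than $0$ is isolated, so every discontinuous function on $X$ is discontinuous at $0$; its restriction to the clopen set $K\in\lambda$ is then discontinuous on $K$ and hence admits no continuous extension to $X$. Thus $X$ is functionally generated by $\lambda$, yet $\overline{\bigcup\lambda}=K\neq X$. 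Moreover, density alone would not rescue your step (b): a family can be pointwise bounded on a dense set and pointwise unbounded at a missing point (spike functions $f_n$ on $[0,1]$ with $f_n(x_0)=n$ and $f_n\to 0$ elsewhere), so the Tychonoff-product argument really needs every $x\in X$ to lie in some member of $\lambda$. Finally, without a covering-type hypothesis the statement as literally transcribed is false, so no argument can close the gap: an infinite discrete space $X$ has no discontinuous functions, hence is functionally generated (vacuously) by $\lambda=\{\emptyset\}$; condition $(\alpha)$ is trivial; the property \emph{being countable} is continuously invariant; yet the countable set of integer constant functions is closed and discrete, hence not relatively compact, in $C_p(X)=\mathbb{R}^X$. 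The covering assumption --- built into the paper's Proposition \ref{1.5} via ``countably invariant'' (which demands all singletons), and part of Arhangel'skii's setting --- must therefore be added to the hypotheses rather than derived; once it is, your proof is complete.
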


Although the condition $(\alpha)$ is somewhat different from the condition $(\gamma)$, similarity between Theorems \ref{1.10} and \ref{1.11} and two of the following theorems is striking.

\begin{theorem}(Theorem 8.1 in \cite{Arch2})\label{3.9}. If a Tychonoff space $X$ is functionally generated by a family $\lambda_c$
 of all of its closed subspaces that have a dense $\sigma$-countably pracompact
subspace, then the closure in $C_p(X)$ of every pseudocompact subspace is
compact.
\end{theorem}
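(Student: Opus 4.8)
The plan is to derive Theorem \ref{3.9} directly from the stated Proposition 4.10 of \cite{arch}, taking ${\cal P}$ to be the property of being pseudocompact. First I would verify that pseudocompactness is a continuously invariant property: if $A\subseteq X$ is pseudocompact and $f:X\rightarrow Z$ is continuous, then any continuous real-valued function $g$ on $f(A)$ pulls back along $f$ to a continuous real-valued function on $A$, which is bounded since $A$ is pseudocompact; hence $g$ is bounded on $f(A)$, so $f(A)$ is pseudocompact in $f(X)$. This is routine but must be recorded, since Proposition 4.10 requires it.

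With ${\cal P}$ fixed, the substance of the argument is the verification of condition $(\alpha)$ for the family $\lambda_c$. Fix $Y\in\lambda_c$, so that $Y$ is a closed subspace of $X$ admitting a dense $\sigma$-countably pracompact subspace, and let $B\subseteq\pi_Y(C_p(X))$ be pseudocompact. Here $\pi_Y$ is the restriction map $C_p(X)\rightarrow C_p(Y)$, so $\pi_Y(C_p(X))$ is a subspace of $C_p(Y)$ and $B$ is in particular a pseudocompact subspace of $C_p(Y)$. Applying Theorem \ref{1.8} with $Y$ in the role of the base space, every pseudocompact subspace of $C_p(Y)$ is an Eberlein compactum, hence compact. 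Thus $B$ is compact; since $Y$, being a closed subspace of the Tychonoff space $X$, is itself Tychonoff, $C_p(Y)$ is Hausdorff, so $B$ is closed in $\pi_Y(C_p(X))$ and its closure there coincides with $B$ and is compact. This establishes $(\alpha)$.

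Having verified both that ${\cal P}$ is continuously invariant and that $(\alpha)$ holds for $\lambda_c$, and using the hypothesis that $X$ is functionally generated by $\lambda_c$, Proposition 4.10 applies and yields that every pseudocompact subset of $C_p(X)$ is relatively compact, i.e. its closure in $C_p(X)$ is compact, which is precisely the assertion of the theorem.

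The step I expect to require the most care is the verification of $(\alpha)$, and specifically the passage from a pseudocompact $B\subseteq\pi_Y(C_p(X))$ to the applicability of Theorem \ref{1.8}. One must be attentive that the theorem is invoked for the base space $Y$ rather than $X$, and that $B$, although a subspace of the image $\pi_Y(C_p(X))$ rather than of all of $C_p(Y)$, is nonetheless a genuine pseudocompact subspace of $C_p(Y)$ to which Theorem \ref{1.8} applies. Once this is in place, the remaining reductions are formal.
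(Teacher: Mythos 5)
Your proof is correct, but note first that the paper itself contains no proof of Theorem \ref{3.9}: it is quoted verbatim (as Theorem 8.1 of \cite{Arch2}) precisely to exhibit the parallel with the paper's own Theorems \ref{1.10} and \ref{1.11}, so there is no in-paper argument to compare against. Your derivation --- take ${\cal P}$ to be pseudocompactness, check continuous invariance, verify condition $(\alpha)$ for $\lambda_c$ by applying Theorem \ref{1.8} with $Y\in\lambda_c$ in the role of the base space, then invoke Proposition 4.10 of \cite{arch} --- is exactly the functional-generation analogue of how the paper obtains its Theorem \ref{1.10}, namely as ``an obvious consequence of Theorem \ref{1.8} and Proposition \ref{1.5}''; the only difference is that you use functional generation and condition $(\alpha)$ where the paper uses the Asanov--Velichko game and condition $(\gamma)$. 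The two delicate points are both handled correctly in your write-up: (i) pseudocompactness is an intrinsic property of the subspace $B$, so $B\subseteq\pi_Y(C_p(X))$ is a genuine pseudocompact subspace of $C_p(Y)$ and Theorem \ref{1.8} applies to it; (ii) Theorem \ref{1.8} yields that $B$ \emph{itself} is an Eberlein compactum, not merely that its closure is compact, hence $B$ is compact and therefore closed in the Hausdorff space $C_p(Y)$, so its closure taken inside the (possibly non-closed) subspace $\pi_Y(C_p(X))$ coincides with $B$ --- this is what makes $(\alpha)$ hold as stated. With those two observations recorded, the appeal to Proposition 4.10 is immediate and gives exactly the conclusion of the theorem.
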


\begin{theorem}(Theorem 8.3 in \cite{Arch2})\label{3.10}. If a Tychonoff space $X$ is functionally generated by a family $\lambda_p$ of all of its closed subspaces that contain a dense $\sigma$-pseudocompact subspace, then the closure in $C_p(X)$ of every countably pracompact subspace is compact.
 \end{theorem}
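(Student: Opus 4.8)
The plan is to deduce Theorem \ref{3.10} from the cited Proposition 4.10 of \cite{arch} by taking for ${\cal P}$ the property ``to be countably pracompact'' and for the generating family the given $\lambda_p$. Since $X$ is assumed functionally generated by $\lambda_p$, and since ``relatively compact'' is exactly ``having compact closure'', the conclusion of Theorem \ref{3.10} is literally the conclusion of that Proposition once its two remaining hypotheses are verified: that countable pracompactness is a continuously invariant property, and that condition $(\alpha)$ holds for $\lambda_p$.

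For continuous invariance, let $A\subseteq X$ be countably pracompact, witnessed by a dense $D\subseteq A$ all of whose infinite subsets accumulate in $A$, and let $f\colon X\to Z$ be continuous. Then $f(D)$ is dense in $f(A)$. Given an infinite $E\subseteq f(D)$, I would choose one $D$-preimage of each of its points to obtain an infinite $E'\subseteq D$; an accumulation point $p\in A$ of $E'$ has the property that, in the $T_1$ space $A$, every neighborhood of $p$ meets $E'$ in infinitely many points, so every neighborhood of $f(p)$ meets $E$ in a point distinct from $f(p)$. Hence $f(p)$ accumulates $E$ in $f(A)$, and $f(A)$ is countably pracompact. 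This step is routine and poses no difficulty.

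The substance of the proof is condition $(\alpha)$: for $Y\in\lambda_p$ and a countably pracompact $B\subseteq\pi_Y(C_p(X))$, the closure of $B$ in $\pi_Y(C_p(X))$ must be compact. Here $Y$ is a closed subspace of $X$ carrying a dense $\sigma$-pseudocompact subspace, and $\pi_Y(C_p(X))$ is a subspace of $C_p(Y)$. I would first pass to a dense $D\subseteq B$ that is relatively countably compact in $B\subseteq C_p(Y)$, which exists because $B$ is countably pracompact. Since $Y$ contains a dense $\sigma$-pseudocompact subspace, the Grothendieck-type mechanism behind Theorem \ref{1.9} applies to $C_p(Y)$ and upgrades relative countable compactness to relative compactness; thus $\overline{D}^{C_p(Y)}$ is compact, and as $D$ is dense in $B$ this closure coincides with $\overline{B}^{C_p(Y)}$.

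The main obstacle is precisely the reconciliation between the ambient space $C_p(Y)$ and the subspace $\pi_Y(C_p(X))$ in which condition $(\alpha)$ demands compactness, together with the fact that Theorem \ref{1.9} is phrased for countably compact subspaces rather than for the relatively countably compact witness $D$. To close the gap I would verify that every function in $\overline{B}^{C_p(Y)}$ is in fact the restriction to $Y$ of a continuous function on $X$, so that $\overline{B}^{C_p(Y)}=\overline{B}^{\pi_Y(C_p(X))}$; this is the genuinely delicate point, and it is exactly where the continuity-of-pointwise-limits argument of the Grothendieck tradition (the same device used in the proof of Proposition \ref{1.5} above, where a continuous extension is read off from compactness of the projections) must be carried out. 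Once this is established, $\overline{B}^{\pi_Y(C_p(X))}$ is compact, condition $(\alpha)$ holds, and Proposition 4.10 of \cite{arch} yields that every countably pracompact $F\subseteq C_p(X)$ is relatively compact, completing the proof.
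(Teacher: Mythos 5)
First, a point of comparison: the paper itself contains no proof of this statement --- it is quoted verbatim as Theorem 8.3 of \cite{Arch2} --- so your proposal can only be measured against the strategy the surrounding text suggests, namely Proposition 4.10 of \cite{arch}, which the paper states immediately before. Your overall plan (apply that proposition with ${\cal P}$ equal to countable pracompactness and with the family $\lambda_p$) is indeed the standard route, and your verification that countable pracompactness is continuously invariant is correct.

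The genuine gap is in your treatment of condition $(\alpha)$, at exactly the point you yourself call ``genuinely delicate''. You propose to show that every function in $\overline{B}^{C_p(Y)}$ is the restriction to $Y$ of a function continuous on all of $X$, so that $\overline{B}^{C_p(Y)}=\overline{B}^{\pi_Y(C_p(X))}$. Nothing in the hypotheses supports this: $Y\in\lambda_p$ is merely a closed subspace of the Tychonoff space $X$, and closed subspaces need not be $C$-embedded, so continuous functions on $Y$ (in particular, limit functions of $B$) need not extend to $X$ at all. The device of Proposition \ref{1.5} that you invoke establishes continuity of a pointwise limit on the ambient space; it does not produce extensions from a subspace, so it cannot be ``carried out'' here. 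The way this difficulty is actually avoided in this circle of results (it is the mechanism of Lemma \ref{3.3} of the paper, which rests on the Preiss--Simon theorem, Proposition \ref{3.2}, via Theorem IV.5.4 of \cite{arch}) is different: since $B$ is countably pracompact, it is pseudocompact; once one knows that $\overline{B}^{C_p(Y)}$ is an \emph{Eberlein} compactum, the Preiss--Simon property forces the pseudocompact subspace $B$ to be closed in its closure, i.e., $B=\overline{B}^{C_p(Y)}$ is itself compact, and then the closure of $B$ in the subspace $\pi_Y(C_p(X))$ is just $B$ --- no extension argument is needed. Note also that the step feeding into this already has an unresolved citation gap, which you acknowledge: Theorem \ref{1.9} concerns countably compact subspaces, so it does not apply to your relatively countably compact witness $D$; the statement you actually need (for $Y$ with a dense $\sigma$-pseudocompact subspace, the closure in $C_p(Y)$ of a countably pracompact set is compact, indeed an Eberlein compactum) must either be proved by rerunning the Grothendieck-type argument under the weaker hypothesis or quoted from \cite{Arch2}. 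As written, both the upgrade step and the passage from $C_p(Y)$ to $\pi_Y(C_p(X))$ are open, so condition $(\alpha)$ is not verified.
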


As we will see later, a certain similarity between the results
using functional generation and generation by
Asanov-Velichko can be observed.

\section{The Grothendieck's theorem for spaces
$C_{\lambda,\mu}(X,Y)$}

  In this section, we prove two
theorems that can be considered basic in this paper.

\medskip
Recall that a topological space $X$
is called a {\it $\mu$-space} if every bounded
subset of $X$ is relatively compact \cite{Buch}. The class of $\mu$--spaces arose in connection with the study of the question of
barrels of spaces of continuous
real-valued functions $C_c(X)$ with compact-open topology. In 1973 H. Buchwolter proved that $C_c(X)$ is barreled if and only if
$X$ is a $\mu$-space. Later the notion of $\mu$-space
acquired a value of its own. The class of $\mu$-spaces is very
wide, it includes all Dieudonn\'{e} complete spaces,
in particular, all metrizable spaces.

\begin{lemma}\label{2.2}
Let $X$ be a Tychonoff space and let $Y$ be a
Tychonoff $\mu$--space with a countable pseudocharacter. Let $f:X \rightarrow Y$ be a condensation (= a continuous bijection). Then $X$ is a $\mu$-space.
\end{lemma}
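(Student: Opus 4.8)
The plan is to fix an arbitrary bounded subset $B\subseteq X$ and prove that its closure $\overline{B}^{X}$ is compact; since $B$ is arbitrary, this yields that $X$ is a $\mu$-space. The natural arena for the argument is the chain $X\subseteq \upsilon X\subseteq \beta X$ (Hewitt realcompactification inside the Stone--\v{C}ech compactification), because boundedness has a clean description there: a subset $B$ is bounded in $X$ if and only if $\overline{B}^{\beta X}\subseteq \upsilon X$. Writing $\widetilde{P}=\overline{B}^{\beta X}$, which is compact, the whole task reduces to showing $\widetilde{P}\subseteq X$, for then $\overline{B}^{X}=\widetilde{P}$ is compact.

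First I would transfer boundedness across $f$. If $h\colon Y\to\mathbb{R}$ is continuous, then $h\circ f$ is continuous on $X$ and hence bounded on $B$, so $h$ is bounded on $f(B)$; thus $f(B)$ is bounded in $Y$. Since $Y$ is a $\mu$-space, $K:=\overline{f(B)}^{Y}$ is compact, and because compacta are closed in $\beta Y$ we obtain $\beta f(\widetilde{P})=\overline{f(B)}^{\beta Y}=K\subseteq Y$. In particular, for every $p\in\widetilde{P}$ the point $y:=\beta f(p)$ lies in $Y$, so by the bijectivity of $f$ there is a unique $x:=f^{-1}(y)\in X$.

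The core of the proof is to show $p=x$ for each such $p$, and this is where the countable pseudocharacter of $Y$ enters. Since $Y$ is Tychonoff and $\{y\}$ is a $G_\delta$, a standard construction produces a continuous $g\colon Y\to[0,1]$ with $g^{-1}(0)=\{y\}$, so $\{y\}$ is a zero-set. Then $g\circ f$ extends to $G=\beta(g\circ f)=\beta g\circ\beta f$ on $\beta X$, and $G(p)=\beta g(y)=g(y)=0$, so $p$ lies in the zero-set $Z:=G^{-1}(0)$ of $\beta X$, while $Z\cap X=(g\circ f)^{-1}(0)=\{x\}$. If $p\neq x$, I would choose (by the Tychonoff property of $\beta X$) a continuous $\phi\colon\beta X\to[0,1]$ with $\phi(p)=0$ and $\phi(x)=1$, and set $Z'=\phi^{-1}(0)$; then $Z\cap Z'$ is a zero-set of $\beta X$ containing $p$ with $(Z\cap Z')\cap X=\emptyset$, contradicting the characterization that a point lies in $\upsilon X$ exactly when every zero-set of $\beta X$ containing it meets $X$. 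Hence $p=x\in X$, giving $\widetilde{P}\subseteq X$ and the compactness of $\overline{B}^{X}$.

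The main obstacle is conceptual rather than computational: a condensation need not preserve compactness in the backward direction, so one cannot simply pull back the compactness of $K$ to conclude that $\overline{B}^{X}$ is compact. The device that breaks this is the reduction of the countable pseudocharacter of $Y$ to a genuine zero-set description of singletons, combined with the zero-set characterization of $\upsilon X$. The injectivity of $f$ is used in an essential way here, precisely to guarantee that $f^{-1}(y)$ is a single point, so that the zero-set $Z$ meets $X$ in exactly one point; for a non-injective map this step, and the whole argument, would collapse.
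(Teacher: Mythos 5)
Your proof is correct and follows essentially the same route as the paper's: boundedness gives a compact closure in the realcompactification, the extension of $f$ pushes that closure into $Y$ (since $Y$ is a $\mu$-space), and countable pseudocharacter plus injectivity of $f$ yield a small set around any hypothetical point of the closure outside $X$ that meets $X$ in a single point, which can then be shrunk to miss $X$ entirely, contradicting the fact that every such set about a point of the realcompactification must meet $X$. The only differences are cosmetic: you work with zero-sets in $\beta X$ and the zero-set characterization of $\upsilon X$, while the paper works with $G_\delta$-sets directly in $\nu X$ via the extension $f_1\colon \nu X\rightarrow \nu Y$ and Engelking's Theorem 3.11.11.
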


\begin{proof} Let $f_1:\nu X\rightarrow \nu Y$ be a
continuous extension of the function $f$ from the Hewitt real-compactification $\nu X$ of $X$  onto the Hewitt real-compactification $\nu Y$ of $Y$ (Theorem 3.11.16 in \cite{Eng})). Now let $A$ be a bounded subset of $X$,
then $\overline{A}^{\nu X}$ is compact (Proposition 6.9.7 in \cite{ArTk}). Let us prove that $\overline{A}^{X}=\overline{A}^{\nu X}$, i.e. $\overline{A}^{\nu
X}\setminus X=\emptyset$. Let this not be the case and there is a point $x\in
\overline{A}^{\nu X}\setminus X$. The set $f(A)$ is bounded in
$Y$ and $\overline{f(A)}^Y$ is compact, hence,
$f_1(\overline{A}^{\nu X})=\overline{f(A)}^Y\subset Y$. Then
$V=f^{-1}_1(f_1(x))$ is a $G_{\delta}$-neighbourhood of $x$ because the pseudocharacter $\psi(f_1(x),Y)$ of the point $f_1(x)$ in $Y$ is countable,  and, hence, $\psi(f_1(x),\nu
Y)$ is countable, too. Since the mapping $f$ is one-to-one, $V\cap X$ contains only one point $x_1$. Let $O(x)$ be a neighborhood of the point $x$ in $\nu X$ which does not contain $x_1$. But then $V_1=V\cap O(x)$ is a $G_\delta$-neighbourhood of the point $x$ and is disjoint with $X$, which contradicts the following assertion
(Theorem 3.11.11 in \cite{Eng}): for any point $x\in \nu X$ and for any of its neighborhoods
$V\subset \nu X$ the intersection of $V\cap X$ is not empty.
\end{proof}

\begin{definition} Let $X$ be a topological space.
Let $\lambda$ be a family of subsets of $X$. A subset $A$ of $X$ is called {\it $\lambda$-separable}
if there is a countable subfamily $\lambda_1$ of the family $\lambda$ such that $A\subseteq \overline{\bigcup\{B:B\in
\lambda_1\}}$. If $A=X$ then $X$ is called a {\it $\lambda$-separable space}.
\end{definition}

\begin{lemma}\label{2.4} Let $X$ be a Tychonoff $\lambda$-separable space for some family $\lambda$ of subsets $X$.  Then $C_{\lambda,\rho}(X)$ is submetrizable.
\end{lemma}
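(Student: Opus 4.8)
The plan is to exhibit a metric $d$ on the underlying set $C(X)$ whose topology is coarser than that of $C_{\lambda,\rho}(X)$; since submetrizability means precisely the existence of a coarser metrizable topology, this suffices. Using $\lambda$-separability, I would fix a countable subfamily $\lambda_1=\{B_n:n\in\mathbb{N}\}\subseteq\lambda$ with $\overline{\bigcup_{n}B_n}=X$. For $f,g\in C(X)$ set $d_n(f,g)=\min\{1,\sup_{x\in B_n}\rho(f(x),g(x))\}$ and define $d(f,g)=\sum_{n\in\mathbb{N}}2^{-n}d_n(f,g)$, a sum bounded by $1$.

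First I would check that $d$ is a metric. Each $d_n$ is a pseudometric: the quantity $\sup_{x\in B_n}\rho(f(x),g(x))$ is symmetric and satisfies the triangle inequality, and composing it with the nondecreasing truncation $t\mapsto\min\{1,t\}$ (which obeys $\min\{1,a+b\}\le\min\{1,a\}+\min\{1,b\}$) preserves symmetry and the triangle inequality; hence the weighted sum $d$ is a pseudometric. The only delicate point is positive-definiteness, and this is exactly where $\lambda$-separability is used: if $f\ne g$, then $x\mapsto\rho(f(x),g(x))$ is a continuous function, positive on the nonempty open set $U=\{x\in X:f(x)\ne g(x)\}$; since $\bigcup_nB_n$ is dense, $U$ meets some $B_n$, so $d_n(f,g)>0$ and therefore $d(f,g)>0$. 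Thus $d$ separates the points of $C(X)$.

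Next I would verify that the identity map from $C_{\lambda,\rho}(X)$ onto $(C(X),d)$ is continuous, i.e. that the $d$-topology is coarser than the topology of uniform convergence on $\lambda$. Given $f$ and $\varepsilon>0$, choose $N$ with $\sum_{n>N}2^{-n}<\varepsilon/2$ and put $A=B_1\cup\cdots\cup B_N$. By condition (2) of Remark \ref{2.1}, $A\in\lambda$, so $W=\{g\in C(X):\sup_{x\in A}\rho(f(x),g(x))<\varepsilon/2\}$ is a basic neighbourhood of $f$ in $C_{\lambda,\rho}(X)$. For $g\in W$, the terms with $n\le N$ contribute at most $(\varepsilon/2)\sum_{n\le N}2^{-n}<\varepsilon/2$, since $B_n\subseteq A$ forces $d_n(f,g)\le\varepsilon/2$, while the tail contributes at most $\sum_{n>N}2^{-n}<\varepsilon/2$; hence $d(f,g)<\varepsilon$, so $W$ is contained in the $d$-ball of radius $\varepsilon$ about $f$. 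Consequently the identity is a condensation of $C_{\lambda,\rho}(X)$ onto the metric space $(C(X),d)$, which gives submetrizability.

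I expect the only genuinely load-bearing step to be the separation argument, where the density of $\bigcup_nB_n$ together with the continuity of the functions is essential; the pseudometric estimates and the tail splitting are routine. It is worth noting that Tychonoffness is not strictly needed for this argument beyond guaranteeing that $C_{\lambda,\rho}(X)$ is Hausdorff in the first place (cf. Remark \ref{2.1}); the decisive hypothesis is the existence of the countable dense union $\bigcup_n B_n$.
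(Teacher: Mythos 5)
Your proof is correct, and its mathematical core is the same as the paper's: the topology of uniform convergence on the countable subfamily $\lambda_1=\{B_n:n\in\mathbb{N}\}$ is metrizable, and the density of $\bigcup_n B_n$ makes this coarser topology still separate the points of $C(X)$. The difference is in execution. The paper proceeds structurally: it restricts functions to $X_1=\bigcup_n B_n$, observes that $\pi_{X_1}:C_{\lambda,\rho}(X)\rightarrow C_{\lambda_2,\rho}(X_1)$ is a condensation precisely because $X_1$ is dense, composes with the passage to the countable family $\lambda_1$, and cites Proposition 4.9 of \cite{ANO} for the metrizability of $C_{\lambda_1,\rho}(X_1)$. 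You instead write the metric down explicitly on $C(X)$ itself, $d(f,g)=\sum_n 2^{-n}\min\{1,\sup_{B_n}\rho(f(x),g(x))\}$, and verify everything by hand; your positive-definiteness argument (a nonempty open set where $f\ne g$ must meet the dense union) is exactly the paper's injectivity of the restriction map in different clothing. Your route buys self-containedness: no restriction machinery and no external citation, and it correctly exposes that condition (2) of Remark \ref{2.1} and Tychonoffness are not essential (one could intersect the finitely many neighbourhoods determined by $B_1,\dots,B_N$ rather than form their union inside $\lambda$). The paper's route, on the other hand, matches how Lemma \ref{2.4} is consumed downstream: Corollary \ref{2.5} and Proposition \ref{2.6} invoke it in the form ``there is a condensation onto a metric space'', which your identity map onto $(C(X),d)$ supplies equally well. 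One cosmetic point: your $W$, defined by a strict inequality on a supremum, is open but not literally a basic neighbourhood coming from the uniformity; for continuity of the identity at $f$ it is cleaner to use the basic set $\{g:\rho(f(x),g(x))<\varepsilon/4 \text{ for all } x\in A\}$, which is contained in $W$ --- this changes nothing in the argument.
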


\begin{proof} Let $\lambda_1$ be a countable subfamily of
$\lambda$ such that $X=\overline{\bigcup\{B:B\in
\lambda_1\}}$. Let $X_1=\bigcup \{B:B\in \lambda_1\}$ and
$\lambda_2=\{A\cap X_1:A\in \lambda\}$. It is obvious that the mapping
$\pi_{X_1}:C_{\lambda,\rho}(X)\rightarrow
C_{\lambda_2,\rho}(X_1)$ where $\pi_{X_1}(f)=f\upharpoonright X_1$ for every $f\in C_{\lambda,\rho}(X)$  is a condensation because $X_1$
is dense in $X$. On the other hand, $\lambda_1\subseteq\lambda_2$ and,
hence, the identity mapping
$e:C_{\lambda_1,\rho}(X)\rightarrow C_{\lambda_2,\rho}(X_1)$ is
a condensation, too. The mapping $f=e\circ\pi_{X_1}$ is a condensation from $C_{\lambda,\rho}(X)$ onto the metrizable space $C_{\lambda_1,\rho}(X_1)$ ($C_{\lambda_1,\rho}(X_1)$ is metrizable by Proposition 4.9 in \cite{ANO}).
\end{proof}

\begin{corollary}\label{2.5} Let $X$ be a Tychonoff $\lambda$--separable space for some family $\lambda$ of subsets of $X$. Then for every function $h\in
C_{\lambda,\rho}(X)$ there is a function
$f:C_{\lambda,\rho}(X)\rightarrow \mathbb{R}$ such that $f(h)=0$
and $f(h_1)>0$ for any $h_1\in C_{\lambda,\rho}(X)$.
\end{corollary}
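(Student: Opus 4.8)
The plan is to read the statement as asserting that the singleton $\{h\}$ is a zero-set in $C_{\lambda,\rho}(X)$: that is, there is a continuous real-valued $f$ with $f(h)=0$ and $f(h_1)>0$ for every $h_1\neq h$ (as written the condition ``$f(h_1)>0$ for any $h_1$'' cannot hold at $h_1=h$ together with $f(h)=0$, so this is the intended reading). Once framed this way, the corollary is an immediate consequence of the submetrizability proved in Lemma~\ref{2.4}.

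Concretely, I would invoke Lemma~\ref{2.4} to obtain a condensation $\varphi\colon C_{\lambda,\rho}(X)\to C_{\lambda_1,\rho}(X_1)$ onto a metrizable space, and fix a metric $d$ inducing the topology of the target $C_{\lambda_1,\rho}(X_1)$. For the given $h$, I would then simply set $f(h_1)=d(\varphi(h),\varphi(h_1))$ for every $h_1\in C_{\lambda,\rho}(X)$. Continuity of $f$ follows because $\varphi$ is continuous and the metric $d$ is continuous on $C_{\lambda_1,\rho}(X_1)\times C_{\lambda_1,\rho}(X_1)$, so $f$ is a composition of continuous maps; the identity $f(h)=d(\varphi(h),\varphi(h))=0$ is immediate.

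For the strict positivity off the point $h$, the key feature of a condensation is that it is a \emph{bijection}, hence injective: if $h_1\neq h$ then $\varphi(h_1)\neq\varphi(h)$, and since $d$ is a metric it separates distinct points, giving $f(h_1)=d(\varphi(h),\varphi(h_1))>0$. There is essentially no obstacle here—the whole content of the corollary is packaged inside Lemma~\ref{2.4}, and the only facts one must invoke beyond it are that a metric vanishes exactly on the diagonal and that the condensation $\varphi$ is one-to-one. The mild point worth stating explicitly is that passing to a coarser metrizable topology is precisely what converts the purely topological separation of points into a genuine continuous separating function of the required form.
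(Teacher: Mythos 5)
Your proof is correct and is essentially identical to the paper's own argument: the paper likewise takes the condensation $g$ onto a metric space $Z$ furnished by Lemma~\ref{2.4} and sets $f=f_1\circ g$ with $f_1(y)=d(g(h),y)$, which is exactly your $f(h_1)=d(\varphi(h),\varphi(h_1))$. Your reading of the statement (continuity of $f$ and positivity only for $h_1\neq h$) matches what the paper's proof actually establishes.
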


\begin{proof} Let $g:C_{\lambda,\rho}(X)\rightarrow Z$ be a condensation of the space $C_{\lambda,\rho}(X)$ onto a metric
space $Z$. Let $h\in C_{\lambda,\rho}(X)$ and $x=g(h)$. Then $f=f_1\circ g$ where the mapping $f_1:Z \rightarrow \mathbb{R}$ such that $f_1(y)=d(x,y)$ for every $y\in Z$.
\end{proof}

\begin{proposition}\label{2.6} Let $X$ be a Tychonoff space and let
$\lambda$ be a cover of $X$. If $F$ is a bounded subset of
$C_{\lambda,\rho}(X)$ then for every $\lambda$-separable
$Y\subseteq X$ there exists $F_1\subseteq C_{\lambda,\rho}(X)$ such
that $F\subseteq F_1$ and
$\pi_Y(F_1)=\overline{\pi_Y(F)}^{C_{\lambda_Y,\rho}(Y)}$ is a
metrizable compact space.
\end{proposition}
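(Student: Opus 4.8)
The plan is to reduce the whole problem to a \emph{closed}, $\lambda$-separable subspace of $X$ on which Lemmas~\ref{2.2} and \ref{2.4} apply directly, to obtain compactness and metrizability there, and then to transport the result back to $Y$; the genuinely delicate point will be the equality $\pi_Y(F_1)=\overline{\pi_Y(F)}$, which forces an extension of functions from that subspace to all of $X$. First, using the $\lambda$-separability of $Y$, I fix a countable $\lambda_1\subseteq\lambda$ with $Y\subseteq\overline{X_1}$, where $X_1=\bigcup\{B:B\in\lambda_1\}$, and set $Z=\overline{X_1}$ (closure in $X$) and $\lambda_Z=\{A\cap Z:A\in\lambda\}$. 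Then $Z$ is a closed Tychonoff subspace with $Y\subseteq Z$, and $Z$ is $\lambda_Z$-separable: since each $B\in\lambda_1$ satisfies $B=B\cap Z\in\lambda_Z$, we get $\overline{X_1}^{\,Z}=\overline{X_1}^{\,X}\cap Z=Z$. Both restriction maps $\pi_Z\colon C_{\lambda,\rho}(X)\to C_{\lambda_Z,\rho}(Z)$ and $r\colon C_{\lambda_Z,\rho}(Z)\to C_{\lambda_Y,\rho}(Y)$ are continuous, with $\pi_Y=r\circ\pi_Z$, since a basic entourage on the target, determined by a set $A\cap Z$ (resp. $A\cap Y$) and some $\varepsilon$, pulls back to the entourage determined by the same $\varepsilon$ and the larger set $A$ (resp. $A\cap Z$). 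Because continuous real functions compose, the images $\pi_Z(F)$ and $\pi_Y(F)$ of the bounded set $F$ are again bounded.

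Next I would establish compactness and metrizability on $Z$. By Lemma~\ref{2.4}, $C_{\lambda_Z,\rho}(Z)$ is submetrizable, so it condenses onto a metric space $M$; since metric spaces are $\mu$-spaces of countable pseudocharacter, Lemma~\ref{2.2} (applied to this condensation) shows that $C_{\lambda_Z,\rho}(Z)$ is itself a $\mu$-space. Hence the bounded set $\pi_Z(F)$ has compact closure $K_Z:=\overline{\pi_Z(F)}^{\,C_{\lambda_Z,\rho}(Z)}$, and being a compact subspace of a submetrizable space, $K_Z$ is metrizable (the condensation onto $M$ restricts to a homeomorphism on $K_Z$). Restricting to $Y$, the set $r(K_Z)$ is compact, hence closed, contains $\pi_Y(F)=r(\pi_Z(F))$, and is contained in $\overline{\pi_Y(F)}$ by continuity of $r$; therefore $r(K_Z)=\overline{\pi_Y(F)}^{\,C_{\lambda_Y,\rho}(Y)}=:K_Y$. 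Thus $K_Y$ is a continuous image of the compact metric space $K_Z$, and is therefore itself compact and metrizable, which settles the first half of the conclusion.

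It remains to produce $F_1\subseteq C_{\lambda,\rho}(X)$ with $F\subseteq F_1$ and $\pi_Y(F_1)=K_Y$, and this surjectivity is the main obstacle. Since $r(K_Z)=K_Y$, every $g\in K_Y$ has a preimage $\hat g\in K_Z\subseteq C(Z)$ with $\hat g\upharpoonright Y=g$, so the task reduces to extending each such $\hat g$ from the closed subspace $Z=\overline{X_1}$ to a continuous function on all of $X$; taking $F_1=F\cup\{\text{one chosen extension for each }g\in K_Y\}$ would then complete the proof, since $\pi_Y$ of such an extension is exactly $g$. To carry out the extension I would exploit that, by metrizability of $K_Z$, $\hat g$ is a limit in $C_{\lambda_Z,\rho}(Z)$ of restrictions $f_n\upharpoonright Z$ of genuinely \emph{global} functions $f_n\in F\subseteq C(X)$, with the convergence uniform on each of the countably many sets $B\in\lambda_1$ after a diagonal choice. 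The critical difficulty, where the hypotheses must be used most carefully, is guaranteeing continuity of the extension at the boundary points of $Z\setminus X_1$, where one controls the approximants only $\lambda$-uniformly rather than globally; I expect to resolve this either by a Tietze-type extension from the closed set $Z$ (once the requisite normality is verified or circumvented via the realcompactification $\nu X$ used in Lemma~\ref{2.2}) or by assembling the extension directly from the global approximants $f_n$ and their uniform estimates on $\bigcup\lambda_1$.
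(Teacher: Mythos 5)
Your first half is sound and is essentially the paper's own argument: Lemma \ref{2.4} gives submetrizability, Lemma \ref{2.2} upgrades that to the $\mu$-space property, boundedness of the projection of $F$ then gives a compact closure, and compact plus submetrizable gives metrizable. The detour through $Z=\overline{X_1}$ is harmless but unnecessary --- the paper applies the two lemmas directly to $Y$ with the family $\lambda_Y$ --- and, worse, it makes your remaining task look like an extension problem from a closed set, which is exactly the wrong framing.

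The genuine gap is the step you explicitly postpone: showing that every $h$ in $K_Y=\overline{\pi_Y(F)}^{C_{\lambda_Y,\rho}(Y)}$ is the restriction of some member of $C_{\lambda,\rho}(X)$. Neither of your proposed routes can close it. A Tietze-type extension is unavailable: $X$ is only Tychonoff, and a closed subspace of a Tychonoff space need not be $C$-embedded or even $C^{*}$-embedded, so continuity of $\hat g$ on $Z$ gives nothing on $X$. Assembling an extension from the approximants $f_n\in F$ also cannot work: you control $f_n-\hat g$ only uniformly on each element of $\lambda_Z$, you have no control whatsoever on $(f_n)$ off $\bigcup\lambda_1$, and no pointwise limit need exist there. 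The structural problem is that your plan consumes the boundedness of $F$ only once (to get relative compactness of the projections), but relative compactness of $\pi_Y(F)$ alone does not force limit functions to be extendable; the surjectivity step must use boundedness of $F$ a second time, and the paper does so with a trick that needs no extension theorem at all. Namely, suppose some $h_1\in K_Y$ has no preimage in $C_{\lambda,\rho}(X)$. By Corollary \ref{2.5} (applicable since $Y$ is $\lambda$-separable) there is a continuous $\phi:C_{\lambda_Y,\rho}(Y)\rightarrow\mathbb{R}$ with $\phi(h_1)=0$ and $\phi>0$ elsewhere. Then $\phi\circ\pi_Y$ is strictly positive on all of $C_{\lambda,\rho}(X)$, so $\frac{1}{\phi\circ\pi_Y}$ is a well-defined continuous real-valued function on $C_{\lambda,\rho}(X)$; since $\pi_Y(F)$ accumulates at $h_1$, this function is unbounded on $F$, contradicting the boundedness of $F$. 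Hence every fiber $\pi_Y^{-1}(h)$, $h\in K_Y$, meets $C_{\lambda,\rho}(X)$, and $F_1=\bigcup\{\pi_Y^{-1}(h):h\in K_Y\}$ finishes the proof. This ``reciprocal of a function vanishing only at the missing point'' argument is the key idea your proposal lacks, and without it the proof does not go through.
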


\begin{proof} Since $Y$ is $\lambda$-separable,
there exists a condensation $g:C_{\lambda_Y,\rho}(Y)\rightarrow Z$ where
$Z$ is a metrizable space (Lemma \ref{2.4}). Since $Z$ is a metrizable space, it is a $\mu$-space with a countable pseudocharacter.
By Lemma \ref{2.2}, $C_{\lambda_Y,\rho}(Y)$ is a
$\mu$-space. The set $\pi_Y(F)$ is bounded in
$C_{\lambda_Y,\rho}(X)$ and, hence, it is relatively compact.
Since the compact set $\overline{\pi_Y(F)}^{C_{\lambda_Y,\rho}(Y)}$
condenses into $Z$, then it is metrizable. For every point $h\in
\overline{\pi_Y(F)}^{C_{\lambda_Y,\rho}(Y)}$ the following condition holds:
$\pi^{-1}_Y(h)\cap C_{\lambda,\rho}(X)\neq \emptyset$.

Indeed, if this was not the case for some $h_1$, then
there would be a continuous function $\phi
:C_{\lambda,\rho}(Y)\rightarrow \mathbb{R}$ such that $\phi
(h_1)=0$ and $\phi(h)>0$ for $h\neq h_1$ (Corollary \ref{2.5}). Putting $\phi_1=\frac{1}{x}$ for every $x\in (0,+\infty)$,
$\phi_1:(0,+\infty)\rightarrow \mathbb{R}$ we would have a continuous
function $\phi_1\circ \phi\circ
\pi_Y:C_{\lambda,\rho}(X)\rightarrow \mathbb{R}$ which is unbounded
on $F$. Thus, we have $F_1=\bigcup \{\pi^{-1}_Y(h): h\in
\overline{\pi_Y(F)}^{C_{\lambda_Y,\rho}(Y)}\}$.
\end{proof}

\medskip

If $\lambda$ is a family of subsets of a topological
space $X$, then the family
of all countable unions of elements of $\lambda$ will be denoted by $\sigma\lambda$.

\medskip
 Let $X$ be a topological space, $\lambda\subseteq 2^X$. Let
$Q=\{B\subseteq X:\, \overline{A}\cap B$ is closed in
$\overline{A}$ for all $A\in\lambda\}$. Let $X_{\lambda}$ be a set
$X$ with the topology $\tau=\{X\setminus B:\, B\in Q\}$.  Further, we will call such
space a {\it $\lambda$-leader} of $X$.

Let $X$ be a Tychonoff space, $\lambda\subseteq 2^X$. Let
$X_{\tau\lambda}$ be a Tychonoff modification of the $\lambda$-leader
$X_{\lambda}$ of the space $X$.  Further, we will
call such space {\it $\lambda_f$-leader} of $X$ and denote by
$X_{\tau\lambda}$ (see more about $\lambda$- and $\lambda_f$-leaders of $X$ in \cite{ANO}).

\begin{theorem}\label{2.7} Let $X$ be a Tychonoff $AV_{\sigma \lambda}$-space for some cover $\lambda$ of $X$. Then $C_{\lambda,\rho}(X)$
is a $\mu$-space.
\end{theorem}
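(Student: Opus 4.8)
The plan is to realize $C_{\lambda,\rho}(X)$ as a subspace of a product of metrizable compacta and then to show, by rerunning the machinery in the proof of Proposition \ref{1.5} with Proposition \ref{2.6} in place of condition $(\gamma)$, that the closure of a bounded set cannot leave this subspace. Fix a bounded $F\subseteq C_{\lambda,\rho}(X)$ and embed $C_{\lambda,\rho}(X)$ into $P=\prod_{A\in\lambda}C_{\lambda_A,\rho}(A)$ by $e(f)=(f\upharpoonright A)_{A\in\lambda}$; by the definition of the topology of uniform convergence, $e$ is a topological embedding. Every $A\in\lambda$ is $\lambda$-separable (take $\lambda_1=\{A\}$), so Proposition \ref{2.6} with $Y=A$ makes $\overline{\pi_A(F)}^{C_{\lambda_A,\rho}(A)}$ a metrizable compact space. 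Hence $K=\prod_{A\in\lambda}\overline{\pi_A(F)}$ is compact and $\overline{e(F)}^{P}\subseteq K$ is compact. Since $\lambda$ covers $X$ and the evaluations $h\mapsto h_A(x)$ are continuous on $P$, each $g=(g_A)_{A\in\lambda}\in\overline{e(F)}^{P}$ is a compatible family and so glues to a single function $g\colon X\to\mathbb{R}$ with $g\upharpoonright A=g_A$. It then suffices to prove that every such $g$ is continuous: this gives $\overline{e(F)}^{P}\subseteq e(C_{\lambda,\rho}(X))$, whence $\overline{F}^{C_{\lambda,\rho}(X)}=e^{-1}(\overline{e(F)}^{P})$ is compact and $C_{\lambda,\rho}(X)$ is a $\mu$-space.

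Suppose $g$ were discontinuous. Then $C=g^{-1}(M)$ is non-closed for some closed $M\subseteq\mathbb{R}$, and since $X$ is an $AV_{\sigma\lambda}$-space I may fix $x_0\in\overline{C}\setminus C$ for which player TWO has a winning strategy in $AV_{\sigma\lambda}(X)$; put $\epsilon=\rho(g(x_0),M)>0$. I now carry out the inductive construction of the proof of Proposition \ref{1.5}, with the single modification that the matching functions are produced by Proposition \ref{2.6} rather than by compactness of a projection. Given the sets $T_1,\dots,T_n\in\sigma\lambda$ chosen so far by TWO, I put $L_n=\bigcup_{i\le n}T_i\cup\{x_0\}\in\sigma\lambda$; this set is $\lambda$-separable, $\pi_{L_n}(g)\in\overline{\pi_{L_n}(F)}$, and the fibre statement in Proposition \ref{2.6} supplies a continuous $f_{n+1}\in C_{\lambda,\rho}(X)$ with $\pi_{L_n}(f_{n+1})=\pi_{L_n}(g)$. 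Feeding ONE's moves $V_{n+1}=f_{n+1}^{-1}(\{r:\rho(g(x_0),r)<\frac{1}{n+1}\})\cap V_n$ into TWO's strategy returns $S_{n+1}\subseteq C$ and $T_{n+1}\in\sigma\lambda$ with $S_{n+1}\subseteq\overline{T_{n+1}}$.

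Since the strategy is winning, there is $x_\infty\in\overline{\bigcup_n S_n}\cap\bigcap_n V_n$. Setting $Y=\bigcup_n T_n\cup\{x_\infty\}\cup\{x_0\}\in\sigma\lambda$ and applying Proposition \ref{2.6} once more yields a continuous $f_\infty$ with $\pi_Y(f_\infty)=\pi_Y(g)$, so $f_\infty=g$ on $\bigcup_n T_n$, while $x_\infty\in\bigcap_n V_n$ forces $f_n(x_\infty)\to g(x_0)$ and hence $f_\infty(x_\infty)=g(x_0)$. With $T=(\bigcup_n T_n)\setminus\{x:\rho(g(x),g(x_\infty))<\frac{3}{4}\epsilon\}$ one gets $\rho(f_\infty(x),f_\infty(x_\infty))\ge\frac{3}{4}\epsilon$ for all $x\in T$, and the same $\frac{1}{4}\epsilon$-estimate as in Proposition \ref{1.5}—using for each neighbourhood $O(x_\infty)$ a point $x'\in S_n\cap O(x_\infty)$, the set $H=T_n\cup\{x'\}\in\sigma\lambda$, and a continuous $f'$ with $\pi_H(f')=\pi_H(g)$ from Proposition \ref{2.6}—shows $x_\infty\in\overline{T}$, contradicting the continuity of $f_\infty$. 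Thus $g$ is continuous and the proof is complete.

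The main obstacle is the continuity argument: everything rests on replacing the compactness provided by $(\gamma)$ in Proposition \ref{1.5} with the fibre-nonemptiness in Proposition \ref{2.6}, and on verifying that every auxiliary set built along the way ($L_n$, $Y$, and $H$) remains in $\sigma\lambda$, hence $\lambda$-separable, so that Proposition \ref{2.6} keeps furnishing continuous functions agreeing with the limit $g$ on these sets. The uniform-convergence topology enters only through the embedding step; once continuity of the limit functions is secured, compactness of $\overline{F}$ follows at once.
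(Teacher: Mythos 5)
Your overall architecture (embed $C_{\lambda,\rho}(X)$ into $P=\prod_{A\in\lambda}C_{\lambda_A,\rho}(A)$, glue each point of $\overline{e(F)}^{P}$ to a function $g\colon X\to\mathbb{R}$, and prove $g$ continuous via the game) is a reasonable substitute for the paper's device of taking the closure in $Z=C_{e^{-1}(\lambda),\rho}(X_{\tau\lambda})$ over the $\lambda_f$-leader. The fatal problem is in the final step of your continuity argument. You produce $f_\infty$ from the fibre statement of Proposition~\ref{2.6} applied to $\pi_Y(g)$, so by construction $f_\infty(x_\infty)=g(x_\infty)$. You then write ``$x_\infty\in\bigcap_n V_n$ forces $f_n(x_\infty)\to g(x_0)$ and hence $f_\infty(x_\infty)=g(x_0)$''. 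The first half is true, but the conclusion is a non sequitur: each $f_{n+1}$ agrees with $g$ only on $L_n=\bigcup_{i\le n}T_i\cup\{x_0\}$, and $x_\infty$ lies only in $\overline{\bigcup_n S_n}$, which need not meet any $L_n$; so the convergence $f_n(x_\infty)\to g(x_0)$ carries no information about $g(x_\infty)$. The mismatch then kills the contradiction: your $\frac{1}{4}\epsilon$-estimate produces points $x''\in T_n$ with $\rho(g(x''),g(x_0))\ge\frac{3}{4}\epsilon$ (the distance is measured from $g(x_0)$, because the points $x'\in S_n$ satisfy $g(x')\in M$ and $\epsilon=\rho(g(x_0),M)$), whereas your set $T$ is defined by distance from $g(x_\infty)$. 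Without knowing that $g(x_\infty)$ equals (or is close to) $g(x_0)$ you cannot conclude $x''\in T$, hence cannot show $x_\infty\in\overline{T}$, hence obtain no contradiction with the continuity of $f_\infty$.

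This is precisely where the paper's proof is built differently, and repairing your argument essentially forces you back to it: the paper's approximating functions $f_{n+1}$ are taken from $F$ itself, uniformly $(\frac{1}{2})^{n+1}$-close to the limit function on the \emph{finite} unions $P_n=\bigcup_{i,j\le n}A^i_j\cup\{x_0\}\in\lambda$ (this is why each $T_j\in\sigma\lambda$ is decomposed as $\bigcup_i A^i_j$ with $A^i_j\in\lambda$, and why closeness on members of $\lambda$ is available from the topology), and $f_\omega$ is then chosen as a continuous extension of a \emph{cluster point} of $\{\pi_Y(f_i)\}$ in the compact set supplied by Proposition~\ref{2.6} --- not as an extension of $\pi_Y(g)$. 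With that choice $f_\omega(x_\omega)$ is a cluster value of the convergent sequence $(f_i(x_\omega))$, so $f_\omega(x_\omega)=g(x_0)$ comes for free, and $T$ is correspondingly defined by distance from $g(x_0)$. A secondary, fixable lapse: invoking the fibre statement for $\pi_{L_n}(g)$, $\pi_Y(g)$ and $\pi_H(g)$ presupposes that these restrictions are continuous and lie in the closure of $\pi(F)$ taken \emph{inside the space of continuous functions}; you assert this membership without proof. It can be justified (the closure from Proposition~\ref{2.6} is compact, hence closed in the space of all functions with the topology of uniform convergence on the trace of $\lambda$, and $g$ lies in that larger closure because finite unions of members of $\lambda$ stay in $\lambda$), but as written it is another gap.
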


\begin{proof} Let $e:X_{\tau\lambda}\rightarrow X$ be a natural condensation of the $\lambda_f$-leader $X_{\tau\lambda}$ on
$X$. Denote by $Z=C_{e^{-1}(\lambda),\rho}(X_{\tau\lambda})$. Let
$F$ be a bounded subset of $C_{\lambda,\rho }(X)$,
then $F_0=\overline{F}^{Z}$ is compact because $Z$ is a complete uniform space (a $\mu$-space).

It suffices to show that $F_0\subset C_{\lambda,\rho}(X)$. Let $f\in F_0$. Let us prove that $f\in C_{\lambda,\rho}(X)$.

Assume the contrary, i.e. that $f$ is a discontinuous function from $X$ to
$\mathbb{R}$. Then there is a closed set $B\subset
\mathbb{R}$ such that $A=f^{-1}(B)$ is not closed in $X$. Since $X$ is an $AV_{\sigma \lambda}$-space, there is a point $x_0\in \overline{A}\setminus A$ for
which player TWO has a winning strategy in the game $AV_{\sigma \lambda}(X)$. Put $\epsilon
=\rho (f(x_0),B)$. Since $B$ is closed, $\epsilon
>0.$

The fact that $f\in F_0$ implies that there is a function $f_1\in F$
such that $|f(x_0)-f_1(x_0)|<\frac{1}{2}$. Since $\lambda$ is a cover of $X$, $\{x_0\}\in \lambda$ (Remark \ref{2.1}).

Let $V_1=\{x\in X:
|f(x_0)-f_1(x)|<\frac{1}{2}$. Then $V_1$ is a neighborhood of $x_0$.
Let $S_1\subseteq A$ and $T_1=\bigcup\{A^{i}_1\in \lambda :i\in
\mathbb{N}\}\in \sigma \lambda$ be a set chosen by player TWO
according to the winning strategy in the game $AV_{\sigma \lambda}(X)$. Let $P_1=A_1\in \lambda$.

Suppose that the functions $f_1,..., f_n$, the neighborhoods $V_1, ...,V_n$ of $x_0$, the subsets $S_1, ...,S_n$ of  $A$, the elements $T_1, ...,T_n$ of the family $\sigma \lambda$ such that $S_{j}\subseteq \overline{T_{j}}$ for every $j=1,...,n$ and
$T_{j}=\bigcup\{A^{i}_{j}\in \lambda: i\in \mathbb{N}\}\in
\sigma\lambda$ are constructed. Put $P_n=\bigcup\limits_{i=1}^n
\bigcup\limits_{j=1}^{n}A^{i}_{j}\cup\{x_0\}\in \lambda$ (see
Remark \ref{2.1}). Then there is a function $f_{n+1}$ such that
 $|f(x)-f_{n+1}(x)|<(\frac{1}{2})^{n+1}$ for every $x\in P_n$.
Let $V_{n+1}=\{x\in X:
|f(x_0)-f_{n+1}(x)|<(\frac{1}{2})^{n+1}\}$. Then $V_{n+1}$ is a neighborhood of
$x_0$. Let sets $S_{n+1}\subseteq A$ and
$T_{n+1}=\bigcup\{A^{i}_{n+1}\in \lambda : i\in \mathbb{N}\}\in
\sigma \lambda$ are chosen by player TWO according to the winning
strategy in the game $AV_{\sigma \lambda}(X)$.
Put $P_{n+1}=\bigcup\limits_{i=1}^{n+1}\bigcup\limits_{j=1}^{n+1}A^{i}_{j}\cup
\{x_0\}\in \lambda$.

Continuing this process, by induction we construct countable sets
$\{f_i: i\in \mathbb{N}\}$, $\{V_i:i\in \mathbb{N}\}$, $\{S_i: i\in
\mathbb{N}\}$ and $\{T_i: i\in \mathbb{N}\}$. Since $X$ is an $AV_{\sigma \lambda}$-space, there is a point
$x_{\omega }\in \overline{\bigcup\{S_i:i\in
\mathbb{N}\}}\cap(\bigcap\{V_i:i\in \mathbb{N}\})$.

Put $Y=\bigcup\{T_i:i\in
\mathbb{N}\}\cup\{x_0\}\cup\{x_{\omega}\}$. There is a function
$f_{\omega}\in C_{\lambda ,\rho}(X)$ such that $\pi_Y(f_{\omega
})\in \overline{\pi _Y(\{f_i:i\in
\mathbb{N}\})}^{C_{\lambda_Y,\rho}(Y)}$ (see Proposition \ref{2.6}).
Then two conditions hold:

(1) $f_{\omega }(x_{\omega })=f(x_0)$ because $x_{\omega }\in
\bigcap\{V_i:i\in \mathbb{N}\}$;

(2) $f_{\omega }(x)=f(x)$ for every $x\in \bigcup\{T_i:i\in
\mathbb{N}\}$.

  Let $T=(\bigcup\{T_i:i\in \mathbb{N}\})\setminus\{x\in X: |f(x_0)-f(x)|<\frac{3}{4 }\epsilon\}$.

By conditions (1) and (2), $|f_{\omega }(x_0)-f_{\omega }(x)|\ge
\frac{3}{4}\epsilon $ for every $x\in T$. If we get that
$x_{\omega }\in \overline{T}$, then we obtain a contradiction with a
continuity of $f_{\omega}$, and the theorem can be proven.

Indeed, let $O(x_{\omega })$ be an arbitrary neighborhood of the
point $x_{\omega }$. Then there is a point $x_1$ and $n\in
\mathbb{N}$ such that $x_1\in S_n\cap O(x_{\omega})$.

The set $H=T_n \cup \{x_1\}$ is $\lambda$-separable and, by Proposition \ref{2.6},
there is a function $f'\in C_{e^{-1}(\lambda),\rho}(X_{\tau\lambda})$
such that $\pi_H(f')=\pi_H(f)$. Since $f'$ is continuous,
there exists a point $x_2\in T_n$ such that
$|f'(x_1)-f'(x_2)|<\frac{1}{4}\epsilon $. But then

$|f(x_2)-f(x_0)|\ge |f(x_1)-f(x_0)|-|f'(x_1)-f'(x_2)|\ge \epsilon
-\frac{1}{4}\epsilon=\frac{3}{4}\epsilon$ , i.e. $x_2\in T\cap
O(x_{\omega })$.

\end{proof}

Theorem \ref{2.7} is a generalization of the Asanov-Velichko theorem in
\cite{AsVel}. It is interesting to note that in Theorem \ref{2.7} we can replace the assumption that $X$ is an $AV_{\lambda}$-space with the
assumption that $X$ is functionally generated by the family $\lambda$. This similarity forces us to find out the
relationship between classes of $AV_{\lambda}$-spaces and spaces that are functionally generated by the same family $\lambda$ of subsets.

\begin{proposition} There is a Tychonoff space $X$
functionally generated by a family $\lambda_c$ of all of its non-empty
countable subsets, which is not an $AV_{\lambda_c}$-space.
\end{proposition}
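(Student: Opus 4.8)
The plan is to separate the two classes by a single Tychonoff example $X$: a space that is functionally generated by its countable subsets yet admits a non-closed set on which player TWO loses the game $AV_{\lambda_c}(X)$. The first observation guiding the search is that countable tightness already forces both properties: if $t(X)=\omega$, then for any non-closed $A$ some $x\in\overline{A}\setminus A$ lies in $\overline{C}$ for a countable $C\subseteq A$, whence TWO wins by playing $T_1=S_1=C$, while the same countable witnessing makes $X$ functionally generated. Hence the example must have uncountable tightness, and its functional generation has to be a global phenomenon (every discontinuous $f$ is detected on some countable subset) rather than a local one. I would therefore look for a space with countable functional tightness but uncountable tightness, modelled on products such as $\{0,1\}^{\omega_1}$ where continuity of a real function is determined by countably many coordinates while the tightness is uncountable.

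The mechanism forcing a loss for TWO rests on the containment $\overline{\bigcup\{S_n:n\in\mathbb{N}\}}\subseteq\overline{\bigcup\{T_n:n\in\mathbb{N}\}}$, in which $\bigcup\{T_n:n\in\mathbb{N}\}$ is a countable subset of $X$; thus every cluster point TWO can produce lies in the closure of a countable set. I would choose the non-closed $A$ so that each boundary point $x\in\overline{A}\setminus A$ has uncountable tightness relative to $A$, i.e. $x\notin\overline{D}$ for every countable $D\subseteq A$, so that $x$ itself is never a witness for TWO. Playing adaptively---and using that the chosen boundary point admits a countable decreasing family of neighbourhoods whose intersection meets no closure of a countable subset of $A$ other than possibly $\{x\}$---player ONE can arrange $\bigcap\{V_n:n\in\mathbb{N}\}$ to be disjoint from $\overline{\bigcup\{S_n:n\in\mathbb{N}\}}$ for every play of TWO. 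Since this succeeds at each point of $\overline{A}\setminus A$, player TWO has no winning strategy on $A$, and $X$ is not an $AV_{\lambda_c}$-space.

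Verifying functional generation of the same $X$ is where I expect the real difficulty. The obstruction is the ``invisible'' discontinuity: a function equal to a constant off the high-tightness boundary point and different there is discontinuous, yet---precisely because no countable subset clusters at that point---its restriction to every countable set extends continuously, which would destroy functional generation. The construction must therefore guarantee that any such discontinuity is caught through some other part of the space, i.e. the uncountable-tightness point has to be functionally shadowed by countably many detecting sequences (as happens for cubes via coordinate dependence). Reconciling this global detectability with the local cornering that defeats TWO, and checking that ONE's adaptive strategy succeeds against every strategy of TWO, is the main obstacle; once a space realizing both demands is produced, the Tychonoff property is arranged by taking it zero-dimensional or as a subspace of a cube, which is routine.
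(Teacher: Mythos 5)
You have mapped the terrain correctly---the example must have uncountable tightness while every discontinuity is detectable on countable sets---and this matches the paper's actual choice: $X=\prod\{\mathbb{R}_{\alpha}:\alpha\in\Lambda\}$ with $|\Lambda|$ an uncountable Ulam non-measurable cardinal, whose functional generation by countable subsets is quoted from Uspenskii's theorem, and whose failure of the $AV_{\lambda_c}$ property is argued on one explicit witness set $A$ (the points whose nonzero coordinates all equal a common $n\in\mathbb{N}$, with at most $n$ of them, so that $\overline{A}\setminus A$ is the single point $\mathbf{0}$), with player ONE using a diagonal strategy that one by one restricts the coordinates occurring in the supports of TWO's moves. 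But your text stops exactly where a proof would have to begin: no space $X$ is exhibited, no witness set $A$ is exhibited, functional generation is never verified (you yourself defer it as ``the main obstacle''), and ONE's winning strategy is postulated rather than constructed. As it stands this is a plan for finding a proof, not a proof.

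Moreover, two of the steps you do sketch would fail as stated. First, your model spaces are on the wrong side of the statement: $\{0,1\}^{\omega_1}$ and $\mathbb{R}^{\omega_1}$ (indeed $\mathbb{R}^{\Lambda}$ for every $\Lambda$ with $|\Lambda|\le\mathfrak{c}$, by the Hewitt--Marczewski--Pondiczery theorem) are separable, and every separable space is automatically an $AV_{\lambda_c}$-space: TWO plays a countable dense set $T_n$ at every step, so $\overline{T_n}=X$ and the move $S_n=A$ is legal; then $\overline{\bigcup\{S_n:n\in\mathbb{N}\}}=\overline{A}$ contains the boundary point $x$, which lies in $\bigcap\{V_n:n\in\mathbb{N}\}$ no matter what ONE does. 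So any viable example must in particular have uncountable density, a constraint your proposal never records. Second, your defeating condition for TWO is aimed at the wrong family: you require $x\notin\overline{D}$ for every countable $D\subseteq A$, but TWO's sets $T_n$ are countable subsets of $X$, not of $A$, and $S_n\subseteq\overline{T_n}\cap A$ may be uncountable, so $\overline{\bigcup\{S_n:n\in\mathbb{N}\}}$ need not be the closure of any countable subset of $A$. This is precisely the hole the paper has to close by a separate argument special to its $A$, namely that every admissible move of TWO is countable; only after that reduction can ONE's coordinate-killing strategy even be formulated, and only for a set $A$ whose points have rigid (integer-valued, finite-support) structure. Without producing a concrete $A$ for which both this countability reduction and the final disjointness $\overline{\bigcup\{S_n:n\in\mathbb{N}\}}\cap\bigcap\{V_n:n\in\mathbb{N}\}=\emptyset$ can be proved, while the ambient space remains functionally generated by its countable subsets, the argument does not get off the ground.
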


\begin{proof} Let $X=\prod\{R_{\alpha }:\alpha \in \Lambda\}$ where
$R_{\alpha }=\mathbb{R}$ for all $\alpha\in \Lambda$ where $|\Lambda|$ is an uncountable Ulam non-measurable cardinal \cite{Ul}. Then $X$ is functionally generated by a family $\lambda_c$ of all of its countable subsets (see \cite{Us}).
Let us prove that $X$ is not an $AV_{\lambda_c}$-space.

Let us introduce some notation: let $y=(y_{\alpha })\in
\prod\{R_{\alpha }: \alpha \in \Lambda\}$, $supp(y)=\{\alpha : y_{\alpha
}\neq 0\}$; a point $y$ for which $supp(y)=\emptyset$ will be denoted by ${\bf 0}$;
if $M$ is a subset of $X$ then $supp(M)=\bigcup\{supp(y): y\in
M\}$.

 Let $n\in \mathbb{N}$ and  $A=\{y\in X: |supp(y)|\le n$ and $(\forall \alpha :
y_{\alpha }\neq 0\Leftrightarrow y_{\alpha }=n)\}$.

Let us prove the following fact: there is no such point $x\in
\overline{A}\setminus A$ that player TWO has a winning
strategy in the game $AV_{\lambda _c}(X)$.

(1) Let us check that $\overline{A}\setminus A=\{{\bf 0}\}$. Let $x\in
X\setminus A$ and $x\neq {\bf 0}$, then three cases are possible:

\medskip

(a) $\exists \alpha^{0}\in supp(x)$ such that $x_{\alpha^0}$ is not an integer;

(b) $\exists \alpha^{0}_1,\alpha ^{0}_2\in supp(x)$ such that
$x_{\alpha^0_1}=i\neq j=x_{\alpha^0_2}$ where $i,j\in \mathbb{N}$;

(c) $\exists \alpha^{0}_1,\ldots ,\alpha ^{0}_{n+1}\in
supp(x)$ such that $x_{\alpha^0_1}=\ldots =x_{\alpha^0_{n+1}}=n.$

In all three cases, the point $x$ has
a neighborhood $V$ such that $V\cap A=\emptyset$.

\medskip

(a) $ V=\{y\in X: |x_{\alpha^0 }-y_{\alpha^0}|<\epsilon \}$ where
$\epsilon=\min \{|x_{\alpha^0}-n|: n\in \mathbb{N}\}$;

(b) $V=\{y\in X: |x_{\alpha^0_1}-y_{\alpha^0_1}|<\frac{1}{2}$
and $|x_{\alpha^0_2}-y_{\alpha^0_2}|<\frac{1}{2}\}$;

(c) $V=\{y\in X: |y_{\alpha^0
_i}-x_{\alpha^0_i}|<\frac{1}{2}$ for every $i=1,\ldots,n+1\}$.

  Thus, we have proved
that $\overline{A}\setminus A=\{{\bf 0}\}$.

\medskip

(2) Prove that player TWO does not have a winning strategy in the game $AV_{\lambda _c}(X)$ on the set $A$ and at the point ${\bf 0}$. Moreover, we
 prove that player ONE does have a winning strategy.

Let us check that the choice of
player TWO in every round of the game can only be a countable set
$S_i$. Note that $S_i$ contains at most
a countable number of points having the same $supp$. On the other hand, if $y\in \overline{T_i}$, then there is $y'\in
\overline{T_i}$ such that $supp(y)=supp(y')$. Since $T_i\in
\lambda_c$ and, hence, it is countable, $S_i$ is countable, too.

Now let the set $S_1$ be chosen by player TWO  at the first step of the
game. Let $H_1=supp(S_1)$. We renumber the set
$H_1=\{\alpha^{1}_i: i\in \mathbb{N}\}$. Take a
neighborhood of the point ${\bf 0}$ chosen by the player ONE, the set
$V_1=\{x\in X: |x_{\alpha^{1}_1}|<\frac{1}{2}\}$. Let the game already
pass $m-1$ steps and the set $S_m\subseteq A$ is chosen by the player TWO
 at the $m$-th step of the Asanov-Velichko game, let
$H_m=supp(S_m)=\{\alpha ^{m}_i:i\in \mathbb{N}\}$.

Let $V_m=\{x\in X:\forall i,\forall j:i\le j\le m,
|x_{\alpha^{i}_{j}}|<\frac{1}{2}\}$. And so on. At the end of the game
we get:  for every $x\in \bigcap \{V_m: m\in \mathbb{N}\}$  $|x_{\alpha
}|<\frac{1}{2}$ for any $\alpha \in H=\bigcup \{H_m:m\in \mathbb{N}\}$. On the other hand, for every $x\in
\overline{\bigcup \{S_m: m\in \mathbb{N}\}}$ there is $\alpha \in
H$ such that $x_{\alpha }\ge 1$. So $\overline{\bigcup
\{S_m: m\in \mathbb{N}\}}\cap (\bigcap \{V_m: m\in
\mathbb{N}\})=\emptyset$.
\end{proof}

\begin{proposition} There is a Tychonoff $AV_{\lambda_c}$-space $X$ where $\lambda_c$ is a family of all of its
non-empty countable subsets, but $X$ is not functionally generated by $\lambda_c$.
\end{proposition}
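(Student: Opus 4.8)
The plan is to exhibit the ordinal space $X=[0,\omega_1]$ (the ordinals $\le\omega_1$ with the order topology) as the required example. It is compact Hausdorff, hence Tychonoff, and $\lambda_c$ is the family of all non-empty countable subsets of $X$. I will verify the two requirements separately: a single discontinuous function shows $X$ is not functionally generated, while a short case analysis shows that player TWO always has a winning strategy.

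To see that $X$ is not functionally generated by $\lambda_c$, I would use $f=\chi_{\{\omega_1\}}$, so that $f(\omega_1)=1$ and $f(\alpha)=0$ for $\alpha<\omega_1$. Every neighbourhood of $\omega_1$ contains a tail $(\gamma,\omega_1]$ on which $f$ vanishes except at $\omega_1$, so $f$ is discontinuous. I then check that $f\upharpoonright A$ extends continuously for every countable $A$: if $\omega_1\notin A$ take the zero function; if $\omega_1\in A$, then $A\cap[0,\omega_1)$ is a countable set of countable ordinals, hence bounded by some $\beta<\omega_1$ because $\omega_1$ is regular. Since $[0,\beta]=[0,\beta+1)$ is clopen, the function $\chi_{(\beta,\omega_1]}$ is continuous on $X$ and agrees with $f$ on $A$. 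Thus no member of $\lambda_c$ witnesses the discontinuity of $f$, and $X$ fails to be functionally generated.

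For the $AV_{\lambda_c}$ property I take an arbitrary non-closed $A\subseteq X$ and split on $\overline{A}\setminus A$. If some $x<\omega_1$ lies in $\overline{A}\setminus A$, then $x$ must be a nonzero limit ordinal, since an isolated point of $X$ lying in $\overline{A}$ would already lie in $A$; moreover $A$ is cofinal below $x$, and as $\mathrm{cf}(x)=\omega$ I fix a countable $C\subseteq A$ with $x\in\overline{C}$. Then TWO wins at $x$ simply by playing $S_n=C$, because $x\in\overline{\bigcup S_n}$ while $x\in\bigcap V_n$ holds automatically. The essential case is $\overline{A}\setminus A=\{\omega_1\}$: here $A$ is cofinal in $\omega_1$ yet $\omega_1$ lies in the closure of no countable subset. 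TWO's strategy is, after ONE plays $V_n$, to pick $\gamma_n<\omega_1$ with $(\gamma_n,\omega_1]\subseteq V_n$ and play $S_n=\{a_n\}$ for some $a_n\in A\cap(\gamma_n,\omega_1)$, which is non-empty by cofinality. I then claim the point $\sigma=\sup_n a_n$ is a winning witness: $\sigma<\omega_1$ by regularity of $\omega_1$, so $\sigma\in\overline{\{a_n:n\in\mathbb{N}\}}\subseteq\overline{\bigcup S_n}$, and for each $n$ one has $\sigma\ge a_n>\gamma_n$, whence $\sigma\in(\gamma_n,\omega_1]\subseteq V_n$ and therefore $\sigma\in\bigcap V_n$.

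The crux --- and the place where a hasty argument goes astray --- is exactly this final claim that TWO wins at $\omega_1$. It is tempting to think that ONE can defeat TWO by always choosing $\gamma_{n+1}$ above all of TWO's earlier points, thereby pushing $\overline{\bigcup S_n}$ strictly below $\bigcap V_n$. The subtle point I must handle is that $\bigcap_n(\gamma_n,\omega_1]$ still contains $\sup_n\gamma_n$ whenever the $\gamma_n$ increase to it, so the common supremum $\sigma$ falls into both sets at once. The entire argument rests on the strict inequality $a_n>\gamma_n$ together with the regularity of $\omega_1$, which keeps the countable supremum $\sigma$ inside $X$ and above every $\gamma_n$; I will state these two facts explicitly so as to close the off-by-one gap at the supremum.
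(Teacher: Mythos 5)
Your proposal is correct and uses exactly the paper's example: $X=T(\omega_1+1)=[0,\omega_1]$ with the characteristic function $f=\chi_{\{\omega_1\}}$ as the witness that $X$ is not functionally generated by $\lambda_c$. The only real divergence is how the $AV_{\lambda_c}$ property is established. The paper disposes of it in one line --- ``the space $X$ is compact and, hence, is an $AV_{\lambda_c}$-space'' --- implicitly invoking the general fact that a regular countably compact space is an $AV_\lambda$-space for any $\lambda$ containing the singletons: player TWO picks, inside regularity-shrunken neighbourhoods $\overline{W_n}\subseteq V_n$, a point of $A$, and any cluster point of these points lies in $\overline{\bigcup_n S_n}\cap\bigcap_n\overline{W_n}\subseteq\bigcap_n V_n$. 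You instead verify a winning strategy by hand from the ordinal structure: at a point $x<\omega_1$ of $\overline{A}\setminus A$ the point $x$ itself is the witness, since countable limit ordinals have cofinality $\omega$ and so $x$ lies in the closure of a countable subset of $A$; at $\omega_1$ your points $a_n>\gamma_n$ and the regularity of $\omega_1$ make $\sigma=\sup_n a_n$ a witness. Both arguments are sound; yours is self-contained and exposes exactly why the game is winnable, while the paper's one-liner is shorter and covers all compact Hausdorff spaces at once. One small point in your favour: functional generation is defined via non-extendability of $\pi_A(f)$ to a continuous function on all of $X$, and the paper only remarks that $f\upharpoonright S$ is continuous as a function on $S$, whereas you actually exhibit the continuous extension $\chi_{(\beta,\omega_1]}$, which is what the definition literally demands.
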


\begin{proof} Let $X=T(\omega_1+1)$ be the set of all
ordinals less than $\omega_1+1$ in the order topology (see
Example 3.1.27 in \cite{Eng}). The space $X$ is compact and, hence, is an $AV_{\lambda_c}$-space.

But $X$ is not functionally generated by the family $\lambda_c$, because the function

$$ f(\alpha)= \left\{
\begin{array}{lcr}
0, \ \ \ \ if \, \alpha <\omega_1, \\
1, \, \, \, \, if \, \, \, \, \alpha=\omega_1\\
\end{array}
\right.
$$

is discontinuous, but $f\upharpoonright S$ is continuous for any countable set $S$.
\end{proof}

To conclude this section, we present a theorem that reduces
the problem of compactness of subsets in $C_{\lambda,\mu}(X,Y)$ to the
case in $C_p(X,Y)$.

\medskip

A continuously invariant property ${\cal P}$ is of
{\it boundedness type} if and only if it implies boundedness, that is, if every
subspace $Y$ of $X$ with the property ${\cal P}$  is bounded in $X$ (Proposition 2.15 in \cite{Arch2}).

\begin{theorem}\label{2.11} Let $X$ be a Hausdorff topological
space, let $\lambda$ be a cover of $X$, and let $Y$ be a complete
Hausdorff uniform space with uniformity $\mu$.
Then, if ${\cal P}$ is a boundedness type property such that
any subset of $C_p(X,Y)$ that has this property
is relatively compact in $C_p(X,Y)$, then every subset of
$C_{\lambda ,\mu}(X,Y)$ with the property ${\cal P}$, is
relatively compact in $C_{\lambda,\mu}(X,Y)$.
\end{theorem}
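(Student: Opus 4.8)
The plan is to pass to the ambient uniform space $Y^X$ of all maps $X\to Y$ equipped with the uniformity $\nu$ of uniform convergence on $\lambda$, and to show that $F$ has compact $\nu$-closure there which already consists of continuous functions. First, by the standing assumptions on $\lambda$ (Remark \ref{2.1}), since $\lambda$ is a cover closed under subsets and finite unions, every finite subset of $X$ lies in $\lambda$; hence the topology of $C_{\lambda,\mu}(X,Y)$ refines the pointwise one and the identity map $j\colon C_{\lambda,\mu}(X,Y)\to C_p(X,Y)$ is continuous. Because ${\cal P}$ is continuously invariant, $j(F)=F$ has property ${\cal P}$ in $C_p(X,Y)$, and the hypothesis then forces $K:=\overline{F}^{\,C_p(X,Y)}$ to be compact and to consist of continuous functions. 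Because ${\cal P}$ is of boundedness type, $F$ is bounded in $C_{\lambda,\mu}(X,Y)$.

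The heart of the argument is that $F$ is totally bounded (precompact) in $(Y^X,\nu)$, and I would isolate the key fact as a lemma: in any Hausdorff uniform space, a bounded set is totally bounded. Indeed, if a set $G$ were not totally bounded there would be an entourage $M$ and an infinite $M$-separated sequence $(g_n)\subseteq G$; picking a symmetric $M'$ with $(M')^2\subseteq M$ makes the $M'$-balls about the $g_n$ pairwise disjoint, and from a uniformly continuous pseudometric subordinate to $M'$ one builds a continuous real-valued function equal to $n$ at $g_n$ and supported in these disjoint balls, which is unbounded on $G$; so $G$ is not bounded. To apply this, fix $A\in\lambda$ and take $M$ closed; the restriction map $\pi_A\colon C_{\lambda,\mu}(X,Y)\to C_\mu(\overline{A},Y)$ (well defined since $\overline{A}\in\lambda$) is uniformly continuous, and as boundedness is continuously invariant and passes to larger ambient spaces, $\pi_A(F)$ is bounded, hence totally bounded, in $C_\mu(\overline{A},Y)$. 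Pulling a finite $M$-small cover of $\pi_A(F)$ back through $\pi_A$ yields a finite $\langle A,M\rangle$-small cover of $F$; since the entourages $\langle A,M\rangle$ form a base of $\nu$, this shows $F$ is totally bounded in $(Y^X,\nu)$.

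Finally I would invoke completeness. Since $Y$ is complete and $\lambda$ covers $X$, a $\nu$-Cauchy net is pointwise Cauchy at every point of $X=\bigcup\lambda$, converges pointwise by completeness of $Y$, and the convergence is uniform on each $A\in\lambda$; thus $(Y^X,\nu)$ is complete, and the totally bounded set $F$ has compact $\nu$-closure $\overline{F}^{\,\nu}$. Because $\nu$ refines the product topology, $\overline{F}^{\,\nu}$ is contained in the closure of $F$ in the product topology on $Y^X$; the latter equals $K$ (a pointwise-compact, hence pointwise-closed, subset of $Y^X$) and so consists of continuous functions. Therefore $\overline{F}^{\,\nu}$ lies in $C(X,Y)$, it coincides with the closure of $F$ in $C_{\lambda,\mu}(X,Y)$, and it is compact; hence $F$ is relatively compact in $C_{\lambda,\mu}(X,Y)$. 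The main obstacle is the total-boundedness lemma of the second paragraph: checking that the bump-function construction runs against a general uniformity rather than a metric, and that boundedness genuinely transfers along $\pi_A$. The usual difficulty in such theorems, namely continuity of the limit functions, is here supplied for free by the hypothesis through the pointwise-compact set $K$.
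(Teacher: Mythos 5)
Your proof is correct, and its overall skeleton (choose a complete ambient uniform space, show that boundedness forces compact closure there, then use the $C_p$-hypothesis to see that the limit functions are continuous) matches the paper's; but the realization is genuinely different and more self-contained. The paper takes as ambient space the function space $C_{e^{-1}(\lambda),\mu}(X_{\tau\lambda},Y)$ over the $\lambda_f$-leader of $X$: it cites \cite{ANO} for the facts that $e^{\#}\colon C_{\lambda,\mu}(X,Y)\to C_{e^{-1}(\lambda),\mu}(X_{\tau\lambda},Y)$ is an embedding and that the latter space is a complete uniform space, hence a $\mu$-space, so the bounded set $e^{\#}(F)$ has compact closure there; the $C_p$-argument (compactness, hence closedness, of $\overline{F}^{C_p(X,Y)}$, plus continuity of the identity maps into the pointwise topologies) then shows this closure lies inside $e^{\#}(C_{\lambda,\mu}(X,Y))$, exactly parallel to your final paragraph. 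You avoid the leader and all external references by taking $(Y^X,\nu)$, which is elementarily complete, as the ambient space, and you replace the cited fact that complete uniform spaces are $\mu$-spaces by proving its underlying content from scratch: functionally bounded subsets of a uniform space are totally bounded. That lemma is correct and your bump-function sketch does go through: with a uniformly continuous pseudometric $d$ satisfying $\{d<1\}\subseteq M'$, the points $g_n$ are $1$-separated with respect to $d$, so near any point at most one of the sets $\{d(\cdot,g_n)<1/4\}$ can contribute to the sum $\sum_n n\max\bigl(0,1-4d(\cdot,g_n)\bigr)$, which is therefore $d$-continuous and hence continuous in the (finer) uniform topology; this is the one place where care beyond the metric case is genuinely needed, and you correctly flagged it. Two smaller remarks: your detour through the restriction maps $\pi_A$ is harmless but unnecessary, since the lemma applies directly to $F$ as a bounded subset of the uniform space $C_{\lambda,\mu}(X,Y)$ and total boundedness is intrinsic; and the trade-off between the two routes is clear --- yours is elementary and self-contained, while the paper's reuses machinery (the $\lambda_f$-leader and completeness of function spaces over it) that it needs anyway for Theorems \ref{2.7} and \ref{3.5}, which keeps this particular proof short at the cost of external dependencies.
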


\begin{proof} Let $F\subseteq C_{\lambda,\mu}(X,Y)$
have the property ${\cal P}$ and let $e:X_{\tau\lambda}\rightarrow X$
be a natural condensation of the $\lambda_f$-leader $X_{\tau\lambda}$
of $X$ onto  $X$. By Corollary 4.3 in \cite{ANO}, the map
$e^{\#}:C_{\lambda,\mu}(X,Y)\rightarrow
C_{e^{-1}(\lambda),\mu}(X_{\tau\lambda},Y)$ is an embedding, and the set
$e^{\#}(F)$ has the property ${\cal P}$ (moreover, $e^{\#}(F)$ is bounded).
Since $C_{e^{-1}(\lambda),\mu}(X_{\tau\lambda},Y)$ is a complete
uniform space (see Proposition 4.7 in \cite{ANO}) (hence, a $\mu$-space),  the closure of the set $e^{\#}(F)$ in the
space $C_{e^{-1}(\lambda),\mu}(X_{\tau\lambda},Y)$ is
compact.

If we now prove that $e^{\#}(\overline{F}^{C_{\lambda,\mu}(X,Y)})=
\overline{e^{\#}(F)}^{C_{e^{-1}(\lambda),\mu}(X_{\tau\lambda},Y)}$
then, due to the fact that the mapping $e^{\#}$ is an embedding, we get
that $\overline{F}^{C_{\lambda,\mu}(X,Y)}$ is compact.

Note that the set $F$ has the property ${\cal P}$ in
space $C_p(X,Y)$. Since the identity mapping
$id_{C(X,Y)}:C_{\lambda,\mu}(X,Y)\rightarrow C_p(X,Y)$ is continuous, the space $C_p(X,Y)$ has a weaker topology than the topology of
$C_{\lambda,\mu}(X,Y)$ and, by the hypotheses of the theorem, $F$ is relatively compact in $C_p(X,Y)$. Since $\overline{F}^{C_p(X,Y)}$ is compact, the set
$e^{\#}(\overline{F}^{C_p(X,Y)})$ is compact in $C_p(X_{\tau\lambda},Y)$. On the other hand,  since the
mapping $e^{\#}:C_p(X,Y)\rightarrow C_p(X_{\tau\lambda},Y)$
is continuous (see \cite{Arch3}, Corollary 2.8), $e^{\#}(\overline{F}^{C_p(X,Y)})=
\overline{e^{\#}(F)}^{C_p(X_{\tau\lambda},Y)}$.

Since the identity mapping
$id_{C(X_{\tau\lambda},Y)}:C_{e^{-1}(\lambda),\mu}(X_{\tau\lambda
},Y)\rightarrow C_p(X_{\tau\lambda},Y)$ is continuous (see \cite{Arch3}),
$\overline{e^{\#}(F)}^{C_{e^{-1}(\lambda),\mu}(X_{\tau\lambda},Y)}\subseteq
\overline{e^{\#}(F)}^{C_p(X_{\tau\lambda},Y)}$, i.e.
$\overline{e^{\#}(F)}^{C_{e^{-1}(\lambda),\mu}(X_{\tau\lambda},Y)}\subseteq
e^{\#}(C_{\lambda,\mu}(X,Y))$.
\end{proof}

Theorem \ref{2.11} implies the following corollaries.

\begin{corollary} Let $X$ be a Tychonoff space and let $\lambda$ be a cover of $X$. If $X$ is an $AV_{\gamma_c}$-space or functionally generated by a family $\gamma_c$ of all of its subspaces that have a dense
$\sigma$-countably pracompact subset, then each pseudocompact subset in $C_{\lambda ,\rho}(X)$ is relatively compact.
\end{corollary}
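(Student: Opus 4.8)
The plan is to observe that this corollary is a direct application of the reduction Theorem \ref{2.11}, fed with the already-established pointwise-convergence results: Theorem \ref{1.10} in the $AV_{\gamma_c}$ case and Theorem \ref{3.9} in the functionally generated case. The role of the complete uniform target $Y$ in Theorem \ref{2.11} will be played by $\mathbb{R}$ with its usual metric uniformity $\rho$, which is a complete Hausdorff uniform space; under this identification $C_{\lambda,\mu}(X,Y)$ becomes $C_{\lambda,\rho}(X)$ and $C_p(X,Y)$ becomes $C_p(X)$, so the hypotheses of Theorem \ref{2.11} translate exactly into the setting of the corollary.

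First I would verify that the property ${\cal P}$ of being pseudocompact is a continuously invariant property of boundedness type. It is continuously invariant since a continuous image of a pseudocompact space is pseudocompact, and it is of boundedness type because every pseudocompact subspace $P$ of the Tychonoff space $X$ is bounded: the restriction to $P$ of any continuous real-valued function on $X$ is continuous on $P$, hence bounded. Thus ${\cal P}$ satisfies the standing requirement in Theorem \ref{2.11} that it imply boundedness.

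Next I would supply the $C_p$-level input demanded by Theorem \ref{2.11}, namely that every subset of $C_p(X)$ with property ${\cal P}$ be relatively compact in $C_p(X)$. When $X$ is an $AV_{\gamma_c}$-space this is precisely Theorem \ref{1.10}, since $\gamma_c$ is there exactly the family of all subspaces with a dense $\sigma$-countably pracompact subspace. When $X$ is functionally generated by $\gamma_c$, the corresponding statement is Theorem \ref{3.9}, whose conclusion that the $C_p(X)$-closure of every pseudocompact subspace is compact is just relative compactness. The one point requiring care here is that Theorem \ref{3.9} is phrased for the family of all \emph{closed} subspaces admitting a dense $\sigma$-countably pracompact subset, whereas the corollary allows \emph{all} such subspaces. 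These coincide for functional generation: if $A$ carries a dense $\sigma$-countably pracompact subset $D$, then $D$ remains dense in $\overline{A}$ and $\sigma$-countable pracompactness is intrinsic to $D$, so $\overline{A}$ is a closed subspace of the same type; moreover any continuous extension of $f\upharpoonright\overline{A}$ would restrict to a continuous extension of $f\upharpoonright A$, so non-extendability on $A$ forces non-extendability on $\overline{A}$. Hence functional generation by $\gamma_c$ entails functional generation by its closed members, and Theorem \ref{3.9} applies.

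Finally I would invoke Theorem \ref{2.11} directly: $X$ is Tychonoff and therefore Hausdorff, $\lambda$ is a cover of $X$ by hypothesis, and $(\mathbb{R},\rho)$ is a complete Hausdorff uniform space, so every subset of $C_{\lambda,\rho}(X)$ with property ${\cal P}$ — that is, every pseudocompact subset — is relatively compact in $C_{\lambda,\rho}(X)$, as required. I expect the only genuinely non-routine step to be the closed-versus-arbitrary family-matching argument in the functionally generated case; the rest is a verification that pseudocompactness is of boundedness type together with a clean substitution into the hypotheses of Theorem \ref{2.11}.
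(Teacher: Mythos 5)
Your proposal is correct and takes essentially the same route as the paper: the paper's own proof is precisely the citation of Theorem \ref{2.11} together with Theorem \ref{1.10} (for the $AV_{\gamma_c}$ case) and Theorem 8.1 in \cite{Arch2}, i.e.\ Theorem \ref{3.9} (for the functionally generated case). The additional details you supply --- that pseudocompactness is a continuously invariant property of boundedness type, and that functional generation by all of $\gamma_c$ entails functional generation by its closed members so that Theorem \ref{3.9} indeed applies --- are exactly the routine verifications the paper leaves implicit, and your treatment of them is sound.
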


\begin{proof} It suffices to refer to Theorems \ref{2.11} and \ref{1.10}
of this paper and Theorem 8.1 in \cite{Arch2}.
\end{proof}

\begin{corollary}\label{2.13} Let $X$ be a Tychonoff space and let $\lambda$ be a cover of $X$. If $X$ is an $AV_{\gamma_p}$-space or functionally generated by a family $\gamma_p$ of all of its subspaces that have a dense
$\sigma$-pseudocompact subset, then each countably pracompact subset in $C_{\lambda ,\rho}(X)$ is relatively compact.
\end{corollary}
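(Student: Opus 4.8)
The plan is to mirror the proof of the preceding corollary: reduce the assertion about $C_{\lambda,\rho}(X)$ to the corresponding assertion about $C_p(X)$ by means of Theorem \ref{2.11}, and then invoke the appropriate pointwise result according to which of the two hypotheses on $X$ is assumed. Throughout I take $Y=\mathbb{R}$ with its usual metric uniformity, which is complete and Hausdorff, so that the conditions imposed on $Y$ in Theorem \ref{2.11} are met, while $\lambda$ is by assumption a cover of the Tychonoff (hence Hausdorff) space $X$.

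First I would verify that the property ${\cal P}$ of being countably pracompact is a continuously invariant property of boundedness type, so that Theorem \ref{2.11} is applicable with this ${\cal P}$. Continuous invariance is routine: if $X$ has a dense subspace $D$ that is countably compact in $X$ and $f\colon X\to Z$ is continuous, then $f(D)$ is dense in $f(X)$, and lifting an infinite subset of $f(D)$ through $f$ to an infinite subset of $D$ and pushing its limit point forward shows that $f(D)$ is countably compact in $f(X)$; hence $f(X)$ is countably pracompact. For the boundedness type I would check that every countably pracompact space is pseudocompact, and therefore bounded: a real-valued continuous function is bounded on the dense countably compact subspace $D$ (an unbounded sequence in $D$ would have a limit point in $X$, contradicting continuity), and consequently bounded on $X=\overline{D}$.

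With ${\cal P}$ identified as a boundedness type property, Theorem \ref{2.11} reduces the claim to showing that every subset of $C_p(X)$ with property ${\cal P}$ is relatively compact in $C_p(X)$, and this is exactly what the two hypotheses on $X$ supply. If $X$ is an $AV_{\gamma_p}$-space, where $\gamma_p$ is the family of all subspaces carrying a dense $\sigma$-pseudocompact subset, then Theorem \ref{1.11} gives the relative compactness of every countably pracompact subspace of $C_p(X)$. If instead $X$ is functionally generated by $\gamma_p$, then Theorem \ref{3.10} (Theorem 8.3 in \cite{Arch2}) yields the same conclusion. In either case the hypothesis of Theorem \ref{2.11} is satisfied, and we conclude that every countably pracompact subset of $C_{\lambda,\rho}(X)$ is relatively compact.

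The step I expect to require the most care is matching the functional-generation hypothesis to Theorem \ref{3.10}: the latter is stated for the family of \emph{closed} subspaces with a dense $\sigma$-pseudocompact subset, whereas the corollary speaks of all such subspaces. I would reconcile this by noting that the closure of a subspace with a dense $\sigma$-pseudocompact subset again has that subset as a dense $\sigma$-pseudocompact subset, so that passing to closures (condition (3) of Remark \ref{2.1}) does not alter the relevant family for the purpose of functional generation. The $AV_{\gamma_p}$ branch needs no such adjustment, since Theorem \ref{1.11} is phrased directly for the family of all subspaces of the required kind.
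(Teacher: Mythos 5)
Your proposal is correct and takes essentially the same route as the paper: the paper's proof is a one-line appeal to Theorem \ref{2.11}, Theorem \ref{1.11}, and Theorem 8.3 of \cite{Arch2} (Theorem \ref{3.10}), which is precisely the reduction you carry out. The details you supply --- that countable pracompactness is a continuously invariant property of boundedness type, and that passing to closures reconciles the ``all subspaces'' hypothesis with the ``closed subspaces'' formulation of Theorem \ref{3.10} --- are exactly what the paper leaves implicit.
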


\begin{proof} It suffices to refer to Theorems \ref{2.11} and \ref{1.11}
of this paper and Theorem 8.3 in \cite{Arch2}.
\end{proof}

\begin{corollary}\label{2.14} Let $X$ be a Tychonoff space and let $\lambda$ be a cover of $X$. If $X$ is an $AV_{\gamma}$-space or functionally generated by a family $\gamma$ of all of its countable subsets, then the space $C_{\lambda,\rho}(X)$ is a
$\mu$-space.
\end{corollary}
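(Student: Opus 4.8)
The plan is to reduce the assertion to the corresponding statement for $C_p(X)$ and then transport it to $C_{\lambda,\rho}(X)$ by means of Theorem \ref{2.11}. Throughout, I take ${\cal P}$ to be the property of being bounded in the ambient space: a subset $A$ of a space $W$ has ${\cal P}$ when every continuous real-valued function on $W$ is bounded on $A$. This ${\cal P}$ is continuously invariant, since if $g\colon W\to W'$ and $h\colon W'\to\mathbb{R}$ are continuous then $h\circ g$ is bounded on $A$, whence $h$ is bounded on $g(A)$; and it is plainly of boundedness type. Hence ${\cal P}$ is an admissible input for Theorem \ref{2.11} with $Y=\mathbb{R}$ and the uniformity $\mu$ induced by $\rho$.

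The core step is to show that $C_p(X)$ is a $\mu$-space, i.e. that every bounded subset of $C_p(X)$ is relatively compact. Here I would use the auxiliary cover $\lambda_0$ of $X$ consisting of all finite subsets. Since the topology of uniform convergence on finite sets is the topology of pointwise convergence, we have $C_{\lambda_0,\rho}(X)=C_p(X)$, and since a countable union of finite sets is countable we have $\sigma\lambda_0=\gamma$. Thus in the first alternative the hypothesis that $X$ is an $AV_\gamma$-space is exactly the hypothesis $AV_{\sigma\lambda_0}$ required by Theorem \ref{2.7}, which then yields at once that $C_{\lambda_0,\rho}(X)=C_p(X)$ is a $\mu$-space. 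In the second alternative, when $X$ is functionally generated by $\gamma$, I would invoke the remark following Theorem \ref{2.7} (that functional generation by a family may replace the corresponding $AV$-hypothesis); equivalently, one applies Proposition 4.10 of \cite{arch} with ${\cal P}$ taken to be boundedness, whose condition $(\alpha)$ holds because for a countable $Z$ the projection $\pi_Z(C_p(X))$ is a subspace of the metrizable space $\mathbb{R}^Z$, and metrizable spaces are $\mu$-spaces, so the closure of every bounded subset of $\pi_Z(C_p(X))$ is compact. In either alternative, $C_p(X)$ is a $\mu$-space.

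It then remains to apply Theorem \ref{2.11}. The space $X$ is Tychonoff, hence Hausdorff; $\lambda$ is the given cover; $Y=\mathbb{R}$ is a complete Hausdorff uniform space; and ${\cal P}$, being boundedness, is a boundedness-type property for which, by the core step, every subset of $C_p(X)=C_p(X,\mathbb{R})$ with property ${\cal P}$ is relatively compact. Theorem \ref{2.11} then gives that every subset of $C_{\lambda,\rho}(X)$ with property ${\cal P}$, that is, every bounded subset, is relatively compact in $C_{\lambda,\rho}(X)$. This is precisely the statement that $C_{\lambda,\rho}(X)$ is a $\mu$-space.

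I expect the main obstacle to be the bookkeeping in the second paragraph: choosing the finite-subset cover $\lambda_0$ so that simultaneously $\sigma\lambda_0=\gamma$ and $C_{\lambda_0,\rho}(X)=C_p(X)$, which is what lets Theorem \ref{2.7} apply verbatim, and, in the functionally generated alternative, confirming that boundedness is genuinely continuously invariant and that the required projection condition is satisfied because $\pi_Z(C_p(X))$ is metrizable for countable $Z$. Once $C_p(X)$ is known to be a $\mu$-space, the passage to the arbitrary cover $\lambda$ is a direct application of Theorem \ref{2.11} and needs no further work.
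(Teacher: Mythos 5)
Your proposal is correct, and its skeleton is the same as the paper's: take ${\cal P}$ to be boundedness, establish that every bounded subset of $C_p(X)$ is relatively compact, and then transfer this to $C_{\lambda,\rho}(X)$ via Theorem \ref{2.11}. The difference lies in how you obtain the $C_p$-level input. The paper's proof is purely by citation: besides Theorem \ref{2.11} it invokes Theorem 1 of \cite{AsVel} for the $AV_{\gamma}$ alternative and Theorem 2.16 of \cite{Arch2} for the functionally generated alternative. You instead derive the $AV_{\gamma}$ case from the paper's own Theorem \ref{2.7} applied to the cover $\lambda_0$ of all finite subsets, using $\sigma\lambda_0=\gamma$ and $C_{\lambda_0,\rho}(X)=C_p(X)$; this is a genuinely useful observation, since Theorem \ref{2.7} is proved for arbitrary $AV_{\sigma\lambda}$-spaces, whereas the Asanov--Velichko result concerns the weakly $p$- and weakly $q$-spaces that the $AV_{\gamma}$ notion generalizes, so your route covers the whole class named in the corollary with no daylight between the cited theorem and the hypothesis. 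For the functionally generated case you verify condition $(\alpha)$ of Arkhangel'skii's Proposition 4.10 (quoted in the paper) directly: for countable $Z$ the projection $\pi_Z(C_p(X))$ embeds in the metrizable space $\mathbb{R}^Z$, and metrizable spaces are $\mu$-spaces, so bounded subsets of $\pi_Z(C_p(X))$ have compact closure there; this reproves the content of the theorem of \cite{Arch2} that the paper cites. In short, both arguments have the same decomposition; the paper's version is shorter because it outsources both inputs, while yours is self-contained within the paper and, in the $AV_{\gamma}$ case, rests on a statement whose proved generality exactly matches what the corollary asserts.
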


\begin{proof} It suffices to refer to Theorem \ref{2.11} of this paper,
Theorem 1 in \cite{AsVel} and Theorem 2.16 in \cite{Arch2}.
\end{proof}

To indicate the span of the classes of spaces that cover these corollaries, we indicate that Corollary \ref{2.14} applies to

\medskip

(1) $q$-spaces in the sense E. Michael \cite{Mic}, in particular, countably
compact spaces;

\medskip

(2) quasi-$k$--spaces in the sense of Ju. Nagata \cite{Nag}, in particular,
$k$-spaces;

\medskip
(3) spaces of countable tightness, in particular, sequential and
Frechet--Urysohn spaces;

\medskip

(4) locally separable spaces;

\medskip

(5) spaces of countable functional tightness (see \cite{arch} and \cite{AsVel}).

\medskip

Corollary \ref{2.13} also applies to $\sigma$-pseudocompact
spaces.

\section{Grothendieck's theorem for $C_\lambda(X)$}

The purpose of this section is to prove a theorem
analogous to Theorem \ref{2.11}. For this we need two lemmas.

\begin{lemma} Let $X=\lim\limits_\leftarrow \{X_{\alpha
},\pi^{\alpha1}_{\alpha 2},A\}$ be the inverse limit of the system $\{X_{\alpha
},\pi^{\alpha1}_{\alpha 2},A\}$ of Tychonoff spaces $X_{\alpha }$, and for every $\alpha$ let any subset of  $X_{\alpha }$ with the
boundedness property of type ${\cal P}$ be relatively compact. Then any
$B\subseteq X$ with the property ${\cal P}$ is relatively compact.
\end{lemma}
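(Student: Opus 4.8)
The plan is to realize $X$ as a closed subspace of the Tychonoff product $P=\prod\{X_{\alpha}:\alpha\in A\}$ and then to control $B$ coordinate by coordinate, so that compactness of $\overline{B}$ reduces to Tychonoff's theorem. First I would recall the standard fact that, since each $X_{\alpha}$ is Tychonoff and hence Hausdorff, the inverse limit
\[
X=\{x\in P:\pi^{\alpha1}_{\alpha 2}(x_{\alpha_1})=x_{\alpha_2}\ \text{whenever}\ \alpha_2\le\alpha_1\}
\]
is a \emph{closed} subset of $P$: each defining condition is the equalizer of two continuous maps into the Hausdorff space $X_{\alpha_2}$, hence closed, and $X$ is the intersection of these closed sets. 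Here $\pi_{\alpha}\colon X\to X_{\alpha}$ denotes the limit projection, i.e.\ the restriction to $X$ of the coordinate projection $P\to X_{\alpha}$.

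Next I would fix $\alpha$ and show that $\pi_{\alpha}(B)$ is relatively compact in $X_{\alpha}$. Since $\pi_{\alpha}$ is continuous and $\mathcal P$ is continuously invariant, $\pi_{\alpha}(B)$ has the property $\mathcal P$ in $\pi_{\alpha}(X)$. As the relevant boundedness-type properties (pseudocompactness, countable compactness, and the like) are topological properties of the subspace $\pi_{\alpha}(B)$ itself — and the subspace topology on $\pi_{\alpha}(B)$ induced from $\pi_{\alpha}(X)$ coincides with that induced from $X_{\alpha}$ — the set $\pi_{\alpha}(B)$ carries $\mathcal P$ as a subset of $X_{\alpha}$ as well. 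The hypothesis then applies and gives that $K_{\alpha}:=\overline{\pi_{\alpha}(B)}^{X_{\alpha}}$ is compact.

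With the compacta $K_{\alpha}$ in hand I would conclude by the product argument. For $x\in B$ we have $x_{\alpha}=\pi_{\alpha}(x)\in\pi_{\alpha}(B)\subseteq K_{\alpha}$ for every $\alpha$, so $B\subseteq\prod\{K_{\alpha}:\alpha\in A\}$. The latter set is closed in $P$ and compact by Tychonoff's theorem, whence $\overline{B}^{P}\subseteq\prod\{K_{\alpha}:\alpha\in A\}$; being closed in a compact space, $\overline{B}^{P}$ is compact. Finally, since $X$ is closed in $P$ and $B\subseteq X$, we get $\overline{B}^{P}\subseteq X$, so $\overline{B}^{X}=\overline{B}^{P}$ is compact, i.e.\ $B$ is relatively compact in $X$.

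The only genuinely delicate point is the second step: continuous invariance only delivers $\mathcal P$ for $\pi_{\alpha}(B)$ \emph{inside the image} $\pi_{\alpha}(X)$, whereas the hypothesis is stated for subsets of $X_{\alpha}$. The argument therefore rests on $\mathcal P$ being an absolute boundedness-type property, so that $\pi_{\alpha}(B)$ carries it as a subset of $X_{\alpha}$; equivalently, once $\overline{\pi_{\alpha}(B)}^{\pi_{\alpha}(X)}$ is compact it is $X_{\alpha}$-closed and hence equal to $K_{\alpha}$, so the distinction between the two ambient spaces is immaterial. Everything else — closedness of the inverse limit in the product and the passage to the product compactum — is routine.
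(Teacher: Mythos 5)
Your proof is correct and takes essentially the same route as the paper: project $B$ to each coordinate, use continuous invariance of ${\cal P}$ together with the hypothesis to obtain the compacta $K_{\alpha}=\overline{\pi_{\alpha}(B)}^{X_{\alpha}}$, and conclude that $\overline{B}^{X}$ is compact as a closed subset of a compact set. The paper compresses the final step into the assertion that $\overline{B}^{X}$ coincides with the inverse limit of the system $\{\overline{\pi_{\alpha}(B)}^{X_{\alpha}},\pi^{\alpha 1}_{\alpha 2},A\}$; your version merely unfolds this via Tychonoff's theorem and closedness of the inverse limit in the product, and is in fact more explicit about why ${\cal P}$ transfers from $\pi_{\alpha}(X)$ to $X_{\alpha}$, a point the paper passes over silently.
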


\begin{proof} Let $B\subseteq X$ have the property
${\cal P}$, then for every $\alpha \in A$, $\pi_{\alpha }(B)$
has the property ${\cal P}$ and, therefore,
$\overline{\pi_{\alpha}(B)}^{X_{\alpha}}$ is compact. Obviously $\overline{B}^{X}=\lim \limits_{\leftarrow}\{\overline{\pi_{\alpha}(B)}^{X_{\alpha}},\pi^{\alpha 1}_{\alpha 2},A\}$ is compact because it is a closed subset of a compact set.
\end{proof}

The following result was proved by D. Preiss and P. Simon.

\begin{proposition}(Theorem 5 in  \cite{PS})\label{3.2}
Let $X$ be an Eberlein compactum, and let $x$ be a non-isolated point of $X$. Then there exists a sequence $\{U_n: n\in \mathbb{N}\}$ of open sets in $X$ which converges to $x$.
\end{proposition}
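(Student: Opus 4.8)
The plan is to reduce to the standard coordinate representation of an Eberlein compactum and then to upgrade a convergent sequence of points at $x$ into a convergent sequence of open sets. By the Amir--Lindenstrauss theorem, $X$ embeds homeomorphically into $c_0(\Gamma)=\{f\in \mathbb{R}^\Gamma:\ \{\gamma:|f(\gamma)|\ge\epsilon\}\ \text{is finite for every}\ \epsilon>0\}$ with the topology of pointwise convergence; since this topology is translation invariant and $c_0(\Gamma)$ is a linear subspace, I may assume $x=\mathbf 0\in X\subseteq c_0(\Gamma)$ with all values in $[-1,1]$. First I would record two structural facts about Eberlein compacta that I am allowed to use: they are Fr\'echet--Urysohn, so from the hypothesis that $\mathbf 0$ is non-isolated there is a sequence $(f_j)$ in $X\setminus\{\mathbf 0\}$ with $f_j\to\mathbf 0$; and for each $k\in\mathbb{N}$ the family $\mathcal A_k=\{A_{\gamma,k}:\gamma\in\Gamma\}$, where $A_{\gamma,k}=\{f\in X:|f(\gamma)|>1/k\}$, consists of open sets, is point-finite (each $f\in c_0(\Gamma)$ exceeds $1/k$ in modulus at only finitely many coordinates), avoids $\mathbf 0$, and together the $\mathcal A_k$ cover $X\setminus\{\mathbf 0\}$ and separate its points.

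The key reduction I would make is a reformulation of convergence. A sequence $(U_n)$ of nonempty open sets converges to $\mathbf 0$ (every neighborhood of $\mathbf 0$ contains all but finitely many $U_n$) if and only if $\mathbf 0\in\overline{\bigcup_{m\ge n}U_m}$ for every $n$ and the family $\{U_n\}$ is locally finite at every point $y\ne\mathbf 0$, i.e. each such $y$ has a neighborhood meeting only finitely many $U_n$. The nontrivial implication uses compactness: a basic neighborhood of $\mathbf 0$ has closed complement $B$ with $\mathbf 0\notin B$, and covering the compactum $B$ by finitely many of the witnessing neighborhoods, each meeting only finitely many $U_n$, forces all but finitely many $U_n$ to miss $B$, i.e. to lie inside the given neighborhood. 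Thus it suffices to construct nonempty open sets $U_n$ that cluster only at $\mathbf 0$ and are locally finite away from $\mathbf 0$.

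I would build the $U_n$ by a fusion. At stage $n$, having fixed distinct coordinates $\gamma_1,\dots,\gamma_{n-1}$ and thresholds $k_1,\dots,k_{n-1}$, I use that $\mathbf 0$ is non-isolated to choose a witness $f_{j_n}$ from the convergent sequence lying in the shrinking neighborhood $W_n=\{f:|f(\gamma_i)|<1/k_i,\ i<n\}$, select a coordinate $\gamma_n$ (distinct from the previous ones) at which $f_{j_n}$ is bounded below, say $|f_{j_n}(\gamma_n)|>2/k_n$, and set $U_n=A_{\gamma_n,k_n}\cap W_n$; then $f_{j_n}\in U_n$, so $U_n\ne\emptyset$, it is open, and $\mathbf 0\notin U_n$ while $f_{j_n}\to\mathbf 0$ delivers clustering at $\mathbf 0$. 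The distinctness of the $\gamma_n$ together with the point-finiteness of each $\mathcal A_k$ is what I would exploit to secure local finiteness of the selected family off $\mathbf 0$: for fixed $y\ne\mathbf 0$ and fixed $k$, only finitely many coordinates carry $|y(\gamma)|>1/k$, so a neighborhood of $y$ constraining those coordinates should meet only the correspondingly finitely many $U_n$.

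The step I expect to be the main obstacle is precisely this last verification: point-finiteness of the global coordinate family does not by itself give local finiteness of $\{U_n\}$, since constraining a neighborhood of $y$ controls only finitely many coordinates while infinitely many $\gamma_n$ remain free. Overcoming it requires the fusion to be arranged so that the selected sets are not merely point-finite but genuinely spread out, clustering only at $\mathbf 0$, which is where the Eberlein (rather than merely Fr\'echet--Urysohn) structure is essential, for a space can be Fr\'echet--Urysohn without admitting convergent sequences of open sets. In the write-up I would make the inductive bookkeeping carry, alongside each $\gamma_n$, a separating open set witnessing that the tail $\{U_m:m>n\}$ retreats into ever smaller coordinate neighborhoods, so that any $y\ne\mathbf 0$ is separated from all but finitely many $U_n$; this is the route of Preiss and Simon, and if a self-contained argument is not wanted one may instead simply invoke their theorem that every Eberlein compactum has this property.
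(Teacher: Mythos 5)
First, a point of comparison: the paper contains no proof of this proposition at all --- it is imported verbatim as Theorem 5 of Preiss--Simon \cite{PS} --- so the citation you offer in your closing sentence is exactly the paper's own treatment of the statement. Judged as a proof, however, your sketch has a genuine gap, and it is precisely the one you flag yourself. Your sets $U_n=A_{\gamma_n,k_n}\cap W_n$ are constrained only on the finitely many coordinates $\gamma_1,\dots,\gamma_n$ selected so far, so nothing prevents every $U_n$ from containing points that are far from $\mathbf 0$ on coordinates the recursion never touches. Concretely, let $\Gamma=\mathbb{N}\cup\{\ast\}$ and
\[
X=\{\mathbf 0,\;e_\ast\}\cup\bigl\{\tfrac{1}{n}e_n:n\in\mathbb{N}\bigr\}\cup\bigl\{e_\ast+\tfrac{1}{n}e_n:n\in\mathbb{N}\bigr\}\subseteq c_0(\Gamma),
\]
a metrizable (hence Eberlein) compactum in which $\mathbf 0$ is non-isolated, the witnessing sequence being $f_n=\frac{1}{n}e_n\to\mathbf 0$. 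Run your recursion on this sequence: the only admissible choice of coordinate at stage $n$ is $\gamma_n=j_n$, the unique support point of the chosen witness $f_{j_n}$, and then the point $e_\ast+\frac{1}{j_n}e_{j_n}$ lies in $U_n$, since it agrees with $f_{j_n}$ on every coordinate of $\mathbb{N}$ and $U_n$ imposes no constraint on the coordinate $\ast$. Hence no $U_n$ is contained in the neighborhood $\{f\in X:|f(\ast)|<\frac{1}{2}\}$ of $\mathbf 0$, so $(U_n)$ does not converge to $\mathbf 0$ --- even though in this particular $X$ the conclusion of the theorem is trivial, as the open singletons $\{f_n\}$ converge to $\mathbf 0$. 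The construction, as written, therefore does not prove the theorem.

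The failure is structural rather than an artifact of the example: convergence of $(U_n)$ to $\mathbf 0$ must survive testing against neighborhoods constraining arbitrary finite subsets of $\Gamma$, whereas a fusion of the kind you describe controls only the countably many coordinates it itself selects; and, as you correctly observe, point-finiteness of the families $\mathcal{A}_k$ yields point-finiteness, not local finiteness, of $\{U_n\}$. Your proposed remedy --- bookkeeping that guarantees the tail $\{U_m:m>n\}$ ``retreats into ever smaller coordinate neighborhoods'' --- is a restatement of the conclusion to be proved, not a construction; supplying it is the actual content of Preiss and Simon's argument, which requires substantially more than the devices assembled here. What is sound in your note is the reduction of convergence to local finiteness off $\mathbf 0$ (the compactness argument is correct), the $c_0(\Gamma)$ representation, and the appeal to the Fr\'echet--Urysohn property of Eberlein compacta; but the heart of the theorem is missing, and the only complete route you offer --- invoking \cite{PS} --- coincides with what the paper does.
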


\begin{lemma}\label{3.3} Let $X$ be a regular space. Let $f:X\rightarrow Y$
be a condensation and let the closure of any pseudocompact $A\subseteq Y$ be an
Eberlein compactum, then the closure of any pseudocompact
$B\subseteq X$ is an Eberlein compactum.
\end{lemma}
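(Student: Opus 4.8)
The plan is to transfer Eberlein compactness across the condensation $f$ by showing that $f$ restricts to a homeomorphism on the closure of $B$. Write $D=\overline{B}^{X}$ and $K=\overline{f(B)}^{Y}$. First I would observe that $f(B)$, being a continuous image of the pseudocompact set $B$, is pseudocompact in $Y$, so by hypothesis $K$ is an Eberlein compactum, and $f(B)$ is dense in $K$. Since $B$ is dense in $D$ and pseudocompactness (feeble compactness) is inherited by any space in which a feebly compact subspace is dense, $D$ is pseudocompact as well, and $g:=f\upharpoonright D\colon D\to K$ is a continuous injection with dense image. The aim is then to prove that $g$ is a homeomorphism of $D$ onto $K$; this immediately gives that $D$ is compact and homeomorphic to the Eberlein compactum $K$, and since a closed (compact) subspace of an Eberlein compactum is again Eberlein, it follows that $\overline{B}^{X}=D$ is an Eberlein compactum.

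Next I would prove that $g$ is onto $K$. If $y\in K$ is isolated, then $\{y\}$ is open and meets the dense set $f(B)$, so $y\in f(B)\subseteq g(D)$. If $y$ is non-isolated, Proposition \ref{3.2} supplies open sets $U_n$ in $K$ with $U_n\to y$; each $U_n$ meets $f(B)$, so choosing $b_n\in B$ with $f(b_n)\in U_n$ yields a sequence of nonempty open subsets $O_n=B\cap f^{-1}(U_n)$ of $B$. Using that $B$ is feebly compact --- every sequence of nonempty open sets has a cluster point --- I obtain a point $q\in B$ such that every neighbourhood of $q$ meets infinitely many $O_n$. A short computation then shows that every neighbourhood of $f(q)$ meets every neighbourhood of $y$ (because $U_n\to y$), and as $Y$ is Hausdorff this forces $f(q)=y$. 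Thus $y\in g(D)$, and $g$ is a continuous bijection of $D$ onto $K$.

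Finally I would show that $g^{-1}$ is continuous. Suppose it fails at $y=g(x)$: there is an open set $O\ni x$ in $D$ with $y\in\overline{A}^{K}$, where $A=g(D\setminus O)$ and $y\notin A$. Then $\overline{A}^{K}$ is a closed, hence Eberlein, subspace in which $y$ is non-isolated, and applying the Preiss--Simon convergence inside $\overline{A}^{K}$ produces open sets that meet $A$ and converge to $y$. Choosing, by regularity of $D$, an open $O'$ with $x\in O'\subseteq\overline{O'}\subseteq O$, I would pull these open sets back to nonempty open sets $G_n\subseteq D\setminus\overline{O'}$ with $g(G_n)\to y$; feeble compactness of $D$ then yields a cluster point $p$ of $\{G_n\}$ with $p\notin O'$ and $g(p)=y=g(x)$, whence $p=x\in O'$ by injectivity --- a contradiction. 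Therefore $g$ is a homeomorphism, $D$ is compact and homeomorphic to $K$, and so $\overline{B}^{X}$ is an Eberlein compactum.

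The main obstacle is this last step. Pseudocompactness does \emph{not} make sequences of points cluster (only sequences of nonempty open sets do), so I cannot simply lift a convergent sequence $a_n\in A$ to a clustering sequence in $D\setminus O$. The delicate point is therefore to manufacture open sets in $K$ that simultaneously meet the accumulating set $A$ and genuinely converge to $y$, so that their $g$-preimages converge and the open-set form of feeble compactness of $D$ delivers a cluster point mapping \emph{exactly} to $y$ while staying off the neighbourhood $O'$ of $x$. This is precisely where the full strength of the Preiss--Simon property of Eberlein compacta --- convergence of open sets taken relative to a set accumulating at the point, rather than the bare statement of Proposition \ref{3.2} --- is needed.
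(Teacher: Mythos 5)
Your first two steps are sound: $f(B)$ is pseudocompact with Eberlein closure $K$, $D=\overline{B}^{X}$ is pseudocompact, and your surjectivity argument is correct --- indeed it amounts to a proof of the fact (Theorem IV.5.4 in \cite{arch}, which the paper simply cites) that a pseudocompact subspace of $Y$ that is dense in an Eberlein compactum must equal that compactum. The genuine gap is exactly where you flag it: continuity of $g^{-1}$. Proposition \ref{3.2}, applied to the Eberlein compactum $\overline{A}^{K}$, yields sets $W_n$ that are open only \emph{in} $\overline{A}^{K}$ and converge to $y$. Their preimages $g^{-1}(W_n)$ are relatively open in the closed subspace $g^{-1}(\overline{A}^{K})$ of $D$, not open in $D$; and a closed subspace of a pseudocompact space need not be feebly compact (think of the maximal almost disjoint family in a Mr\'{o}wka space), so no cluster point is available there. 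If instead you write $W_n=V_n\cap\overline{A}^{K}$ with $V_n$ open in $K$ and cluster the nonempty open sets $g^{-1}(V_n)\cap(D\setminus\overline{O'})$ in $D$, the cluster point's image is only a cluster point of the sequence $(V_n)$, and the $V_n$ need not converge to $y$, being unconstrained off $\overline{A}^{K}$. What your argument actually requires is: for every $A\subseteq K$ and $y\in\overline{A}^{K}\setminus A$ there is a sequence of sets open \emph{in $K$}, each meeting $A$, converging to $y$. That is strictly stronger than Proposition \ref{3.2} combined with ``closed subspaces of Eberlein compacta are Eberlein,'' and you invoke it without proof; so the proof as written is incomplete.

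The paper sidesteps this trap by proving that $f\upharpoonright B$ is a \emph{closed} map instead of verifying continuity of the inverse pointwise. The mechanism is: by the hypothesis together with the cited theorem of Arhangel'skii, \emph{every} pseudocompact subset of $Y$ is closed in $Y$ (its closure is Eberlein, and it is dense and pseudocompact in that closure --- precisely your surjectivity argument). By regularity of $X$, any closed $C\subseteq B$ can be written as $\bigcap\{\overline{U}^{B}:U\in\gamma\}$ with each $U$ open in $B$; each $\overline{U}^{B}$ is pseudocompact (closures of \emph{open} sets inherit feeble compactness, unlike arbitrary closed sets --- this is the crucial point your route misses), so each $f(\overline{U}^{B})$ is pseudocompact, hence closed in $Y$, and injectivity of $f$ gives $f(C)=\bigcap\{f(\overline{U}^{B}):U\in\gamma\}$, a closed set. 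Thus $f\upharpoonright B$ is a continuous closed bijection onto the Eberlein compactum $f(B)$, hence a homeomorphism; so $B$ itself is compact, therefore closed in $X$, and $\overline{B}^{X}=B$ is Eberlein. If you want to salvage your write-up, replace your last paragraph by this closed-map argument: your surjectivity step already contains all the analysis needed, applied to the sets $f(\overline{U}^{B})$ rather than to $f(B)$ alone, and it only ever asks for cluster points of open sets inside a space that is genuinely feebly compact.
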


\begin{proof} Let $B$ be a pseudocompact subset of $X$. Since
$f(B)$ is pseudocompact,  $A=\overline{f(B)}$ is an Eberlein compactum. Then $f(B)$ is itself an Eberlein compactum, and is, moreover, closed in $Y$ (see Theorem IV.5.4 in \cite{arch}).

Let us now prove that $f|_{B}$ is a closed mapping (and,
hence, a homeomorphism). Let $C$ be an arbitrary closed
subset of $B$. Since $X$ is regular, there is a family
$\gamma$ of open sets in $B$ such that
$C=\bigcap\{\overline{U}^{B}: U\in \gamma\}$. Since $\overline{U}^B$ is pseudocompact,
the set $f(\overline{U}^B)$ is pseudocompact \cite{Col}, and, hence, $f(\overline{U}^B)$ is
closed in $Y$. Therefore $f(C)=f(\bigcap\{\overline{U}^B:U\in
\gamma\})=\bigcap\{f(\overline{U}^B):U\in \gamma\}$
is a closed set in $Y$. Thus, we have proved that
$f(B)$ is an Eberlein compactum and $f|_{B}$ is a homeomorphism.
Hence, $B$ is closed and $B$ is an Eberlein compactum.
\end{proof}

\begin{corollary} If a Tychonoff space $X$ contains a
dense $\sigma$-countably pracompact subspace, and $\lambda$ is
a family of subsets of $X$ containing all finite subsets of
$X$, then every pseudocompact subset of $C_{\lambda}(X)$ is an Eberlein compactum.
\end{corollary}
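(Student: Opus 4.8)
The plan is to combine the set-open topology comparison for $C_\lambda(X)$ with the pointwise result of Theorem~\ref{1.8}, using Lemma~\ref{3.3} as the bridge between the two topologies. The statement asserts that if $X$ contains a dense $\sigma$-countably pracompact subspace and $\lambda \supseteq [X]^{<\omega}$, then every pseudocompact subset of $C_\lambda(X)$ has Eberlein-compact closure. Since $\lambda$ contains all finite subsets of $X$, the identity map $C_\lambda(X) \to C_p(X)$ is continuous, so the $\lambda$-open topology refines the topology of pointwise convergence; I would record this as the map $j = \mathrm{id}_{C(X)} \colon C_\lambda(X) \to C_p(X)$, which is a continuous bijection, i.e.\ a condensation.

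First I would apply Theorem~\ref{1.8}: because $X$ is Tychonoff and contains a dense $\sigma$-countably pracompact subspace, every pseudocompact subspace of $C_p(X)$ is an Eberlein compactum. In particular the closure in $C_p(X)$ of any pseudocompact subset is an Eberlein compactum (a pseudocompact set inside such a space is relatively compact with Eberlein-compact closure, since in $C_p(X)$ a pseudocompact subset of an Eberlein compactum is closed by the Preiss--Simon type structure recorded in Theorem~IV.5.4 of \cite{arch}). This gives exactly the hypothesis needed on the target side of the condensation $j$: the closure of any pseudocompact subset of the image $C_p(X)$ is an Eberlein compactum.

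Next I would invoke Lemma~\ref{3.3} with $f = j$, $X$ replaced by the space $C_\lambda(X)$, and $Y$ replaced by $C_p(X)$. Lemma~\ref{3.3} requires the domain to be regular and the map to be a condensation onto a space in which pseudocompact subsets have Eberlein-compact closures. Since $C_\lambda(X)$ is a topological group (hence regular, indeed Tychonoff, under the standing assumptions) and $j$ is a condensation, the lemma applies verbatim and yields that the closure in $C_\lambda(X)$ of every pseudocompact $B \subseteq C_\lambda(X)$ is an Eberlein compactum. That is precisely the conclusion.

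The step I expect to be the main obstacle is verifying that the hypotheses of Lemma~\ref{3.3} are genuinely met: specifically, checking that $C_\lambda(X)$ is regular (so that the closed-set-as-intersection-of-closed-neighborhoods argument inside Lemma~\ref{3.3} is available) and confirming that the set-open topology on $C_\lambda(X)$ is finer than the pointwise topology, which is exactly what the assumption that $\lambda$ contains all finite subsets of $X$ is there to guarantee. Both are standard, so the corollary reduces to a clean concatenation: Theorem~\ref{1.8} supplies the Eberlein structure in $C_p(X)$, and Lemma~\ref{3.3} transports that structure backward along the condensation $j$ to $C_\lambda(X)$.
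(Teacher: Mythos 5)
Your overall route is exactly the one the paper intends: the corollary is placed immediately after Lemma~\ref{3.3} and is meant to follow by concatenating the identity condensation $j\colon C_{\lambda}(X)\to C_p(X)$ (continuous because $\lambda$ contains all finite sets), Theorem~\ref{1.8} on the $C_p(X)$ side, and Lemma~\ref{3.3} to pull the Eberlein structure back. Your treatment of the $C_p$ side is correct: closures of pseudocompact sets in the Tychonoff space $C_p(X)$ are pseudocompact, so Theorem~\ref{1.8} makes them Eberlein compacta, which is the hypothesis Lemma~\ref{3.3} needs on the target.

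The gap sits precisely at the step you yourself flagged as the main obstacle, and your resolution of it is false: $C_{\lambda}(X)$ with the set-open topology is \emph{not} in general a topological group, and under the hypotheses of this corollary it need not even be regular. For set-open topologies, being a topological group (equivalently, a topological vector space) essentially forces the elements of $\lambda$ to be $C$-compact --- a theorem of the second author of this very paper --- and no such restriction is assumed here; for arbitrary $\lambda$ the topology is not even translation-homogeneous. Concretely, take $X=[0,1]$ (compact, hence its own dense $\sigma$-countably pracompact subspace) and $\lambda=\{(0,1)\}\cup\{F: F\subseteq X,\ F\ \mbox{finite}\}$, which satisfies every hypothesis of the corollary. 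Then $C_{\lambda}(X)$ is Hausdorff but not regular at $f_0=\mathrm{id}$: given any basic neighborhood $W=\bigcap_k[(0,1),V^k]\cap\bigcap_{i=1}^{n}[F_i,V_i]$ of $f_0$, pick $x^*\in(0,1)\setminus\bigcup_i F_i$ and let $g$ agree with $f_0$ outside a small interval around $x^*$ disjoint from $\bigcup_i F_i$, with $g(x^*)=1$; then $g\notin[(0,1),(0,1)]$, yet $g\in\overline{W}$, since for any basic neighborhood $N$ of $g$ the truncation $h=\min(g,1-\eta)$ lies in $W\cap N$ once $\eta>0$ is chosen below finitely many thresholds determined by the finite sets occurring in $W$ and $N$. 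So no neighborhood of $f_0$ has closure inside the subbasic neighborhood $[(0,1),(0,1)]$, and the regularity hypothesis of Lemma~\ref{3.3} cannot be secured the way you do. To be fair, the paper states this corollary without proof and never addresses the point either; a genuine repair would have to exploit that the proof of Lemma~\ref{3.3} only uses regularity of the pseudocompact subspace $B$ itself, or run the Preiss--Simon argument of Proposition~\ref{3.2} directly on the condensation $B\to j(B)$, but some argument beyond ``$C_\lambda(X)$ is a topological group'' is unavoidable, because that statement is simply untrue in this generality.
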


\begin{theorem}\label{3.5} Let $X$ be a Tychonoff space. Let $\lambda$
be a family containing all non-empty finite subsets of
$X$, such that it is closed under finite intersections and unions.
Let ${\cal P}$ be a boundedness type property implying
pseudocompactness such that for each $A\in \lambda$ any
subset of the space $C_p(\overline{A})$ with
the property ${\cal P}$, is an Eberlein compactum.
Then any subset of $C_{\lambda}(X)$ with the property ${\cal P}$ is relatively compact provided that any subset of $C_p(X)$ with the property ${\cal P}$ is
relatively compact.
\end{theorem}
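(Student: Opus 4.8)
The plan is to realize $C_\lambda(X)$ as (a subspace of) an inverse limit indexed by the directed family $\lambda$, and then invoke the inverse-limit lemma (Lemma, the one preceding Proposition \ref{3.2}) together with the single-element compactness hypothesis on each $C_p(\overline{A})$. Concretely, for each $A\in\lambda$ consider the restriction map $\pi_A:C_\lambda(X)\to C_p(\overline{A})$ given by $\pi_A(f)=f\!\upharpoonright\!\overline{A}$; since $\lambda$ is closed under finite unions and intersections it is directed by inclusion, and for $A\subseteq A'$ one has the natural bonding map $\pi^{A'}_A:C_p(\overline{A'})\to C_p(\overline{A})$ of restriction, so that $\{C_p(\overline{A}),\pi^{A'}_A,\lambda\}$ is an inverse system. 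First I would check that $C_\lambda(X)$ embeds into this inverse limit: the set-open topology on $C_\lambda(X)$ is by definition generated by the subbasic sets $[A,V]$ with $A\in\lambda$, and each such set is exactly the $\pi_A$-preimage of the subbasic set $\{g\in C(\overline{A}):g(A)\subseteq V\}$ of $C_p(\overline{A})$ (here using that $A\subseteq\overline{A}$ and that evaluation on the points of $A$ controls membership in $[A,V]$). This shows the diagonal map $f\mapsto (f\!\upharpoonright\!\overline{A})_{A\in\lambda}$ is a homeomorphic embedding onto its image in $\varprojlim C_p(\overline{A})$.

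Next I would feed the hypotheses into the inverse-limit lemma. By assumption, for every $A\in\lambda$ any subset of $C_p(\overline{A})$ with property $\mathcal{P}$ is an Eberlein compactum, hence compact; this is precisely the coordinatewise relative-compactness premise the lemma requires. Because $\mathcal{P}$ is a continuously invariant boundedness-type property and each $\pi_A$ is continuous, the image $\pi_A(F)$ inherits $\mathcal{P}$ for any $F\subseteq C_\lambda(X)$ having $\mathcal{P}$; so the lemma applies to the system and yields that $F$, viewed inside the inverse limit, is relatively compact there. The remaining point is to transfer this back to $C_\lambda(X)$ itself, i.e.\ to show that the closure of $F$ taken in the inverse limit actually lands inside the embedded copy of $C_\lambda(X)$.

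That transfer is where I expect the real work to be, and it is the step that uses the separate hypothesis that every subset of $C_p(X)$ with property $\mathcal{P}$ is relatively compact. The idea parallels the proof of Theorem \ref{2.11}: one first uses the $C_p(X)$-hypothesis to conclude $\overline{F}^{C_p(X)}$ is compact, so that the closure of $F$ as a family of functions consists of genuine (possibly only pointwise-continuous) limit functions on $X$; one then must verify that each such limit function is in fact an element of $C_\lambda(X)$, i.e.\ continuous on $X$ in the ambient topology, rather than merely compatible coordinatewise on each $\overline{A}$. Since $\lambda$ contains all finite subsets, the inverse limit already sees all pointwise data, so an element of $\varprojlim C_p(\overline{A})$ is a function on $X=\bigcup\lambda$ that is continuous on each $\overline{A}$; the Eberlein-compactum conclusion on each coordinate, combined with Proposition \ref{3.2} (existence of a sequence of open sets converging to each non-isolated point of an Eberlein compactum), is what promotes this piecewise continuity to the kind of control needed to rule out a discontinuous limit. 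The hard part will be assembling these local sequential witnesses across the directed family to force the limit function to be continuous on all of $X$ and thus to lie in the embedded image of $C_\lambda(X)$; once that is established, relative compactness of $F$ in $C_\lambda(X)$ follows because the embedding is closed onto this image, completing the argument.
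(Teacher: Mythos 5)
Your plan fails at its very first step, and the failure is fatal to the whole strategy: the diagonal map $f\mapsto (f\upharpoonright\overline{A})_{A\in\lambda}$ into $\varprojlim C_p(\overline{A})$ is \emph{not} a homeomorphic embedding of $C_{\lambda}(X)$. The set $\{g\in C(\overline{A}):g(A)\subseteq V\}$ is not a subbasic (nor even an open) set of $C_p(\overline{A})$ when $A$ is infinite, since pointwise-basic neighborhoods constrain a function only at finitely many points; so although $[A,V]$ is set-theoretically the $\pi_A$-preimage of that set, it is not the preimage of an open set. The initial topology your family $\{\pi_A\}$ induces on $C(X)$ is exactly the topology of pointwise convergence on $\bigcup\lambda=X$, i.e.\ the topology of $C_p(X)$, which is strictly coarser than the $\lambda$-open topology in general. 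Consequently, even if every subsequent step of your argument were carried out, the inverse-limit lemma would only yield relative compactness of $F$ in the pointwise topology --- which is the \emph{hypothesis} of the theorem, not its conclusion --- and compactness never passes from a coarser topology to a finer one for free. The paper's proof avoids this by taking coordinates carrying \emph{set-open} topologies, $C_{\lambda_{\overline{A}}}(\overline{A})$, and by passing to the $\lambda_f$-leader: by Theorems 5.1 and 5.2 of \cite{ANO}, the map $e^{\#}:C_{\lambda}(X)\rightarrow C_{e^{-1}(\lambda)}(X_{\tau\lambda})$ is an embedding and it is the latter space (not $C_{\lambda}(X)$ itself) that is homeomorphic to the inverse limit of this system.

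Second, you misplace the role of the Eberlein hypothesis and explicitly defer the step you call ``the hard part,'' which is never actually carried out. In the paper that hypothesis is consumed by Lemma \ref{3.3}: since $C_{\lambda_{\overline{A}}}(\overline{A})$ condenses onto $C_p(\overline{A})$ and ${\cal P}$ implies pseudocompactness, Lemma \ref{3.3} (whose proof rests on the fact that a pseudocompact subspace of an Eberlein compactum is closed, a consequence of the Preiss--Simon result, Proposition \ref{3.2}) lifts compactness backwards across the condensation, giving that the closure of each projection $\pi_{\overline{A}}(F_1)$ in the set-open coordinate $C_{\lambda_{\overline{A}}}(\overline{A})$ is compact; this coordinatewise compactness is what makes $F_1$, the closure of $e^{\#}(F)$ in $C_{e^{-1}(\lambda)}(X_{\tau\lambda})$, compact as a closed subset of an inverse limit of compacta. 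There is no argument ``assembling local sequential witnesses to force continuity of the limit function''; instead, the $C_p(X)$-hypothesis is used to show $\overline{e^{\#}(F)}^{C_p(X_{\tau\lambda})}=e^{\#}(\overline{F}^{C_p(X)})$, whence $F_1\subseteq e^{\#}(C_{\lambda}(X))$, and since $e^{\#}$ is an embedding for the set-open topologies, $F_1$ is homeomorphic to a compact subset of $C_{\lambda}(X)$ containing $F$. Without the leader, without set-open coordinates, and without Lemma \ref{3.3}, your proposal cannot close; repaired along these lines it becomes exactly the paper's proof.
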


\begin{proof} Let $e:X_{\tau\lambda}\rightarrow X$
be a natural condensation of a $\lambda_f$-leader $X_{\tau\lambda}$ of
$X$ onto $X$. By Theorem 5.1 in \cite{ANO}, the map
$e^{\#}:C_{\lambda}(X)\rightarrow
C_{e^{-1}(\lambda)}(X_{\tau\lambda})$ is an embedding.

Let $F\subseteq C_{\lambda}(X)$ have the property ${\cal P}$.
Denote by $F_1$ the closure of the set $e^{\#}(F)$ in the
space $C_{e^{-1}(\lambda)}(X_{\tau\lambda})$. Let us prove that
$F_1$ is compact.

By Theorem 5.2 in \cite{ANO}, $C_{e^{-1}(\lambda)}(X_{\tau\lambda})$ is homeomorphic to the inverse limit of the system $S_{*}(X,\lambda, \mathbb{R})$, and  the set $\pi_{\overline{A}}(F_1)$ has the property ${\cal P}$ in
$C_{\lambda_{\overline{A}}}(\overline{A})$ for every $A\in \lambda$. On the other hand,
$C_{\lambda_{\overline{A}}}(\overline{A})$ condenses onto $C_p(\overline{A})$. Therefore, by Lemma \ref{3.3}, the closure of the set $\pi_{\overline{A}}(F_1)$ into
$C_{\lambda_{\overline{A}}}(\overline{A})$ is compact.
Hence, $F_1$ is compact as a closed subset of the inverse limit of the system consisting of compact sets.

Note that the set $F$ has the property ${\cal P}$ in
space $C_p(X)$ (which has a weaker topology than the topology in
$C_{\lambda}(X)$), and, by assumption, is relatively compact.
Therefore, $e^{\#}(\overline{F}^{C_p(X)})$ is compact and
closed in $C_p(X_{\tau\lambda})$. On the other hand,
$e^{\#}(\overline{F}^{C_p(X)})\subseteq
\overline{e^{\#}(F)}^{C_p(X_{\tau\lambda})}$ due to continuity
$e^{\#}$ (\cite{Arch3}, Corollary 2.8.). But then
$e^{\#}(\overline{F}^{C_p(X)})=\overline{e^{\#}(F)}^{C_p(X_{\tau\lambda
})}$.

Since $F_1\subseteq \overline{e^{\#}(F)}^{C_p(X_{\tau\lambda
})}=e^{\#}(\overline{F}^{C_p(X)})$,  $F_1\subseteq
e^{\#}(C_{\lambda }(X))$ and, hence, $F_1$ is homeomorphic to
$\overline{F}^{C_{\lambda}(X)}$ which implies that $F$ is
relatively compact in $C_{\lambda}(X)$.
\end{proof}

\begin{corollary} Let $X$ be a Tychonoff space and let $\lambda$ be a cover of $X$ containing all finite subsets of $X$. If $X$ is an $AV_{\gamma}$-space or is functionally generated by a family $\gamma$ of all of its subspaces that have a dense
$\sigma$-countably pracompact subset  and $\lambda\subseteq\gamma$, then each pseudocompact subset in $C_{\lambda}(X)$ is relatively compact.
\end{corollary}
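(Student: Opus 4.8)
The plan is to derive the statement directly from Theorem \ref{3.5}, taking for ${\cal P}$ the property ``to be pseudocompact''. First I would check that this ${\cal P}$ is admissible for Theorem \ref{3.5}: it is continuously invariant (a continuous image of a pseudocompact set is pseudocompact), and it implies boundedness, since any continuous real function on $X$ restricts to a bounded function on a pseudocompact subset; hence ${\cal P}$ is of boundedness type, and it trivially implies pseudocompactness. Thus both blanket requirements on ${\cal P}$ in Theorem \ref{3.5} hold.

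Next I would verify the structural hypothesis on $\lambda$ together with the local Eberlein condition. By the standing conventions on the defining family (Remark \ref{2.1}, conditions (1) and (2)), $\lambda$ is hereditary and closed under finite unions; heredity gives $A_1\cap A_2\subseteq A_1\in\lambda$, so $\lambda$ is also closed under finite intersections, and by hypothesis it contains all finite sets, exactly as Theorem \ref{3.5} requires. For the local condition, fix $A\in\lambda$. Since $A\in\lambda\subseteq\gamma$, the set $A$ contains a dense $\sigma$-countably pracompact subspace $D$; as $D$ stays dense in $\overline{A}$ and $\sigma$-countable pracompactness is an intrinsic property of $D$, the closure $\overline{A}$ also contains a dense $\sigma$-countably pracompact subspace. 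Theorem \ref{1.8} then shows that every pseudocompact subset of $C_p(\overline{A})$ is an Eberlein compactum, which is precisely the local hypothesis of Theorem \ref{3.5}.

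It remains to supply the ``provided that'' clause, namely that every pseudocompact subset of $C_p(X)$ is relatively compact, and here I would split into the two cases of the hypothesis. If $X$ is an $AV_{\gamma}$-space, this is exactly Theorem \ref{1.10} (with $\lambda_c=\gamma$). If instead $X$ is functionally generated by $\gamma$, I would first reduce to closed witnesses: whenever a discontinuous $f$ is witnessed by some $A\in\gamma$ (so $\pi_A(f)$ has no continuous extension to $X$), the same $f$ is witnessed by the closed set $\overline{A}$, because any continuous extension of $\pi_{\overline{A}}(f)$ would restrict to one of $\pi_A(f)$; since $\overline{A}$ is closed and lies in $\gamma$, the space $X$ is functionally generated by the subfamily of closed members of $\gamma$, and Theorem \ref{3.9} yields that the closure in $C_p(X)$ of every pseudocompact subset is compact. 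In either case the ``provided that'' hypothesis of Theorem \ref{3.5} is met, and Theorem \ref{3.5} then gives that every subset of $C_{\lambda}(X)$ with property ${\cal P}$, i.e. every pseudocompact subset, is relatively compact, as claimed.

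I expect the main obstacle to lie in the two ``closure'' transfers of the middle step: showing that $\overline{A}$ inherits a dense $\sigma$-countably pracompact subspace from $A$, so that the local Eberlein hypothesis of Theorem \ref{3.5} genuinely applies to $C_p(\overline{A})$, and, in the functionally generated case, passing from an arbitrary witnessing set $A$ to its closure $\overline{A}$ in order to invoke the closed-subspace formulation of Theorem \ref{3.9}. Both transfers are conceptually routine but must be stated with care, since $\gamma$ is defined through \emph{all} subspaces while Theorem \ref{3.9} is phrased for the closed ones.
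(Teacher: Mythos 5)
Your proof is correct and follows essentially the same route as the paper, which simply cites Theorem \ref{3.5} together with Theorem \ref{1.10} (for the $AV_\gamma$ case) and Arkhangel'skii's theorem (for the functionally generated case). The only difference is that you carefully spell out the verifications the paper leaves implicit — that pseudocompactness is an admissible property ${\cal P}$, that the local Eberlein hypothesis holds via Theorem \ref{1.8} after passing from $A$ to $\overline{A}$, and that functional generation transfers to the family of closed witnesses needed for Theorem \ref{3.9} — all of which are handled correctly.
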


\begin{proof} It suffices to refer to Theorems \ref{3.5} and \ref{1.10}
of this paper and Theorem 1.1 in \cite{Arch2}.
\end{proof}

\begin{corollary}\label{2.13} Let $X$ be a Tychonoff space and let $\lambda$ be a cover of $X$ containing all finite subsets of $X$. If $X$ is an $AV_{\gamma}$-space or functionally generated by a family $\gamma$ of all of its subspaces that have a dense
$\sigma$-pseudocompact subset, then each countably pracompact subset in $C_{\lambda}(X)$ is relatively compact.
\end{corollary}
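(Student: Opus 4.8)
The plan is to obtain the corollary as a direct instance of the reduction theorem (Theorem \ref{3.5}), taking for ${\cal P}$ the property ``countably pracompact'', in exact parallel with the preceding corollary (which handled ``pseudocompact'' through Theorems \ref{3.5} and \ref{1.10}). First I would confirm that countably pracompactness is an admissible choice for ${\cal P}$: it is continuously invariant, and it implies pseudocompactness, since a countably pracompact subspace carries a dense subspace that is countably compact in it and is hence itself pseudocompact. Consequently it is of boundedness type, because pseudocompactness implies boundedness (Proposition 2.15 in \cite{Arch2}). The structural demands that Theorem \ref{3.5} places on $\lambda$, namely containing all non-empty finite subsets and being closed under finite unions and intersections, are furnished by the hypotheses together with the standing conventions on $\lambda$ recorded in Remark \ref{2.1}.

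Next I would supply Theorem \ref{3.5} with its two remaining inputs. The global input, that every subset of $C_p(X)$ with property ${\cal P}$ is relatively compact, is precisely Theorem \ref{1.11} when $X$ is an $AV_{\gamma}$-space, and is Theorem 8.3 in \cite{Arch2} (our Theorem \ref{3.10}) when $X$ is functionally generated by $\gamma$. The local input, that for each $A\in\lambda$ every countably pracompact subset of $C_p(\overline{A})$ is an Eberlein compactum, I would obtain from the Arhangel'skii-type Eberlein theorem for $\sigma$-pseudocompact factors in \cite{Arch2} (the countably-pracompact companion of Theorem \ref{1.9}), applied to the closed subspace $\overline{A}$. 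Once both inputs are in place, Theorem \ref{3.5} yields that every subset of $C_{\lambda}(X)$ with property ${\cal P}$, in particular every countably pracompact subset, is relatively compact.

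The step I expect to be the main obstacle is the local Eberlein input. It requires that each closure $\overline{A}$, for $A\in\lambda$, again carry a dense $\sigma$-pseudocompact subspace, that is, that $\lambda$ refine $\gamma$ up to closures; this is automatic in the functionally-generated formulation, where $\gamma$ consists precisely of such subspaces, but in the $AV_{\gamma}$ formulation one must check that the defining property of $\gamma$ descends to the relevant closures so that the Eberlein theorem may be invoked on each $C_p(\overline{A})$. A secondary delicate point is that the Eberlein statements quoted in the introduction (Theorems \ref{1.8} and \ref{1.9}) are phrased for pseudocompact and for countably compact subspaces, so one must appeal to the genuinely countably-pracompact version from \cite{Arch2}; modulo this matching of properties, the argument is a routine transcription of the proof of the preceding corollary.
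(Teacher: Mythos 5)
Your proposal follows exactly the paper's route: the paper's own proof is a one-line citation of Theorem \ref{3.5}, Theorem \ref{1.11} (for the $AV_{\gamma}$ case), and Theorem 8.3 in \cite{Arch2} (for the functionally generated case), which is precisely the decomposition you spell out, with countable pracompactness as the boundedness-type property ${\cal P}$. The obstacle you flag --- that the local Eberlein input of Theorem \ref{3.5} requires each $\overline{A}$, $A\in\lambda$, to carry a dense $\sigma$-pseudocompact subspace, i.e.\ a hypothesis like $\lambda\subseteq\gamma$, which appears in the preceding corollary but is omitted from this one --- is a genuine issue that the paper's citation-style proof glosses over, so your more careful matching of hypotheses is, if anything, an improvement on the paper's argument rather than a deviation from it.
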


\begin{proof} It suffices to refer to Theorems \ref{3.5} and \ref{1.11}
of this paper and Theorem 8.3 in \cite{Arch2}.
\end{proof}

%{\bf Acknowledgements.} The authors would like to thank the
%referee for careful reading and valuable comments.

\bibliographystyle{model1a-num-names}
\bibliography{<your-bib-database>}

\end{document}